\begin{document}

\theoremstyle{plain}

\newtheorem{thm}{Theorem}[section]
\newtheorem{lem}[thm]{Lemma}
\newtheorem{conj}[thm]{Conjecture}
\newtheorem{pro}[thm]{Proposition}
\newtheorem{cor}[thm]{Corollary}
\newtheorem{que}[thm]{Question}
\newtheorem{rem}[thm]{Remark}
\newtheorem{defi}[thm]{Definition}

\newtheorem*{thmA}{THEOREM A}
\newtheorem*{thmB}{THEOREM B}
\newtheorem*{corC}{COROLLARY C}
\newtheorem*{conjC}{CONJECTURE C}
\newtheorem*{conjD}{CONJECTURE D}
\newtheorem*{thmE}{THEOREM E}

\newtheorem*{thmAcl}{Main Theorem$^{*}$}
\newtheorem*{thmBcl}{Theorem B$^{*}$}

\numberwithin{equation}{section}

\newcommand{\Maxn}{\operatorname{Max_{\textbf{N}}}}
\newcommand{\Syl}{\operatorname{Syl}}
\newcommand{\dl}{\operatorname{\mathfrak{d}}}
\newcommand{\Con}{\operatorname{Con}}
\newcommand{\cl}{\operatorname{cl}}
\newcommand{\Stab}{\operatorname{Stab}}
\newcommand{\Aut}{\operatorname{Aut}}
\newcommand{\Ker}{\operatorname{Ker}}
\newcommand{\IBr}{\operatorname{IBr}}
\newcommand{\Irr}{\operatorname{Irr}}
\newcommand{\SL}{\operatorname{SL}}
\newcommand{\FF}{\mathbb{F}}
\newcommand{\NN}{\mathbb{N}}
\newcommand{\N}{\mathbf{N}}
\newcommand{\C}{\mathbf{C}}
\newcommand{\OO}{\mathbf{O}}
\newcommand{\F}{\mathbf{F}}

\renewcommand{\labelenumi}{\upshape (\roman{enumi})}

\newcommand{\GL}{\operatorname{GL}}
\newcommand{\Sp}{\operatorname{Sp}}
\newcommand{\PGL}{\operatorname{PGL}}
\newcommand{\PSL}{\operatorname{PSL}}
\newcommand{\SU}{\operatorname{SU}}
\newcommand{\PSU}{\operatorname{PSU}}
\newcommand{\PSp}{\operatorname{PSp}}

\providecommand{\V}{\mathrm{V}}
\providecommand{\E}{\mathrm{E}}
\providecommand{\ir}{\mathrm{Irr_{rv}}}
\providecommand{\Irrr}{\mathrm{Irr_{rv}}}
\providecommand{\re}{\mathrm{Re}}

\def\irrp#1{{\rm Irr}_{p'}(#1)}

\def\Z{{\mathbb Z}}
\def\C{{\mathbb C}}
\def\Q{{\mathbb Q}}
\def\irr#1{{\rm Irr}(#1)}
\def\ibr#1{{\rm IBr}(#1)}
\def\irrv#1{{\rm Irr}_{\rm rv}(#1)}

\def \c#1{{\cal #1}}
\def\cent#1#2{{\bf C}_{#1}(#2)}
\def\syl#1#2{{\rm Syl}_#1(#2)}
\def\nor{\triangleleft\,}
\def\oh#1#2{{\bf O}_{#1}(#2)}
\def\Oh#1#2{{\bf O}^{#1}(#2)}
\def\zent#1{{\bf Z}(#1)}
\def\det#1{{\rm det}(#1)}
\def\ker#1{{\rm ker}(#1)}
\def\norm#1#2{{\bf N}_{#1}(#2)}
\def\alt#1{{\rm Alt}(#1)}
\def\iitem#1{\goodbreak\par\noindent{\bf #1}}
   \def \mod#1{\, {\rm mod} \, #1 \, }
\def\sbs{\subseteq}

\def\gc{{\bf GC}}
\def\ngc{{non-{\bf GC}}}
\def\ngcs{{non-{\bf GC}$^*$}}
\newcommand{\notd}{{\!\not{|}}}
\newcommand{\Out}{{\mathrm {Out}}}
\newcommand{\Mult}{{\mathrm {Mult}}}
\newcommand{\Inn}{{\mathrm {Inn}}}
\newcommand{\IBR}{{\mathrm {IBr}}}
\newcommand{\IBRL}{{\mathrm {IBr}}_{\ell}}
\newcommand{\IBRP}{{\mathrm {IBr}}_{p}}
\newcommand{\ord}{{\mathrm {ord}}}
\def\id{\mathop{\mathrm{ id}}\nolimits}
\renewcommand{\Im}{{\mathrm {Im}}}
\newcommand{\Ind}{{\mathrm {Ind}}}
\newcommand{\diag}{{\mathrm {diag}}}
\newcommand{\soc}{{\mathrm {soc}}}
\newcommand{\End}{{\mathrm {End}}}
\newcommand{\sol}{{\mathrm {sol}}}
\newcommand{\Hom}{{\mathrm {Hom}}}
\newcommand{\Mor}{{\mathrm {Mor}}}
\newcommand{\St}{{\sf {St}}}
\def\rank{\mathop{\mathrm{ rank}}\nolimits}
\newcommand{\Tr}{{\mathrm {Tr}}}
\newcommand{\tr}{{\mathrm {tr}}}
\newcommand{\Gal}{{\it Gal}}
\newcommand{\Spec}{{\mathrm {Spec}}}
\newcommand{\ad}{{\mathrm {ad}}}
\newcommand{\Sym}{{\mathrm {Sym}}}
\newcommand{\Char}{{\mathrm {char}}}
\newcommand{\pr}{{\mathrm {pr}}}
\newcommand{\rad}{{\mathrm {rad}}}
\newcommand{\abel}{{\mathrm {abel}}}
\newcommand{\codim}{{\mathrm {codim}}}
\newcommand{\ind}{{\mathrm {ind}}}
\newcommand{\Res}{{\mathrm {Res}}}
\newcommand{\Ann}{{\mathrm {Ann}}}
\newcommand{\Ext}{{\mathrm {Ext}}}
\newcommand{\Alt}{{\mathrm {Alt}}}
\newcommand{\AAA}{{\sf A}}
\newcommand{\SSS}{{\sf S}}
\newcommand{\CC}{{\mathbb C}}
\newcommand{\CB}{{\mathbf C}}
\newcommand{\RR}{{\mathbb R}}
\newcommand{\QQ}{{\mathbb Q}}
\newcommand{\ZZ}{{\mathbb Z}}
\newcommand{\KK}{{\mathbb K}}
\newcommand{\NB}{{\mathbf N}}
\newcommand{\ZB}{{\mathbf Z}}
\newcommand{\OB}{{\mathbf O}}
\newcommand{\EE}{{\mathbb E}}
\newcommand{\PP}{{\mathbb P}}
\newcommand{\GC}{{\mathcal G}}
\newcommand{\HC}{{\mathcal H}}
\newcommand{\AC}{{\mathcal A}}
\newcommand{\BC}{{\mathcal B}}
\newcommand{\GA}{{\mathfrak G}}
\newcommand{\SC}{{\mathcal S}}
\newcommand{\TC}{{\mathcal T}}
\newcommand{\DC}{{\mathcal D}}
\newcommand{\LC}{{\mathcal L}}
\newcommand{\RC}{{\mathcal R}}
\newcommand{\CL}{{\mathcal C}}
\newcommand{\EC}{{\mathcal E}}
\newcommand{\GCD}{\GC^{*}}
\newcommand{\TCD}{\TC^{*}}
\newcommand{\FD}{F^{*}}
\newcommand{\GD}{G^{*}}
\newcommand{\HD}{H^{*}}
\newcommand{\hG}{\hat{G}}
\newcommand{\hP}{\hat{P}}
\newcommand{\hQ}{\hat{Q}}
\newcommand{\hR}{\hat{R}}
\newcommand{\GCF}{\GC^{F}}
\newcommand{\TCF}{\TC^{F}}
\newcommand{\PCF}{\PC^{F}}
\newcommand{\GCDF}{(\GC^{*})^{F^{*}}}
\newcommand{\RGTT}{R^{\GC}_{\TC}(\theta)}
\newcommand{\RGTA}{R^{\GC}_{\TC}(1)}
\newcommand{\Om}{\Omega}
\newcommand{\eps}{\epsilon}
\newcommand{\varep}{\varepsilon}
\newcommand{\al}{\alpha}
\newcommand{\chis}{\chi_{s}}
\newcommand{\sigmad}{\sigma^{*}}
\newcommand{\PA}{\boldsymbol{\alpha}}
\newcommand{\gam}{\gamma}
\newcommand{\lam}{\lambda}
\newcommand{\la}{\langle}
\newcommand{\ra}{\rangle}
\newcommand{\hs}{\hat{s}}
\newcommand{\htt}{\hat{t}}
\newcommand{\sgn}{\mathsf{sgn}}
\newcommand{\SR}{^*R}
\newcommand{\tn}{\hspace{0.5mm}^{t}\hspace*{-0.2mm}}
\newcommand{\ta}{\hspace{0.5mm}^{2}\hspace*{-0.2mm}}
\newcommand{\tb}{\hspace{0.5mm}^{3}\hspace*{-0.2mm}}
\def\skipa{\vspace{-1.5mm} & \vspace{-1.5mm} & \vspace{-1.5mm}\\}
\newcommand{\tw}[1]{{}^#1\!}
\renewcommand{\mod}{\bmod \,}
\newcommand{\edit}[1]{{\color{red} #1}}

\newcommand{\carolinacomment}{\textcolor{cyan}}

\marginparsep-0.5cm

\renewcommand{\thefootnote}{\fnsymbol{footnote}}
\footnotesep6.5pt

\title{The Field of Values of the Height Zero Characters}
\author[G. Navarro]{Gabriel Navarro}
\address{Departament de Matem\`atiques, Universitat de Val\`encia, 46100 Burjassot,
Val\`encia, Spain}
\email{gabriel@uv.es}

\author[L. Ruhstorfer]{Lucas Ruhstorfer}
\address{School of Mathematics and Natural Sciences University of Wuppertal, 
	42119 Wuppertal, Germany}
\email{ruhstorfer@uni-wuppertal.de}

\author[P. H. Tiep]{Pham Huu Tiep}
\address{Department of Mathematics, Rutgers University, Piscataway, NJ 08854, USA}
\email{tiep@math.rutgers.edu}

\author[C. Vallejo]{Carolina Vallejo}
\address{Dipartimento di Matematica e Informatica `Ulisse Dini', 50134 Firenze (Italy)}
\email{carolina.vallejorodriguez@unifi.it}

\thanks{The research of the first author is supported by Ministerio de Ciencia
e Innovaci\'on
PID2019-103854GB-I00. The third author gratefully acknowledges the support of the NSF (grants DMS-1840702 and DMS-2200850),
the Simons Foundation, and the Joshua Barlaz Chair in Mathematics. The fourth author acknowledges support from the Rita Levi Montalcini Program (bando 2019)
and from the INdAM-GNSAGA}
\thanks{Part of this work was done when the third author visited Princeton University and MIT. 
It is a pleasure to thank Princeton University and MIT for generous hospitality and stimulating 
environment.}

\keywords{}

\subjclass[2010]{Primary 20D20; Secondary 20C15}

\begin{abstract}
We determine what are the fields of values of the irreducible
$p$-height zero characters of all finite groups for $p=2$; we
conjecture what they should be for odd primes, and reduce this statement
to a problem on blocks of quasi-simple groups.
\end{abstract}

\maketitle

\section{Introduction} 
Every Abelian 
number field can be realized as the field of values of a complex
 irreducible character of a finite group (see, for instance, Theorem 2.2 of \cite{NT}).
 Motivated by the McKay conjecture and the McKay-Navarro conjecture
\cite[Conjecture A]{N1}, it is of great interest to characterize the field of values of the irreducible characters of degree 
not divisible by a fixed prime $p$. This was established in \cite{NT} for $p=2$ following seminal work on the
fields of values of odd-degree characters in \cite{ILNT}.
 For instance, the quadratic field of values of the odd-degree irreducible characters of all finite groups 
 are exactly the fields $\Q$, $\Q(i)$ and 
 $\Q(\sqrt d)$, where  $d \equiv 1$ mod 4 is a square-free number.
 
 The characters of degree not divisible by $p$ constitute only a part of the Brauer's 
 $p$-height zero characters, namely those 
 that lie in $p$-blocks of maximal defect. 
 The McKay conjecture admits a version for Brauer's $p$-blocks 
  (the celebrated Alperin--McKay conjecture)  where characters of degree not divisible by $p$ are replaced by $p$-height zero characters, and this conjecture
also admits a version taking into account the action of Galois automorphisms \cite[Conjecture B]{N1} (which is sometimes referred to as the Alperin-McKay-Navarro conjecture).

 The main question now is: What are then the field of values of the 2-height zero characters? 
 As we have mentioned,
  $\Q(\sqrt 3)$, say,  cannot be the field of values of an irreducible character of odd degree,
 but it is easy to find many 2-height zero characters having this field of values.
 (For instance, in a double cover of $\sf S_5$.)
 However, $\Q(\sqrt 2)$ or $\Q(\sqrt{-2})$, say, do  not appear to be the field of values of
 any 2-height zero character.

%
 
 When studying fields of values of characters, character conductors are a fundamental invariant.
 If $\chi$ is a character of a group $G$ and $\Q(\chi)$ is the smallest field extension of $\Q$ containing the values of $\chi$,
 then we define $c(\chi)$, the conductor of $\chi$, to be the smallest integer $n$ such that 
 $\Q(\chi)$ is contained in the $n$-th cyclotomic field $\Q_n=\Q(e^{2\pi i / n})$.
 If $F$ is any subfield of $\C$, then we write $F(\chi)=\langle F, \Q(\chi) \rangle$.
 
 \medskip
 
 The following is the main result of this paper. Its proof uses the Classification of Finite Simple Groups, together
 with the work of \cite{BDR}, and its refinement by \cite{KL}.
 
\begin{thmA}
Let $G$ be a finite group, and 
let $\chi \in \irr G$ of 2-height zero. Write $c(\chi)=2^a m$, where $m$ is odd and $a\ge 0$.
Then $\Q_{2^a} \sbs \Q_m(\chi)$.
\end{thmA}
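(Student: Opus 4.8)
The plan is to reduce the statement to the case of a single $2$-block of a quasi-simple group via a minimal counterexample argument, and then to invoke the classification-based input from \cite{BDR} and \cite{KL} on that block.

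First I would reformulate the conclusion in terms of Galois action. The inclusion $\Q_{2^a}\sbs\Q_m(\chi)$ is equivalent to the assertion that every Galois automorphism $\sigma\in\Gal(\Q_{|G|}/\Q)$ fixing $\Q_m(\chi)$ also fixes $\Q_{2^a}$, i.e.\ that the $2$-part of the conductor cannot be ``detected'' by a $\sigma$ that acts trivially on the odd part of the field of values of $\chi$. Concretely, writing $\sigma_k$ for the automorphism sending a root of unity $\zeta$ of $2$-power order to $\zeta^k$ and fixing roots of unity of odd order, the claim is: if $\sigma_k$ fixes $\chi$, then $\sigma_k$ fixes $\Q_{2^a}$, that is, $k\equiv 1\pmod{2^a}$. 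So I must show that no $\chi$ of $2$-height zero is fixed by some $\sigma_k$ with $k\not\equiv 1\pmod{2^a}$ while $c(\chi)=2^am$.

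Next I would set up a minimal counterexample $(G,\chi)$ with $|G|$ minimal, and exploit standard Clifford-theoretic and block-theoretic reductions: pass to normal subgroups, quotients by normal subgroups inside $\ker\chi$ or inside $\OO_{2'}(G)$, and use that $2$-height zero characters of $G$ lying over a $\sigma_k$-fixed character of a normal subgroup behave well under Clifford correspondence (the Clifford correspondent has the same conductor-relevant data, since $\sigma_k$ commutes with induction and restriction). The key point of the reduction is that the property ``$\chi$ has $2$-height zero, is fixed by $\sigma_k$, and $2^a\nmid$ the order of the relevant $2$-power root of unity appearing in its values'' must be pushed down to a character of a subgroup or section that is, up to central extensions, quasi-simple; here one uses the theory of the Alperin--McKay--Navarro setup together with the fact that the conductor obstruction is ``local'' to a defect group and its normalizer, so one reduces to the Brauer correspondent in $\N_G(Q)$ for a suitable $2$-subgroup $Q$, and then to quasi-simple components of a central product. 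This is where the cited refinement of \cite{BDR} by \cite{KL} enters: it provides exactly the statement that for a $2$-block $B$ of a quasi-simple group, the $2$-height zero characters in $B$ are ``$2$-rational relative to their odd conductor'' in the precise sense needed, i.e.\ any $\sigma_k$ fixing such a character fixes $\Q_{2^a}$.

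The main obstacle I anticipate is the block-theoretic bookkeeping in the reduction step: unlike the pure McKay situation, one is not free to replace $\chi$ by a character of $\N_G(D)$ directly (the Alperin--McKay bijection is not known in general), so one must instead argue that a minimal counterexample cannot have a proper normal subgroup in a convenient position, forcing $G$ to be (close to) simple, and then control the conductor under the outer automorphisms and diagonal automorphisms that can act on a quasi-simple $G$ — this is precisely the delicate part handled by \cite{BDR,KL}. A secondary technical point is handling the prime $p=2$ specially: the argument that odd-order roots of unity in the values are untouched, combined with the structure of $\Gal(\Q_{2^a}/\Q)$ being $2$-elementary abelian (for $a\ge 3$ it is $\Z/2\times\Z/2^{a-2}$), means one must track not just one $\sigma_k$ but potentially several, and reduce to showing the stabilizer of $\chi$ in $\Gal(\Q_{2^a}/\Q)$ is trivial; I would phrase the whole argument in terms of this stabilizer being trivial, which streamlines the inductive step and matches the form in which the quasi-simple input is available.
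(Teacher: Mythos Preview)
Your overall shape --- reduce to quasi-simple groups and invoke classification-based input --- matches the paper, but there are two genuine gaps.

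\textbf{The role of \cite{BDR} and \cite{KL} is misidentified.} These references do \emph{not} supply the statement ``for a $2$-block of a quasi-simple group, every $2$-height zero character satisfies $\Q_{2^a}\sbs\Q_m(\chi)$''. They are Morita-equivalence results: Bonnaf\'e--Dat--Rouquier gives derived/Morita equivalences between blocks of finite reductive groups, and Kessar--Linckelmann shows such equivalences descend to an absolutely unramified ring, which in turn yields a height-preserving bijection $\chi\leftrightarrow\tilde\chi$ with $\Q_l(\chi)=\Q_l(\tilde\chi)$ for some odd $l$. This lets one replace a block of a quasi-simple $G$ by a block of a strictly smaller group $H$ and continue by induction. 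It does \emph{not} finish the quasi-simple case; one still has to verify the statement directly for the blocks that are not reduced away --- unipotent and quasi-isolated blocks --- and this requires a substantial case-by-case analysis (alternating/sporadic, Lie type in characteristic $2$, classical and exceptional groups in odd characteristic), each with its own argument. Your proposal treats this as a black box that does not exist.

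\textbf{The reduction step is underspecified at the hardest point.} You correctly flag that one cannot simply pass to the Brauer correspondent in $\N_G(D)$ (Alperin--McKay is unavailable), but you do not say what replaces it. The paper does three things you omit. First, a direct treatment of blocks with normal defect group, where one computes $c(\chi)_p$ explicitly from the linear constituent on the defect group. Second, for a normal subgroup of index $p$, a Thompson-style multiplicity argument: one produces an $H$-invariant height-zero $\varphi$ under $\theta$ in $\N_G(D_0)\cap N$ with $[\theta_{H\cap N},\varphi]\not\equiv 0\pmod p$ and controlled field of values, then compares extensions via the uniqueness in the $[\chi_H,\xi]$ decomposition; this transfers the conductor data to a strictly smaller group without any Alperin--McKay bijection. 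Third, when $G/N$ is simple non-abelian of order divisible by $p$, one passes to a genuine quasi-simple group via a character-triple isomorphism that preserves height zero and satisfies $\Q(\chi)=\Q(\chi^*,\theta)$; this is how the induction reaches the quasi-simple case while keeping track of conductors. None of these mechanisms appears in your sketch, and the index-$p$ step in particular is where a naive minimal-counterexample argument stalls.
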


In fact, the fields of values of the 2-height zero irreducible characters can be characterized in the following way.
Let $\mathcal F_2$ be the set of Abelian number fields $F$ such that $\Q_n=\langle \Q_m, F \rangle$,
where here $n$ is the conductor of the field $F$, and $n=2^am$, for some odd number $m$.

\begin{thmB}
The set consisting of the fields of values of the 2-height zero characters of finite groups is exactly  $\mathcal F_2$.
\end{thmB}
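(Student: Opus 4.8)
The plan is to establish the two inclusions of Theorem B separately. That every field of values of a $2$-height zero character belongs to $\mathcal F_2$ is immediate from Theorem A: if $\chi\in\irr G$ has $2$-height zero and we set $F=\Q(\chi)$ and $n=c(\chi)=2^am$ (so that $n$ is also the conductor of $F$), then Theorem A gives $\Q_{2^a}\sbs\Q_m(\chi)=\langle\Q_m,F\rangle$; since moreover $\Q_m\sbs\langle\Q_m,F\rangle$ and $F\sbs\Q_n$, we obtain $\Q_n=\langle\Q_{2^a},\Q_m\rangle\sbs\langle\Q_m,F\rangle\sbs\Q_n$, whence $\langle\Q_m,F\rangle=\Q_n$, that is, $F\in\mathcal F_2$. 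Note that this direction uses nothing beyond Theorem A.

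For the converse I would argue by an explicit construction. Let $F\in\mathcal F_2$ with conductor $n=2^am$, put $H=\Gal(\Q_n/F)$, and view $H$ as a subgroup of $(\Z/n\Z)^{\times}=\Aut(C_n)$. Take $G=C_n\rtimes H$ and a faithful $\lambda\in\irr{C_n}$. As $\lambda$ is faithful, $\Stab_H(\lambda)=1$, so $\chi:=\lambda^G\in\irr G$ has degree $|H|$. Since $\chi$ is induced from the normal subgroup $C_n$, it vanishes off $C_n$ and all its values lie in $\Q_n$; and because the Galois action commutes with induction while $C_n$ acts trivially by conjugation on $\irr{C_n}$, a short computation shows $\{\sigma\in\Gal(\Q_n/\Q):\chi^{\sigma}=\chi\}=H$, so that $\Q(\chi)=\Q_n^{H}=F$. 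It therefore remains only to check that $\chi$ has $2$-height zero — and this is the step where the hypothesis $F\in\mathcal F_2$ enters, and the one I expect to be the main obstacle.

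Here is the key translation. By Galois theory the defining condition $\Q_n=\langle\Q_m,F\rangle$ of $\mathcal F_2$ is equivalent to $H\cap\Gal(\Q_n/\Q_m)=1$, i.e. $H\cap\big((\Z/2^a\Z)^{\times}\times 1\big)=1$. Writing $\lambda=\lambda_2\lambda_{2'}$ with $\lambda_{2'}=\lambda|_{\oh{2'}{C_n}}$ a faithful character of the Hall $2'$-subgroup $C_m=\oh{2'}{C_n}$, this is exactly the assertion $\Stab_H(\lambda_{2'})=1$. Now let $b$ be the $2$-block of $C_n$ containing $\lambda$: since $C_n$ is abelian, $b$ has defect group $\oh{2}{C_n}=C_{2^a}$ and depends only on $\lambda_{2'}$, and since $C_n$ acts trivially on its own blocks, the inertia group of $b$ in $G$ is $C_n\rtimes\Stab_H(\lambda_{2'})=C_n$. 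By Fong--Reynolds there is then a unique block $B$ of $G$ lying over $b$, having a defect group in common with $b$, namely $C_{2^a}$, and $\chi=\lambda^G$ lies in $B$ since $\lambda\in\irr b$. Finally $\chi(1)_2=|H|_2=(2^am|H|)_2/2^a=|G:C_{2^a}|_2$, so $\chi$ has height $0$, as required. Thus the delicate point is precisely this last paragraph: recognising that membership in $\mathcal F_2$ amounts to $H$-stability of the $2'$-part of $\lambda$, and then reading off the defect group of $\chi$ from Fong--Reynolds; everything else is formal Galois theory and elementary Clifford theory. (Alternatively, once $B$ is seen to have the abelian defect group $C_{2^a}$, one may finish by invoking that all irreducible characters in a block with abelian defect group have height zero, but the direct degree count above makes this unnecessary.)
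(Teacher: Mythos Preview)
Your proposal is correct and follows essentially the same approach as the paper: the inclusion ``fields of values $\subseteq\mathcal F_2$'' is deduced exactly as you do from Theorem~A, and for the reverse inclusion the paper's Theorem~\ref{G} uses the very same construction $G=C_n\rtimes H$ with $\chi=\lambda^G$, identifying the stabilizer of the $2'$-part of $\lambda$ with $\Gal(\Q_n/\langle\Q_m,F\rangle)$ and concluding height zero via Fong--Reynolds. The only cosmetic difference is that the paper states Theorem~\ref{G} for all primes $p$ (with the weaker condition $p\nmid|\Q_n:\langle\Q_m,F\rangle|$), which for $p=2$ collapses to your condition $H\cap\Gal(\Q_n/\Q_m)=1$.
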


As a consequence of Theorem B we obtain that the following are the quadratic number fields that appear as fields of values of  $2$-height zero
characters.

\begin{corC}
Let $F$ be a quadratic number field. Then $F=\Q(\chi)$ for some 2-height zero character $\chi$ if and only if 
 $\Q(\chi)=\Q(\sqrt d)$ where $d \neq 1$ is an odd square-free integer.
\end{corC}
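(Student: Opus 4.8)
The plan is to derive Corollary C from Theorem B by making the set $\mathcal F_2$ completely explicit in the quadratic case. A quadratic field $F = \Q(\sqrt D)$ (with $D$ square-free) has a well-known conductor $n$: it equals $|D|$ if $D \equiv 1 \pmod 4$ and $4|D|$ if $D \equiv 2, 3 \pmod 4$. Writing $n = 2^a m$ with $m$ odd, the criterion defining $\mathcal F_2$ is that $\Q_n = \langle \Q_m, F\rangle$; equivalently, $\Q_n = \Q_m(\sqrt D)$. So I would first carry out the following purely arithmetic analysis of when this holds, splitting into the three congruence classes of $D$ modulo $4$.

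First I would dispose of the case $a = 0$, i.e. $n$ odd, which forces $D \equiv 1 \pmod 4$ and $n = |D|$ odd; then $\Q_m = \Q_n \supseteq F$ trivially, so $F \in \mathcal F_2$. Note every such $F = \Q(\sqrt D)$ with $D$ odd square-free and $D \equiv 1 \pmod 4$ arises here (including $D < 0$). Next, the case $D \equiv 3 \pmod 4$: here $n = 4|D|$, so $a = 2$ and $m = |D|$ is odd. Then $\langle \Q_m, F \rangle = \Q_m(\sqrt D)$. Since $\sqrt{-D} \in \Q_4 = \Q(i)$ is impossible while $\sqrt{\pm m} \in \Q_m$, one checks $\Q_m(\sqrt D) = \Q_m$ if $-1$ is a square in... — more carefully, I would use that $\Q_m$ contains $\sqrt{m^*}$ where $m^* = \pm m \equiv 1 \pmod 4$, so $\Q_m(\sqrt D) = \Q_m(\sqrt{D/m^*})$, and $D/m^* = \pm 1$; thus $\Q_m(\sqrt D)$ equals $\Q_m$ or $\Q_m(i) = \Q_{4m} = \Q_n$. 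A short case check on signs shows that in fact $\Q_m(\sqrt D) = \Q_n$ precisely when $D \equiv 3 \pmod 4$ (as opposed to $D \equiv 1$), giving $F \in \mathcal F_2$ for all odd square-free $D \equiv 3 \pmod 4$ as well. This shows every odd square-free $D \neq 1$ yields $F \in \mathcal F_2$.

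Finally, I would rule out the even case: if $D \equiv 2 \pmod 4$, write $D = 2D'$ with $D'$ odd square-free, and $n = 4|D| = 8|D'|$, so $a = 3$, $m = |D'|$. Now $\langle \Q_m, F\rangle = \Q_m(\sqrt{2D'}) = \Q_m(\sqrt{2})$ (up to the $\sqrt{\pm m}$ adjustment as above, since $D'/m^* = \pm 1$). But $\sqrt 2 \in \Q_8 \setminus \Q_4$, so $\Q_m(\sqrt 2) \subseteq \Q_{8m}$ is a subfield of degree $2m'/\text{(something)}$ — concretely $[\Q_m(\sqrt 2):\Q_m] = 2$ while $[\Q_{8m}:\Q_m] = 4$, so $\Q_m(\sqrt 2) \subsetneq \Q_{8m} = \Q_n$, hence $F \notin \mathcal F_2$. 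Combining the three cases with Theorem B: the quadratic members of $\mathcal F_2$ are exactly the $\Q(\sqrt D)$ with $D$ odd square-free and $D \neq 1$, which is the assertion of Corollary C.

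The main obstacle is the bookkeeping in the $D \equiv 3 \pmod 4$ case: one must be careful that the field $\langle \Q_m, F\rangle$ really is all of $\Q_n$ and not a proper subfield, which hinges on the fact that $\Q_m$, for $m$ odd, contains $\sqrt{m}$ exactly when $m \equiv 1 \pmod 4$ and $\sqrt{-m}$ when $m \equiv 3 \pmod 4$ — so one tracks both the sign of $D$ and $m \bmod 4$ simultaneously. Everything else is routine cyclotomic degree arithmetic, and the passage from the arithmetic description of $\mathcal F_2$ to the statement about fields of values is immediate from Theorem B.
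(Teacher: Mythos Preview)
Your proposal is correct and takes essentially the same approach as the paper: derive Corollary C from Theorem B by determining explicitly which quadratic fields lie in $\mathcal F_2$. The paper's written argument (Corollary \ref{onCorollaryC}) spells out only the existence direction and, for the $d\equiv 3\pmod 4$ case, uses a slightly slicker degree/conductor observation (since $[\Q_{4|d|}:\Q_{|d|}]=2$, the compositum $\langle \Q_{|d|},\Q(\sqrt d)\rangle$ equals $\Q_{4|d|}$ unless $\Q(\sqrt d)\subseteq\Q_{|d|}$, which the conductor forbids) in place of your Gauss-sum bookkeeping with $\sqrt{m^*}\in\Q_m$; both routes are fine, and your explicit exclusion of even $d$ via $[\Q_{8m}:\Q_m]=4>2$ is exactly what the paper leaves implicit.
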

    
In this context, it is natural to wonder what happens for odd primes. The fields of values of the characters of degree not divisible by $p$ are conjectured to be precisely the Abelian number fields
$F$ such that $[\Q_{p^a}:\Q_{ p^a} \cap F]$ is not divisible by $p$ in \cite[Conjecture C]{NT}.
This conjecture does not seem to follow from the McKay-Navarro conjecture \cite[Conjecture A]{N1}.
 As happens in the case of characters of degree not divisible by $p$,  we can only conjecture what the fields of values of the $p$-height zero characters should be for odd primes. This is Conjecture D below. 
 The novelty is that
 we can show that the statement of Conjecture D follows from the statement of the Alperin-McKay-Navarro conjecture.

 For any prime $p$, let $\mathcal F_p$ be the set of Abelian number fields $F$ with conductor
 $n=p^a m$, where $p$ does not divide $m$, such that the degree  $|\Q_n: \langle \Q_m, F\rangle |$ is not divisible by $p$.
 Notice that  the fields
$F$ with $p$-part of the conductor $p^a$ such that $[\Q_{p^a}:\Q_{ p^a} \cap F]$ is not divisible by $p$ are a subclass contained in $\mathcal F_p$.

\begin{conjD}
The set of fields of values of the $p$-height zero characters of finite groups is exactly  $\mathcal F_p$.
\end{conjD}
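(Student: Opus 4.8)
We split Conjecture D into its two inclusions. For the inclusion $\mathcal F_p\subseteq\{\text{fields of values of }p\text{-height zero characters}\}$ I would give a direct construction, independent of the classification. Given $F\in\mathcal F_p$ of conductor $n=p^am$ with $p\nmid m$, set $H=\Gal(\Q_n/F)$, regarded inside $\Gal(\Q_n/\Q)\cong(\Z/n)^{\times}\cong\Aut(C_n)$, and form $G=C_n\rtimes H$ with the natural action. For a faithful $\lambda\in\Irr(C_n)$, the character $\chi=\Ind_{C_n}^{G}\lambda$ is irreducible (its stabiliser in $G$ is $C_n$), vanishes off $C_n$ because $H$ is Abelian, and satisfies $\Q(\chi)=\Q_n^{H}=F$. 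The block of $\lambda$ in the Abelian group $C_n$ has defect group the Sylow $p$-subgroup $P=C_{p^a}$ of $C_n$, and the $G$-stabiliser of that block is $C_n\rtimes\bigl(H\cap\Gal(\Q_n/\Q_m)\bigr)=C_n\rtimes\Gal(\Q_n/\langle\Q_m,F\rangle)$, whose $H$-part has order $[\Q_n:\langle\Q_m,F\rangle]$ — prime to $p$ precisely because $F\in\mathcal F_p$. Hence $P$ is a normal Sylow $p$-subgroup of that stabiliser, so by Fong--Reynolds the block of $G$ containing $\chi$ has defect group $P$; since $\chi(1)_p=|H|_p=[G:P]_p$, the character $\chi$ has $p$-height zero, as desired. (When $p$ is odd one can moreover check, using that $(\Z/p^a)^{\times}$ is cyclic, that $\mathcal F_p$ is then the whole class of Abelian number fields, so for odd $p$ this construction carries essentially all of Conjecture D; the genuinely restrictive case is $p=2$.)

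For the reverse inclusion — every field of values of a $p$-height zero character lies in $\mathcal F_p$ — I would argue via the Alperin--McKay--Navarro conjecture (Conjecture B of \cite{N1}); this is the direction that is deep already for $p=2$, where it is Theorem A. Let $\chi\in\Irr_0(B)$ for a $p$-block $B$ of $G$ with defect group $D$ and Brauer correspondent $b$ in $N=\N_G(D)$. Granting Conjecture B, fix the Galois-compatible bijection $\Irr_0(B)\to\Irr_0(b)$, $\chi\mapsto\chi^{*}$, that it predicts: $\mathcal H$-equivariance together with the compatibility of the correspondence with the stabiliser $\N_G(D)_\chi$ and with the automorphisms of $\Gal(\Q_{|G|}/\Q)$ fixing the prime-to-$p$ roots of unity forces $\chi$ to satisfy the condition defining $\mathcal F_p$ if and only if $\chi^{*}$ does. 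So it suffices to establish the $\mathcal F_p$-condition for $\chi^{*}\in\Irr_0(b)$, i.e.\ in a $p$-block with \emph{normal} defect group, where $\Irr_0(b)$ is described by Clifford theory over $D$ and the computation of character fields is tractable. Finally, Conjecture B of \cite{N1} is itself equivalent to an inductive condition that has to be verified only on the quasi-simple groups, so the same mechanism reduces Conjecture D to a problem about the $p$-blocks of quasi-simple groups, exactly as for $p=2$, where the required input is the analysis of \cite{BDR} and its refinement \cite{KL} packaged in Theorems A and B.

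The hard part is this quasi-simple input. One must show that, for a quasi-simple group $S$ and a $p$-block of $S$ of non-central defect, the Brauer-correspondent bijection can be chosen so as to also track $\langle\Q_m,\Q(-)\rangle$ — that is, the prime-to-$p$ part of the conductor and the precise Galois stabiliser inside the Sylow $p$-subgroup of $\Gal(\Q_{|S|}/\Q_m)$ — while remaining compatible with the action of outer automorphisms. For the groups of Lie type this amounts to controlling the effect of Galois automorphisms on Lusztig series compatibly with Jordan decomposition and with the Bonnaf\'e--Dat--Rouquier equivalence; for $p=2$ this is what is carried out in \cite{BDR,KL}, and for odd $p$ the corresponding analysis is what remains open. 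By the reduction above, establishing it — together with the elementary construction of the first paragraph — would yield Conjecture D in full.
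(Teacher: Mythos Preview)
Your construction for the inclusion $\mathcal F_p\subseteq\{\text{fields of values}\}$ is correct and is exactly the paper's Theorem~\ref{G}: the same semidirect product $C_n\rtimes H$ with $H=\Gal(\Q_n/F)$, the same induced character, and the same Fong--Reynolds computation of the defect group via the stabiliser of the block of $\lambda$ (equivalently, of $\lambda|_{C_m}$). One cosmetic slip: the vanishing of $\chi$ off $C_n$ is because $C_n$ is normal, not because $H$ is Abelian.

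Your parenthetical observation that for \emph{odd} $p$ the class $\mathcal F_p$ consists of \emph{all} Abelian number fields is correct and worth emphasising. Since $(\Z/p^a)^\times$ is cyclic, any subgroup of it of order divisible by $p$ contains $\Gal(\Q_{p^a}/\Q_{p^{a-1}})$; but if $F$ has conductor exactly $p^am$ then $\Gal(\Q_n/F)$ cannot contain that subgroup, so $|H\cap\Gal(\Q_n/\Q_m)|$ is automatically prime to $p$. This makes the ``hard'' inclusion vacuous for odd $p$ and hence settles Conjecture~D completely in that case. The paper does not record this; its reduction machinery (Theorem~\ref{reduction}) and the deduction from AMN (Theorem~\ref{cons}) are therefore only needed when $p=2$.

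Where your proposal falls short is precisely at $p=2$. Your route---assume AMN, pass to $\N_G(D)$, then handle the normal-defect case by ``Clifford theory over $D$''---is exactly the content of the paper's Theorem~\ref{cons} together with Lemma~\ref{normald}, and it is \emph{conditional} on AMN. The paper, by contrast, proves the $p=2$ case \emph{unconditionally}: Theorem~\ref{reduction} gives a direct reduction (not via AMN) to the explicit and comparatively mild condition of Conjecture~\ref{conjqs} on quasi-simple groups, and Section~7 verifies that condition for $p=2$ case by case. Your sentence ``Conjecture~B is itself equivalent to an inductive condition on quasi-simple groups, so the same mechanism reduces Conjecture~D'' conflates two very different things: the inductive AMN condition is far stronger than Conjecture~\ref{conjqs} and is not known for all quasi-simple groups at $p=2$, so invoking it does not recover the paper's unconditional Theorem~A. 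If you want an unconditional proof for $p=2$, you need the paper's direct reduction and its quasi-simple verification, not the AMN route.
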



We show that any field in $\mathcal F_p$ is the field of values of a $p$-height zero character in Theorem \ref{G}.
Hence settling one of the containments in Conjecture D (and also reducing the proof of Theorem B to proving Theorem A).
We reduce the verification of  the other containment to a problem on quasi-simple groups in Theorem \ref{reduction}.

We show that the statement of Conjecture D follows from the statement of the Alperin-McKay-Navarro conjecture \cite[Conjecture B]{N1} in Theorem \ref{cons}. 
 A fundamental part of our work is devoted to show that Theorem A is true for quasi-simple groups. 
 We believe that this will be useful in the final verification of the Alperin-McKay-Navarro conjecture.

\section{Conductors}

Let us start by recording some elementary results on characters and conductors that we will frequently use.
Recall that if $\psi$ is a character of a finite group, then $\psi(g)$ is a sum of $o(g)$-th roots of unity for $g \in G$, and therefore
the field of values $\Q(\psi)$, which is the smallest field containing $\psi(g)$ for all $g \in G$,  is contained in $\Q_{|G|}=\Q(e^{2\pi i/|G|})$.
The conductor $c(\psi)$ is the smallest $n$ such that $\Q(\psi) \sbs \Q_n=\Q(e^{2 \pi i /n})$. Therefore
$c(\psi)$ divides $|G|$. Moreover, $\Q(\psi) \sbs \Q_m$ if and only if $c(\psi)$ divides $m$. If $F$ is an Abelian number field, that is $F\sbs \C$ and $F/\Q$ is a Galois extension with ${\rm Gal}(F/\Q)$ abelian,
then the Kronecker-Weber theorem implies that $F \sbs \Q_n$ for some $n$ and $c(F)$, the conductor of $F$, is the smallest such $n$.  By elementary Galois theory, recall that
$c(\langle F_1, F_2\rangle)$ is the least common multiple of $c(F_1)$ and $c(F_2)$. 

\medskip

In this paper, if $p$ is a prime and $n \ge 1$ is an integer, then
$n_p$ is the largest power of $p$ dividing $n$, and $n_{p'}=n/n_p$. We call $n_p$ the $p$-part of $n$ and $n_{p'}$ the $p'$-part of $n$.
 For a fixed prime $p$,   we are interested in the $p$-parts of conductors.
If $\psi$ is a character and $c(\psi)_p=1$, then $\psi$ is called $p$-rational.
If $p=2$,  $\psi$ is either 2-rational
or $c(\psi)_2\ge 4$. 
Notice that if $\chi$ is linear
character then $c(\chi)=o(\lambda)$ unless $o(\lambda)_2=2$
in which case $c(\chi)=o(\lambda)/2$.

 \begin{lem}\label{useful}
    Let $p$ be a prime.
    Suppose that $\chi \in \irr G$, and write $c(\chi)=p^a m$, where $n$ is not divisible by $p$.
    If  $n$ is a natural number not divisible by $p$
   with $\Q_{p^f} \sbs \Q_{pn}(\chi)$, then $\Q_{p^f} \sbs \Q_{pm}(\chi)$.
   Moreover $f\le a$ unless possibly when $f=1$ and $a=0$. 
    \end{lem}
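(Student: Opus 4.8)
The plan is to reduce both containments to an inclusion of subgroups of a cyclotomic Galois group via the Galois correspondence, which is exactly the right tool here because the naive conductor count is not enough: knowing that $c(\Q_{p^f})$ divides $c(\Q_{pm}(\chi))$ does \emph{not} by itself give $\Q_{p^f}\sbs \Q_{pm}(\chi)$. First I would fix an integer $t$ divisible by $pn$ and by $|G|$ (hence by $c(\chi)=p^a m$), and write $t=p^b M$ with $p\nmid M$, so that all the fields in sight lie inside $\Q_t$. Identifying $\operatorname{Gal}(\Q_t/\Q)$ with $(\Z/p^b)^{\times}\times(\Z/M)^{\times}$ in the standard way, the element $\sigma$ corresponding to $(x,y)$ fixes $\Q_p$, respectively $\Q_m$, respectively $\Q_{p^f}$, exactly when $x\equiv 1\pmod p$, respectively $y\equiv 1\pmod m$, respectively $x\equiv 1\pmod{p^f}$; and $\sigma$ fixes $\Q(\chi)$ exactly when $\chi^{\sigma}=\chi$. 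The one extra input is that, since $\Q(\chi)\sbs\Q_{c(\chi)}=\Q_{p^a m}$, the conjugate $\chi^{\sigma}$ depends only on the image of $\sigma$ in $\operatorname{Gal}(\Q_{p^a m}/\Q)=(\Z/p^a)^{\times}\times(\Z/m)^{\times}$, i.e.\ only on the pair $(x\bmod p^a,\ y\bmod m)$.

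With this dictionary, the hypothesis $\Q_{p^f}\sbs\Q_{pn}(\chi)=\langle \Q_p,\Q_n,\Q(\chi)\rangle$ says precisely: whenever $x\equiv 1\pmod p$, $y\equiv 1\pmod n$ and $\chi^{(x,y)}=\chi$, then $x\equiv 1\pmod{p^f}$. To deduce $\Q_{p^f}\sbs\Q_{pm}(\chi)=\langle \Q_p,\Q_m,\Q(\chi)\rangle$, I would take an arbitrary $(x,y)$ with $x\equiv 1\pmod p$, $y\equiv 1\pmod m$ and $\chi^{(x,y)}=\chi$, and simply replace $y$ by $1$: since $y\equiv 1\pmod m$, the pairs $(x,y)$ and $(x,1)$ have the same image in $(\Z/p^a)^{\times}\times(\Z/m)^{\times}$, whence $\chi^{(x,1)}=\chi^{(x,y)}=\chi$; as also $x\equiv 1\pmod p$ and trivially $1\equiv 1\pmod n$, the hypothesis applies to $(x,1)$ and yields $x\equiv 1\pmod{p^f}$. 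This proves the first assertion. The point is that the defining congruence condition for $\Q_{pm}(\chi)$ is just the specialization at $y=1$ of the one for $\Q_{pn}(\chi)$, and I do not expect any genuine obstacle beyond keeping straight which congruence encodes which field.

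For the ``moreover'' clause I would argue directly with conductors, using the inclusion $\Q_{p^f}\sbs\Q_{pm}(\chi)$ just established. It gives $c(\Q_{p^f})\mid c(\Q_{pm}(\chi))=\operatorname{lcm}\bigl(c(\Q_{pm}),c(\chi)\bigr)$, and since $c(\Q_{p^f})$ is a power of $p$ it must divide the $p$-part of the right-hand side. Now $c(\Q_{pm})_p=p$ when $p$ is odd, while $c(\Q_{2m})_2=1$ because $\Q_{2m}=\Q_m$ for odd $m$; and $c(\chi)_p=p^a$. Hence $c(\Q_{p^f})$ divides $p^{\max(1,a)}$ when $p$ is odd, and divides $2^{a}$ when $p=2$. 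For $p$ odd this reads $f\le \max(1,a)$, which forces $f\le a$ unless $a=0$ and $f=1$. For $p=2$ one recalls that $\chi$ is either $2$-rational (so $a=0$) or has $a\ge 2$, and that $c(\Q_{2^f})=2^f$ for $f\ge 2$ while $c(\Q_{2^f})=1$ for $f\le 1$; checking these few cases again gives $f\le a$ except precisely when $f=1$ and $a=0$. This completes the plan.
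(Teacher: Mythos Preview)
Your proof is correct. Both arguments are Galois-theoretic, but the route differs from the paper's. The paper first replaces $n$ by $mn$ to assume $m\mid n$, then works with the field lattice: setting $F=\Q_n$, $K=\Q_m$, $L=\Q_{p^a m}$, $J=\Q_{pm}(\chi)$, it observes that $F\cap L=K\sbs J\sbs L$, invokes Lemma~2.6(i) of \cite{NT} to get $\langle F,J\rangle\cap L=J$, and concludes $\Q_{p^f}\sbs J$ since $\Q_{p^f}\sbs \langle F,J\rangle=\Q_{pn}(\chi)$ and $\Q_{p^f}\sbs L$ (the latter requiring $f\le a$, which is established first). Your approach instead parametrizes the Galois group explicitly as $(\Z/p^b)^\times\times(\Z/M)^\times$ and exploits directly that the action on $\chi$ factors through $(\Z/p^a)^\times\times(\Z/m)^\times$, allowing the clean ``replace $y$ by $1$'' substitution; this is self-contained and avoids the external citation. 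You also reverse the logical order, deducing the bound $f\le a$ from the containment via conductors rather than needing it as an input. The paper's argument is terser once the lattice lemma is in hand; yours is more elementary and makes the mechanism visible.
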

    
     \begin{proof}
    By replacing $n$ by $mn$, we we may assume that $m$ divides $n$. 
    If $a=0$ and $f=1$ then $\Q_p\sbs \Q_{pm}(\chi)$.
    
    Hence we may assume that $a\ge 1$. Then $a \ge 2$ if $p=2$. In either case
    $\Q_{p^f}\sbs \Q_{pn}(\chi)\sbs \Q_{p^an}(\chi)=\Q_{p^a n}$ because $m$ divides $n$, so $f \leq a$.
    If $p=2$, we may also assume that 
    $f\ge 2$, because otherwise the result is trivial.
    
    Write $F=\Q_n$, $K=\Q_m$, $L=\Q_{p^a m}$ and $E=\langle F, L \rangle=\Q_{p^a n}$.
    We have that $F \cap L=K$. Let $J=\Q_{pm}(\chi)$, so that $K \sbs J \sbs L$.  Let  $M=\Q_{pn}(\chi)=\langle F, J\rangle$.
 Since $\Q_{p^f}\sbs  M \sbs \Q_{p^a n}$,  
  we have that $f \le a$. 
 Now, $\Q_{p^f} \sbs M \cap L=J$, by Lemma 2.6(i) of \cite{NT}, for instance.  
    \end{proof}

%
%

\begin{lem}\label{triv}
Let $p$ be a prime. Suppose that $\chi$ and $\psi$ are characters of groups $G$ and $H$.
Suppose that $\Q_{pn}(\chi)=\Q_{pn}(\psi)$ for some   $n$ not divisible by $p$.
\begin{enumerate}
\item
If $n$ divides $m$, then $\Q_{pm}(\chi)=\Q_{pm}(\psi)$.

\item
If $p=2$, or $p$ is odd and  $c(\chi)_p,c(\psi)_p \ge p$, then $c(\chi)_p=c(\psi)_p$.
\end{enumerate}
\end{lem}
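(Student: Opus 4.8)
The plan is to deduce both parts from the two elementary facts recalled in this section: that $\Q_{rn}(\chi)=\langle \Q_{rn},\Q(\chi)\rangle$, and that the conductor of a compositum $\langle F_1,F_2\rangle$ of Abelian number fields is the least common multiple of $c(F_1)$ and $c(F_2)$. To these I add one observation: for a prime $p$ not dividing $n$ one has $c(\Q_{pn})_p=p$ when $p$ is odd, whereas $c(\Q_{2n})_2=1$, since $\Q_{2n}=\Q_n$ for $n$ odd.

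For part (i): since $n$ divides $m$ we have $\Q_{pn}\sbs\Q_{pm}$, so $\Q_{pm}(\chi)=\langle\Q_{pm},\Q(\chi)\rangle=\langle\Q_{pm},\Q_{pn},\Q(\chi)\rangle=\langle\Q_{pm},\Q_{pn}(\chi)\rangle$, and likewise $\Q_{pm}(\psi)=\langle\Q_{pm},\Q_{pn}(\psi)\rangle$. The hypothesis $\Q_{pn}(\chi)=\Q_{pn}(\psi)$ then gives $\Q_{pm}(\chi)=\Q_{pm}(\psi)$ immediately.

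For part (ii): applying the compositum formula to $\Q_{pn}(\chi)=\langle\Q_{pn},\Q(\chi)\rangle$ shows that $c(\Q_{pn}(\chi))_p=\max\{c(\Q_{pn})_p,\,c(\chi)_p\}$. When $p=2$ this equals $\max\{1,c(\chi)_2\}=c(\chi)_2$; when $p$ is odd and $c(\chi)_p\ge p$ it equals $\max\{p,c(\chi)_p\}=c(\chi)_p$. The same computation applies to $\psi$. Since equal fields have equal conductors, and hence equal $p$-parts of conductors, the assumption $\Q_{pn}(\chi)=\Q_{pn}(\psi)$ forces $c(\chi)_p=c(\psi)_p$ under the stated hypotheses.

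The only point needing a moment's care is the behaviour of $\Q_{2n}$: for $p=2$ one must use that $\Q_{2n}=\Q_n$ with $n$ odd, so that adjoining $\Q_{2n}$ leaves the $2$-part of the conductor untouched; this is exactly why no lower bound on $c(\chi)_2,c(\psi)_2$ is required in that case. For odd $p$ the hypothesis $c(\chi)_p,c(\psi)_p\ge p$ is genuinely needed, since adjoining $\Q_p$ would raise the $p$-part of the conductor to $p$ and thereby erase the distinction between a $p$-rational character and one with $p$-part $p$. I anticipate no real difficulty: the lemma is essentially a bookkeeping statement about conductors of composita, and alternatively part (ii) could be extracted from Lemma~\ref{useful} applied to each of $\chi$ and $\psi$.
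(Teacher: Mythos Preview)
Your proof is correct and essentially follows the paper's approach. Part (i) is identical in substance to the paper's argument, just phrased in compositum notation rather than by adjoining roots of unity. For part (ii) the paper argues by containment that $c(\psi)_p \mid c(\chi)_p$ and then appeals to symmetry, whereas you compute the $p$-part of the conductor of the common field $\Q_{pn}(\chi)=\Q_{pn}(\psi)$ directly via the lcm formula; this is a minor cosmetic variation of the same conductor bookkeeping, and your observation that $c(\Q_{2n})_2=1$ for odd $n$ is exactly what makes the $p=2$ case go through without the extra hypothesis.
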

    
    \begin{proof}
  To prove part (i) just notice that 
    $$\Q_{pm}(\chi)=\Q(\zeta_p,\zeta_m, \chi)=\Q(\zeta_p,\zeta_n, \zeta_m, \chi)=\Q_{np}(\chi)(\zeta_m)=\Q_{np}(\psi)(\zeta_m)=\Q_{pm}(\psi) .$$

    To prove part (ii) notice that $ \Q(\psi)\sbs \Q_{pn}(\chi)\sbs \Q_{c(\chi)_pm}$ with $m=nc(\chi)_{p'}$.
    In particular $c(\psi)_p$ divides $c(\chi)_p$. By reversing the roles played by $\chi$ and $\psi$ we obtain the result.
    \end{proof}

    \section{Fields and Height Zero Characters}
           
       Our notation for blocks follows \cite{N2}.
We will frequently use the following facts on height zero characters. 

\begin{thm}\label{ht0}
Let $B$ be a $p$-block of a finite group $G$, and let $\chi \in \irr B$ with height zero.
\begin{enumerate}
\item If $\psi^G=\chi$, where $\psi \in \irr H$ of some subgroup $H$ of $G$, then
$\psi$ has height zero in its $p$-block, and any defect group of the block of $\psi$
is a defect group of $B$.

\item
If $N \nor G$ and $\theta \in \irr N$ is under $\chi$, then $\theta$ has height zero.
\end{enumerate}
\end{thm}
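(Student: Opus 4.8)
The plan is to deduce both statements from standard facts about induced $p$-blocks, from Knörr's results on blocks and normal subgroups, and from the fact that heights are non-negative (Brauer).

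\emph{Part (i).} Let $b_{0}$ be the $p$-block of $\psi$, so $\psi\in\irr{b_{0}}$. Since $\psi^{G}=\chi$ is irreducible, the induced block $b_{0}^{G}$ is defined and $b_{0}^{G}=B$ (see \cite{N2}); in particular a defect group $Q$ of $b_{0}$ is contained, up to $G$-conjugacy, in a defect group $D$ of $B$, so $|Q|\le|D|$. Write $|G|_{p}=p^{a}$ and $|H|_{p}=p^{h}$. Because $\chi$ has height zero, $\chi(1)_{p}|D|=p^{a}$; combining this with $\chi(1)=|G:H|\,\psi(1)$ and $|G:H|_{p}=p^{a-h}$ gives $\psi(1)_{p}|D|=p^{h}$. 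On the other hand, since the height of $\psi$ in $b_{0}$ is non-negative, $\psi(1)_{p}|Q|\ge p^{h}$. Hence $|D|\le|Q|$, so $|Q|=|D|$ and $\psi(1)_{p}|Q|=p^{h}$: that is, $\psi$ has height zero. As $Q\subseteq D$ and $|Q|=|D|$ we get $Q=D$, and since the defect groups of $B$ form a single $G$-conjugacy class, every defect group of $b_{0}$ is a defect group of $B$.

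\emph{Part (ii).} By Clifford's theorem the irreducible constituents of $\chi_{N}$ form a single $G$-orbit, so they share a degree and have $G$-conjugate blocks, hence equal defect; it is therefore enough to prove that one of them has height zero. I would take $T=G_{\theta}$ and let $\psi\in\irr T$ be the Clifford correspondent of $\chi$ over $\theta$, so that $\psi^{G}=\chi$. By part (i) the character $\psi$ has height zero in its block $b_{0}$ of $T$, and $b_{0}$ covers the block $b$ of $\theta$; moreover $\theta$ is now $T$-invariant. This reduces the claim to the case where $\theta$ is $G$-invariant, and in that form it is a theorem of Knörr: if $B$ covers the $G$-invariant block $b=\mathrm{bl}(\theta)$ and $\chi\in\irr B$ lies over $\theta$ and has height zero, then $\theta$ has height zero (see \cite{N2}).

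The defect arithmetic in (i) is routine once the two block-induction inputs ($b_{0}^{G}=B$ and the containment of defect groups) are granted, and the Clifford reduction in (ii) is formal. The substantive point is the invariant case of (ii): it rests on Knörr's analysis of defect groups of covered blocks (there is a defect group $D$ of $B$ with $D\cap N$ a defect group of $b$), together with Fong's reduction theorems, and I would quote it rather than reprove it. Thus the only genuine obstacle is this appeal to nontrivial block theory; everything else is bookkeeping with $p$-parts.
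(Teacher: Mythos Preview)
Your argument is correct. The paper itself does not give a proof at all: it simply cites \cite{NS}, attributing part (i) to Proposition~2.5(e) there and part (ii) to Murai via Proposition~2.5(a) there. Your treatment of part (i) unpacks exactly the defect-group-containment plus $p$-part arithmetic that underlies that reference, and your Clifford reduction in part (ii) to the $G$-invariant case is the standard move. One small point of attribution: the invariant case of (ii) that you quote is due to Murai rather than Kn\"orr (Kn\"orr's contribution is the defect-group statement $D\cap N\in\mathrm{Def}(b)$ that feeds into it), so you may want to adjust the citation, but mathematically there is no gap.
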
    

\begin{proof}
Part (i) is Proposition 2.5(e) of \cite{NS}. Part (ii) is due to M. Murai, and is 
Proposition 2.5(a) of \cite{NS}.
\end{proof}
    
    \begin{thm}\label{cliff}
    Let $p$ be a prime, and suppose that $\chi \in \irr G$  has height zero in its $p$-block.
    Let $N \nor G$, let $\theta \in \irr N$ be under $\chi$, and let $\psi \in \irr{T|\theta}$
    be the Clifford correspondent of $\chi$ over $\theta$.
    Then $\psi$ and $\theta$ have height zero. Also,  $\Q_{pn}(\chi)=\Q_{pn}(\psi)$, where $n=|G|_{p'}$. Therefore, if $p=2$ or $p$ odd and $c(\chi)_p \ge p$, then $c(\chi)_p=c(\psi)_p$.    \end{thm}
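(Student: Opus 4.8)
My plan is to establish the three assertions in order, the substance being the equality of fields. That $\psi$ and $\theta$ have height zero is immediate from Theorem~\ref{ht0}: since $\psi^G=\chi$, part~(i) gives that $\psi$ has height zero and that a defect group $D$ of $B$ is also a defect group of the block of $\psi$ (so $D\le T$, hence $DN\le T$); and $\theta$ has height zero by part~(ii). For the inclusion $\Q_{pn}(\chi)\sbs\Q_{pn}(\psi)$ I would simply use that every value of an induced character lies in the field of values of the inducing character: from $\chi=\psi^G$ we get $\Q(\chi)\sbs\Q(\psi)$, hence $\Q_{pn}(\chi)\sbs\Q_{pn}(\psi)$. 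Note this already shows that if $\psi$ is $p$-rational then so is $\chi$.

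The substantial direction is $\Q_{pn}(\psi)\sbs\Q_{pn}(\chi)$, which I would prove by a Galois argument. Fix $\sigma\in\Gal(\Q_{|G|}/\Q)$ fixing $\Q_{pn}(\chi)$; the goal is $\psi^\sigma=\psi$. Since $\sigma$ fixes $\Q(\chi)$ we have $\chi^\sigma=\chi$. Since $\sigma$ fixes $\Q_{pn}\supseteq\Q_n$ with $n=|G|_{p'}$, it fixes every irreducible Brauer character of $N$ (their values lie in $\Q_n$), so $\sigma$ stabilises every $p$-block of $N$. Then $\theta^\sigma$ is an irreducible constituent of $(\chi^\sigma)_N=\chi_N$ lying in the same $p$-block of $N$ as $\theta$; in particular $\theta^\sigma$ is $G$-conjugate to $\theta$. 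The crucial point is that in fact $\theta^\sigma=\theta$. Granting this, $\psi^\sigma\in\irr{T\mid\theta}$ satisfies $(\psi^\sigma)^G=(\psi^G)^\sigma=\chi$, so $\psi^\sigma$ is the Clifford correspondent of $\chi$ over $\theta$; by uniqueness of the Clifford correspondent, $\psi^\sigma=\psi$, giving $\Q(\psi)\sbs\Q_{pn}(\chi)$ and completing the proof that $\Q_{pn}(\chi)=\Q_{pn}(\psi)$.

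The main obstacle is therefore to show $\sigma$ fixes $\theta$; equivalently, that $\Q(\theta)\sbs\Q_{pn}(\chi)$. Here the $p'$-part is automatic, since $c(\theta)_{p'}$ divides $|N|_{p'}$, which divides $n$; what is really needed is a bound on $c(\theta)_p$ in terms of $c(\chi)_p$, and this is exactly where the height-zero hypothesis on $\chi$ enters (it fails for general $\chi$, e.g.\ for $\chi$ rational of degree divisible by $p$). I would obtain it by a local analysis: pass, via a Fong--Reynolds reduction, to the stabiliser in $G$ of the block $b$ of $\theta$ (this preserves heights, defect groups and the relevant fields of values), use Knörr's theorem (see \cite{N2}) to identify $D\cap N$ as a defect group of $b$, and then combine the two height-zero identities $\chi(1)_p=|G|_p/|D|$ and $\theta(1)_p=|N|_p/|D\cap N|$ together with $\chi(1)=e\,[G:T]\,\theta(1)$ to force the ($p$-)group $\Gal(\Q_{|G|}/\Q_{pn}(\chi))$ to fix $\theta$. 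This parallels the treatment of characters of $p'$-degree in \cite{NT}, and I expect it to be the technically heaviest step.

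Finally, the last sentence of the theorem follows formally from the field equality. If $p=2$, Lemma~\ref{triv}(ii) applied to $\Q_{2n}(\chi)=\Q_{2n}(\psi)$ gives $c(\chi)_2=c(\psi)_2$ directly. If $p$ is odd and $c(\chi)_p\ge p$, then $\psi$ cannot be $p$-rational, since otherwise $\Q(\chi)\sbs\Q(\psi)\sbs\Q_n$ would force $c(\chi)_p=1$; hence $c(\psi)_p\ge p$ as well, and Lemma~\ref{triv}(ii) again gives $c(\chi)_p=c(\psi)_p$.
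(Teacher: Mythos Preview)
Your treatment of the easy assertions is correct and matches the paper: $\psi,\theta$ have height zero by Theorem~\ref{ht0}, the inclusion $\Q_{pn}(\chi)\sbs\Q_{pn}(\psi)$ follows from $\chi=\psi^G$, and the last sentence is exactly Lemma~\ref{triv}(ii) together with the observation that $p$-rationality of $\psi$ forces $p$-rationality of $\chi$. Your Galois strategy for the reverse inclusion is also the right one: once you know $\theta^\sigma=\theta$, uniqueness of the Clifford correspondent gives $\psi^\sigma=\psi$.

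The gap is precisely at the step ``$\theta^\sigma=\theta$''. Your Fong--Reynolds reduction to $G_b$ is legitimate (induction $\Irr(B_0)\to\Irr(B)$ is a $\sigma$-equivariant bijection because $\sigma$ fixes $b$, so $\Q_{pn}(\chi_0)=\Q_{pn}(\chi)$), but after that the height identities you list do \emph{not} force the $p$-group $\Gal(\Q_{|G|}/\Q_{pn}(\chi))$ to fix $\theta$. The parallel with the $p'$-degree case in \cite{NT} breaks down: there one has $|G:T|$ coprime to $p$, so a $p$-element acting on the $G$-orbit of $\theta$ must fix $\theta$; but for height zero characters in blocks of non-maximal defect, $|G:T|_p$ divides $\chi(1)_p=|G:D|_p$ and can very well be larger than $1$. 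Your identities only yield $e_p\,|G:T|_p=|G:DN|_p$, which puts no useful bound on $|G:T|_p$ and does not obstruct a nontrivial $p$-power $\sigma$-orbit on the constituents of $\chi_N$. So the ``technically heaviest step'' you flag is not just heavy --- as written it does not go through.

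The paper gets around this by a different route: it argues by induction on $|G:N|$ using the \emph{semi-inertia} subgroup $T^*=\{g\in G:\theta^g=\theta^\tau\text{ for some }\tau\in\Gal(\Q(\theta)/\Q)\}$, exploiting $\Q(\chi)=\Q(\psi^{T^*})$ to reduce first to $T^*=G$ (so $T\nor G$ with $G/T$ abelian) and then, by a further induction, to the case $T=N$, $\psi=\theta$, and $|G/N|$ prime. At that point the argument is short: a nontrivial $\sigma$ is a $p$-element, $\theta^\sigma=\theta^g$ with $g\notin N$, whence $|G/N|=p$ and $b=b^\sigma=b^g$ is $G$-invariant; then Corollary~9.6 and Corollary~9.18 of \cite{N2} force the height zero character $\theta$ to be $G$-invariant, contradicting $T=N$. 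The height zero hypothesis is used exactly at this last block-theoretic step, not via the degree identities you propose.
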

    
    \begin{proof}
    We have that $\psi$ and $\theta$ have height zero
    by Theorem \ref{ht0}. We argue by induction on $|G:N|$. Since $\psi^G=\chi$, we have that $\Q(\chi) \sbs \Q(\psi)$. 
    Let $T^*$ be the semi-inertia group of $\theta$ in $G$ consisting of the 
    elements $g \in G$ for which there is some $\sigma \in {\rm Gal}(\Q(\theta)/\Q)$ such that
    $\theta^g=\theta^\sigma$, as in Problem 3.9 of  \cite{N3}. Let $\eta=\psi^{T^*}$.
    Since $\eta^G=\chi$, then we have that $\eta$ has height zero.
    Also, $\Q(\chi)=\Q(\eta)$, $T\nor T^*$ and $T^*/T$ is abelian.
   
    If $T^*<G$, by induction, we have that $\Q_{p|T^*|_{p'}}(\eta)=\Q_{p|T^*|_{p'}}(\psi)$.
    By Lemma \ref{triv}(i), we have that $\Q_{pn}(\psi)=\Q_{pn}(\eta)=\Q_{pn}(\chi)$,
    and we are done.  
   Thus we may assume that $T^*=G$. Then $T \nor G$, and that $G/T$ is abelian.  By induction,
    we may assume that $T=N$, and $\psi=\theta$. If $M$ is a maximal normal subgroup of $G$ with $N<M<G$, then
    again by induction (and using Lemma \ref{triv}(i)), $\Q_{pn}(\theta^M)=\Q_{pn}(\theta)$. Hence it is enough to prove
    the statement in the case where $G/N$ has prime order.
    
   Now $G/N$ has prime order.
    Let $\sigma \in {\rm Gal}(\Q_{pn}(\theta)/\Q_{pn}(\chi))$.  We want to show $\sigma$ is trivial. Assume that $\sigma\neq 1$. Notice that $\sigma$ is a $p$-element,
    since ${\rm Gal}(\Q_{|G|}/\Q_{pn})\leq {\rm Gal}(\Q_{|G|}/\Q_{p n})\cong {\rm Gal}(\Q_{|G|_p}/\Q_p)$ is a $p$-group. 
    By Clifford's theorem, we have that $\theta^\sigma=\theta^g$ for some
    $g \in G$.  Also $\theta^\sigma\neq \theta$ because $\sigma$ is not trivial.
    In particular $\langle gN \rangle=G/N$ is a group of  order $p$.    
   Let $b$ be the block of $\theta$. Since $\sigma$ fixes $p^\prime$-roots of unity,
    it follows that $b^\sigma=b$. (Use, for instance, Theorem 3.19 of \cite{N2}.) Then $b^g=b$ and $b$ is $G$-invariant. Then we apply Corollary 9.6 and Corollary 9.18 of \cite{N2},
    and conclude that $\theta$ is $G$-invariant, a contradiction. 
    
    The second part of the statement follows from Lemma \ref{triv}. Notice if $p$ is odd then $c(\chi)_p\ge p$ implies that $c(\psi)_p\ge 1$ because if $c(\psi)_p=1$, then $\psi$ is $p$-rational, and $\chi=\psi^G$ is also
    $p$-rational. 
    \end{proof}
    
    Notice that the hypothesis on the odd case of the second statement of the above theorem is necessary: if
    $p=3$,  $\chi \in \irr{{\sf S}_3}$ has degree 2 and $\psi \in \irr{N}$ is under $\chi$ with $|N|=3$, then $c(\chi)_3=1$ but $c(\psi)_3=3$. 
    \medskip
    \begin{cor}\label{cor}
    Let $N\nor G$, let $\chi \in \irr G$ of  height zero in its $p$-block, and let $\theta$ be an irreducible constituent of $\chi_N$.
    If $p$ is odd, assume that $c(\chi)_p\ge p$. Then $c(\theta)_p \le c(\chi)_p$.
    In particular, if $p=2$ and $\chi$ is 2-rational, then $\theta$ is $2$-rational. 
    \end{cor}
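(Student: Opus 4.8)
The plan is to reduce everything to the Clifford correspondent and then invoke Theorem~\ref{cliff}. Let $T$ be the inertia group of $\theta$ in $G$ and let $\psi\in\irr{T|\theta}$ be the Clifford correspondent of $\chi$ over $\theta$, so that $\psi^G=\chi$. By Theorem~\ref{cliff}, both $\psi$ and $\theta$ have height zero and $\Q_{pn}(\chi)=\Q_{pn}(\psi)$ for $n=|G|_{p'}$; moreover, since by hypothesis either $p=2$, or $p$ is odd with $c(\chi)_p\ge p$, the same theorem yields $c(\chi)_p=c(\psi)_p$.

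The second step is to compare $\psi$ with $\theta$ directly. As $\theta$ is invariant in $T$, we have $\psi_N=e\theta$ with $e=[\psi_N,\theta]$, so $\theta(x)=e^{-1}\psi(x)$ for every $x\in N$; hence $\Q(\theta)\sbs\Q(\psi)$, and therefore $c(\theta)$ divides $c(\psi)$. Taking $p$-parts, $c(\theta)_p$ divides $c(\psi)_p=c(\chi)_p$, which gives $c(\theta)_p\le c(\chi)_p$, as desired. For the final assertion, specialize to $p=2$ and $\chi$ $2$-rational: then $c(\chi)_2=1$, so $c(\theta)_2$ divides $1$ and $\theta$ is $2$-rational.

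There is no real obstacle here beyond correctly setting up the reduction: the content is entirely carried by Theorem~\ref{cliff}. The one point that must not be overlooked is that the hypothesis ``$c(\chi)_p\ge p$ when $p$ is odd'' is exactly what is needed to apply the second conclusion of Theorem~\ref{cliff} (equivalently, to guarantee $c(\psi)_p=c(\chi)_p$ and not merely a one-sided divisibility); as the $\mathsf S_3$ example following that theorem shows, this assumption cannot be removed from the statement.
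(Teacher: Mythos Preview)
Your proof is correct and is essentially the same as the paper's. The paper phrases the reduction as ``by Theorem~\ref{cliff}, we may assume that $\theta$ is $G$-invariant'' and then observes $\Q(\theta)\sbs\Q(\chi)$; you have simply unpacked that reduction explicitly by working with $(T,\psi)$ instead of renaming it $(G,\chi)$.
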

    
    \begin{proof}
    By Theorem \ref{cliff}, we may assume that $\theta$ is $G$-invariant. Then $\chi_N=e\theta$, $\Q(\theta) \sbs \Q(\chi)$, and
    the statement is clear.
    \end{proof}
     \medskip
     
     Of course, Corollary \ref{cor} is about height zero characters. (Consider, for instance, $G={\sf D}_8$, $\chi \in \irr G$ of  degree 2,
     and $N$ a cyclic subgroup of $G$ of order 4.)
     
     \medskip
     
   Next we prove the normal defect case of Theorem A. We first need a lemma. Suppose that  $\chi \in \irr G$ lies in a block $B$ with defect group $D \nor G$. 
    Let $C=\cent GD$. Let $b$ a block of $CD$ covered by $B$. By Corollary 9.21 of \cite{N2},
    we have that $B=b^G$ is the only block of $G$ covering $b$. By Theorem 9.26 of \cite{N2},
    we have that $b$ has defect group $D$.  By Theorem 9.12 of \cite{N2}, there is a unique irreducible
    character $\theta \in \irr{b}$ such that $D \sbs \ker\theta$. This character has defect zero, viewed as a character 
    of $CD/D$, and it is called the canonical character of $b$, which is
    uniquely defined up to $G$-conjugacy. The irreducible characters of $b$ are described in Theorem 9.12 of \cite{N2}.

    \begin{lem}\label{qp}
    Suppose that $\chi \in \irr G$ has height zero and belongs to a $p$-block $B$
    with a normal defect group $D$. Let $C=\cent GD$ and $Z=\zent D$.  If $\chi_{CD}$
    is homogeneous, then $\chi_D$ is homogeneous, the canonical character $\theta \in \irr{CD/D}$
    of $B$ is $G$-invariant, and $G/CD$ is a $p'$-group.
    If 
    $\lambda$ the irreducible
    constituent of $\chi_D$, then $\lambda$ is linear and $c(\chi)_p=c(\lambda)$. 
    \end{lem}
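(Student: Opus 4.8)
\emph{Strategy.} Using $CD\nor G$ and Clifford's theorem, the hypothesis that $\chi_{CD}$ is homogeneous means exactly that $\chi_{CD}=e\varphi$ for a $G$-invariant $\varphi\in\irr{CD}$. Since $\chi\in\irr B$ and $CD\nor G$, $\varphi$ lies in a block of $CD$ covered by $B$; being $G$-invariant, this block is $G$-invariant, hence it is the (unique) block $b$ with $B=b^{G}$, so $\varphi\in\irr b$, and $\varphi$ has height zero by Theorem~\ref{ht0}. Now the three ``soft'' conclusions. First, the canonical character $\theta$ is the unique member of $\irr b$ with $D\sbs\ker\theta$, and since $D\nor G$ and $G$ permutes $\irr b$, the character $\theta^{g}$ shares this property for every $g\in G$, forcing $\theta^{g}=\theta$; thus $\theta$ is $G$-invariant. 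Second, as $C=\cent GD$ centralises $D$ and the elements of $D$ act on $\irr D$ only by inner automorphisms, every character of $D$ is $CD$-invariant, so $\varphi_{D}$ is homogeneous and hence so is $\chi_{D}=e\varphi_{D}$. Writing $\lambda\in\irr D$ for its constituent, Theorem~\ref{ht0}(ii) applied to $D\nor G$ shows $\lambda$ has height zero; since $D$ is a $p$-group this gives $\lambda(1)=1$. Then $\lambda$ is linear, $\chi_{D}=\chi(1)\lambda$, and $\lambda$ is $G$-invariant.

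For the assertion that $G/CD$ is a $p'$-group: since $D\nor G$ we have $\norm GD=G$ and $\cent GD=C$, and as $b$ is $G$-invariant the quotient $G/CD$ is precisely the inertial quotient of $B$. It is standard that the inertial quotient of a $p$-block is a $p'$-group, so $G/CD$ is a $p'$-group. (This can also be extracted from the structure theory of blocks with normal defect group, using that $b$ is a nilpotent block of $CD$.)

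It remains to prove $c(\chi)_{p}=c(\lambda)$, the substantive point. One divisibility is immediate: from $\chi_{D}=\chi(1)\lambda$ we get $\Q(\lambda)\sbs\Q(\chi)$, so $c(\lambda)\mid c(\chi)$; as $\lambda$ is linear of $p$-power order, $c(\lambda)$ is a power of $p$ (in fact $c(\lambda)=o(\lambda)$, unless $p=2$ and $o(\lambda)=2$, when $c(\lambda)=1$), hence $c(\lambda)\mid c(\chi)_{p}$. For the reverse divisibility I would pass to a quotient. Put $N=\ker\lambda$; this is a $p$-subgroup of $D$, normal in $G$ since $\lambda$ is $G$-invariant, and contained in $\ker\chi$ because $\chi_{D}=\chi(1)\lambda$. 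In $\bar G=G/N$ the character $\bar\chi\in\irr{\bar G}$ lies in the block $\bar B$ of $\bar G$ dominated by $B$, which has defect group $\bar D=D/N$; one checks that $\bar\chi$ has height zero in $\bar B$, that $\bar\chi_{\bar D}=\bar\chi(1)\bar\lambda$ with $\bar\lambda$ \emph{faithful}, so $\bar D$ is cyclic of order $o(\lambda)$, and that $\Q(\chi)=\Q(\bar\chi)$.

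Finally evaluate $\bar\chi$ on $g\in\bar G$: if the $p$-part $g_{p}$ is not $\bar G$-conjugate to an element of $\bar D$, then no block of $\cent{\bar G}{g_{p}}$ is a Brauer correspondent of $\bar B$, so $\bar\chi(g)=0$ (a consequence of Brauer's second main theorem); otherwise $o(g_{p})\mid|\bar D|$ and $o(g_{p'})\mid|\bar G|_{p'}$, so $\bar\chi(g)$ is a sum of roots of unity of order dividing $|\bar D|\,|\bar G|_{p'}$ and lies in $\Q_{|\bar D|\,|\bar G|_{p'}}$. Hence $\Q(\bar\chi)\sbs\Q_{|\bar D|\,|\bar G|_{p'}}$, and since $|\bar D|$ is a power of $p$ we get $c(\chi)_{p}=c(\bar\chi)_{p}\mid|\bar D|=o(\lambda)$. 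With $c(\lambda)\mid c(\chi)_{p}$ this yields $c(\chi)_{p}=c(\lambda)$ whenever $c(\lambda)=o(\lambda)$; in the only remaining case, $p=2$ and $o(\lambda)=2$, we have $c(\chi)_{2}\mid 2$, and since no conductor has $2$-part equal to $2$, $c(\chi)_{2}=1=c(\lambda)$. The step I expect to cause the most trouble is the $p'$-ness of $G/CD$: it is the one conclusion that is not formal and that genuinely uses block theory beyond Theorem~\ref{ht0}, whereas the homogeneity of $\chi_{D}$, the linearity of $\lambda$, the $G$-invariance of $\theta$ and the conductor equality all follow by the elementary bookkeeping above once the vanishing of block characters off the defect group is in hand.
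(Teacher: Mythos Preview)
Your proof is correct, and in several places cleaner than the paper's, but the overall route to the conductor identity $c(\chi)_p=c(\lambda)$ is genuinely different.

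For the ``soft'' conclusions the paper works with the explicit description $\eta=\theta_\lambda$ from \cite[Theorem~9.12]{N2} and its value formula: it reads off $\eta_D=\theta(1)\lambda$, then proves $\theta$ is $G$-invariant by a direct computation of $\theta(x^g)$ using that $\eta$ and $\lambda$ are $G$-invariant, and finally quotes \cite[Theorem~9.22]{N2} to deduce that $G/CD$ is a $p'$-group. Your uniqueness argument for the $G$-invariance of $\theta$ (the canonical character is the unique member of $\irr b$ with $D$ in its kernel, $b$ and $D$ are $G$-stable, done) is shorter and avoids the value computation entirely; and your identification of $G/CD$ with the inertial quotient gives the $p'$-statement without passing through $\theta$.

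For $c(\chi)_p=c(\lambda)$ the paper again uses the explicit formula for $\eta=\theta_\lambda$ to see $c(\eta)_p=c(\lambda)$, and then invokes \cite[Lemma~4.2(ii)]{NT} to pass from $\eta$ to $\chi$ across the $p'$-quotient $G/CD$. Your approach bypasses both the external lemma and the formula: you mod out $\ker\lambda$ and use Brauer's second main theorem to bound the $p$-part of the conductor of $\bar\chi$ by $|\bar D|=o(\lambda)$. This is more self-contained and arguably more conceptual; the paper's route is shorter once one is willing to import the value formula and the result from \cite{NT}. (A small remark: for your vanishing argument you only need that a defect group of $\bar B$ is contained in $\bar D$, which is Theorem~9.9 of \cite{N2}; equality holds because $N\le D$ is a normal $p$-subgroup, but you do not actually need it.)
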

    
    \begin{proof}
     Let $\eta \in \irr{CD}$ be under $\chi$. By  Theorem \ref{ht0}, we have that $\eta$ has height zero.
    By Theorem 9.12 of \cite{N2}, we know that we can write $\eta=\theta_\lambda$,
       where $\lambda \in \irr D$ is linear
       and $\theta \in \irr{CD/D}$ has defect zero. Moreover, $\eta(x)=0$ if $x_p \not\in D$, and $\eta(x)=\theta(x_{p'})\lambda(x_p)$ if $x_p \in D$.  Thus $\eta_D=\theta(1)\lambda$. Since  $\chi_{CD}$
       is homogeneous, it follows that $\chi_D$ is homogeneous then $\lambda$ and $\eta$ are $G$-invariant.
        We claim that $\theta\in\irr{CD/D}$ is $G$-invariant. View $\theta$ as a character of $C/Z$. 
       Let $x \in C$ and $g \in G$. Since $\theta \in \irr{C/Z}$ has $p$-defect zero, if $xZ$ is $p$-singular, then $\theta(x)=0=\theta(x^g)$ because $x^gZ$ is also $p$-singular.
       If $xZ$ is $p$-regular, then $x^gZ$ is also $p$-regular, $xZ=x_{p'}Z$ and $x^gZ=(x_{p'})^gZ$. Since $\eta$ is $G$-invariant, we have that
        $\theta(x_{p'})\lambda(x_p)=\eta(x)=\eta(x^g)=\theta((x_{p'})^g)\lambda((x_{p'})^g)$. Since $\lambda$ is $G$-invariant
       and linear, we deduce that $\theta(x_{p'})=\theta((x_{p'})^g)$. Now $\theta(x)=\theta(x_{p'})=\theta((x_{p'})^g)=\theta(x^g)$ because $Z$ is contained in the kernel of $\theta$, and we deduce that $\theta$ is $G$-invariant. Therefore, we have that $G/CD$ has order coprime to $p$ by Theorem 9.22 of \cite{N2}.
       
      Since $\chi_{CD}=e\eta$ and $\eta_D=\theta(1)\lambda$, we have that $\Q_{c(\lambda)} \sbs \Q(\eta)$. Now, $\theta$ is $p$-rational, because it is a defect zero character, and therefore,
      $\Q(\theta) \sbs \Q_m$, where $m$ is a $p'$-number.  Then, using the formula for the values of $\eta$, we
      have that  $\Q_{c(\lambda)} \sbs \Q(\eta) \sbs \Q_{c(\lambda) m}$. Therefore, $c(\lambda)$ divides $c(\eta)$ which divides $c(\lambda)m$, implying
      that $c(\eta)_p=c(\lambda)$.
     We apply Lemma 4.2(ii) of \cite{NT}, and we get that $c(\chi)_2=c(\eta)_2$ if $p=2$ and
       $c(\chi)_p=c(\eta)_p$ if $p$ is odd and $c(\lambda)=c(\eta)_p>1$.
      If $p$ is odd and $c(\lambda)=c(\eta)_p=1$, then $\lambda=1_D$.
      Therefore $\eta=\theta$, and $\chi$ has $p$-defect zero, so $\chi$ is $p$-rational and $c(\chi)_p=1$.
      In any case, we conclude that $c(\chi)_p=c(\lambda)$. 
           \end{proof}

Next we prove Theorem A, and one of the containments of Conjecture D,  in the case of blocks with a normal defect group.

            \medskip

    \begin{lem}\label{normald}
    Let $\chi \in \irr B$ of height zero,
 where $B$ is a $p$-block with  a normal defect group $D$.
 Write $c(\chi)=p^am$, where $m$ is not divisible by $p$.
 Then
 $\Q_{p^a} \sbs \Q_{pn}(\chi)$.
  \end{lem}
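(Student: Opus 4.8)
The plan is to reduce, via Clifford theory, to the situation covered by Lemma \ref{qp} --- namely the case where $\chi$ restricted to $CD$ is homogeneous, with $C=\cent GD$ --- and then to extract $\Q_{p^a}$ from the linear character of $D$ lying below $\chi$.

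First I would dispose of the case $a=0$, where $\Q_{p^a}=\Q$ and there is nothing to prove; so assume $a\ge 1$, noting that for $p=2$ this forces $a\ge 2$, since the $2$-part of a character conductor is never exactly $2$. In particular $c(\chi)_p=p^a\ge p$, which is the hypothesis needed to use the conductor statements of Theorem \ref{cliff}. Now let $\eta\in\irr{CD}$ lie under $\chi$, let $T=I_G(\eta)$ be its inertia group in $G$, and let $\psi\in\irr{T|\eta}$ be the Clifford correspondent of $\chi$ over $\eta$, so that $\psi^G=\chi$. Since $CD\nor G$, Theorem \ref{cliff} gives that $\psi$ has height zero, that $c(\psi)_p=c(\chi)_p=p^a$, and that $\Q_{pn}(\psi)=\Q_{pn}(\chi)$ for $n=|G|_{p'}$. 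Moreover, by Theorem \ref{ht0}(i) a defect group of the block of $\psi$ is a defect group of $B$, hence is $D$ itself (as $D\nor G$), and $D\nor T$; also $\cent TD=\cent GD=C$ because $C\le CD\le T$, and $\psi_{CD}=e\eta$ is homogeneous ($\eta$ being $T$-invariant). Thus $(T,\psi)$ satisfies the hypotheses of Lemma \ref{qp}. It therefore suffices to prove the lemma under the extra assumption that $\chi_{CD}$ is homogeneous: granting that, we would obtain $\Q_{p^a}\sbs\Q(\psi)$ (see the next paragraph), so $\Q_{p^a}\sbs\Q(\psi)\sbs\Q_{pn}(\psi)=\Q_{pn}(\chi)$, and Lemma \ref{useful}, applied with $f=a$, would yield $\Q_{p^a}\sbs\Q_{pm}(\chi)$.

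Assume then that $\chi_{CD}$ is homogeneous. By Lemma \ref{qp}, $\chi_D$ is homogeneous with linear constituent $\lambda$ (so $\chi_D=\chi(1)\lambda$) and $c(\lambda)=c(\chi)_p=p^a$. Since $\lambda$ is linear, $o(\lambda)$ is a power of $p$; and a linear character satisfies $c(\lambda)=o(\lambda)$ unless $o(\lambda)=2$, which is impossible here because $a\ge 1$ (and $a\ge 2$ when $p=2$). Hence $o(\lambda)=p^a$, so some value of $\lambda$ is a primitive $p^a$-th root of unity and $\Q_{p^a}\sbs\Q(\lambda)$. Finally, $\lambda(d)=\chi(d)/\chi(1)\in\Q(\chi)$ for every $d\in D$, so $\Q(\lambda)\sbs\Q(\chi)$, and therefore $\Q_{p^a}\sbs\Q(\chi)\sbs\Q_{pm}(\chi)$, as required.

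The point that needs care is the reduction step: one must check that passing to the Clifford correspondent over $CD$ simultaneously preserves the homogeneity hypothesis of Lemma \ref{qp} (here automatic, as $\cent TD\cdot D=CD$ and $\psi_{CD}=e\eta$) and enough control over the conductor of $\chi$ to descend --- through Theorem \ref{cliff} and Lemma \ref{useful} --- back to $\Q_{pm}(\chi)$; one should also keep track of the exceptional identity $c(\lambda)=o(\lambda)/2$ that occurs for linear characters of order $2$ when $p=2$, which is exactly what is ruled out by the reduction $a\ge 2$. Given Lemma \ref{qp}, the homogeneous case itself is the short computation above.
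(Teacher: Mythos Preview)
Your argument is correct and follows the same route as the paper's proof: reduce via the Clifford correspondent over $CD$ to the homogeneous situation of Lemma \ref{qp}, then read off $\Q_{p^a}\sbs\Q(\lambda)\sbs\Q(\chi)$ from the linear constituent $\lambda$ of $\chi_D$. The only difference is organizational: the paper phrases the reduction as an induction on $|G|$ (treating the case $T<G$ by induction and then the case $T=G$ separately), whereas you observe directly that $(T,\psi)$ already has $\psi_{CD}=e\eta$ homogeneous and $\cent TD\cdot D=CD$, so a single Clifford step lands you in the homogeneous case without any inductive hypothesis---a mild streamlining of the same argument.
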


\begin{proof} First notice that we may assume that $a\ge 2$ as otherwise the result trivially holds. 
We argue by induction on $|G|$. 

Write $C=\cent GD$.
Let $\eta \in \irr{CD}$ be an irreducible constituent of $\chi_{CD}$.
Let $T=G_\eta$ be the stabilizer of $\eta$ in $G$ and $\psi \in \irr{T|\eta}$ be
the Clifford correspondent of $\chi$ over $\eta$.
By Theorem \ref{ht0}, we know that $\psi$ has height zero and that $D$
is a defect group of its block. By Theorem \ref{cliff}, 
we have that $\Q_{pn}(\chi)=\Q_{pn}(\psi)$ where $n=|G|_{p'}$
 and $c(\chi)_p=c(\psi)_p$.
Assume that $T<G$. By induction $\Q_{p^a}\sbs \Q_{pc(\psi)_{p'}}(\psi)\sbs \Q_{pn}(\psi)=\Q_{pn}(\chi)$.
By Lemma \ref{useful} we conclude that $\Q_{p^a}\sbs \Q_{pm}(\chi)$.

Hence $T=G$ and we are under the hypotheses of Lemma \ref{qp}. 
If $\lambda \in \irr D$ lies under $\chi$, then $p^a=c(\chi)_p=c(\lambda)$. Notice that
$\chi_D=f\lambda$ and hence $\Q_{p^a}\sbs \Q(\chi)\sbs \Q_{pm}(\chi)$.
\end{proof}

The next results are key to understand the statement of Conjecture D when the group possesses  a normal subgroup of index $p$
(a fundamental step in our reduction theorem).

\begin{lem}\label{elquefaltaba}
Suppose that $G/N$ is a $p$-group, and $b$ is a $G$-invariant $p$-block of $N$ covered 
by a block $B$ of $G$ with defect group $D$. Suppose that $D_0=D\cap N \nor G$. 
Then $G=DN$ and $b$ has a $G$-invariant height zero $p$-rational irreducible character.
\end{lem}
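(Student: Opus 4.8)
The plan is to exploit the hypothesis $D_0 = D \cap N \nor G$ together with block theory over normal subgroups to first pin down the structure $G = DN$, and then to locate the desired character inside $b$.

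First I would establish $G = DN$. Since $G/N$ is a $p$-group, so is $DN/N \cong D/D_0$. If $DN < G$, then $DN/N$ is a proper subgroup of the $p$-group $G/N$, hence is contained in a proper normal subgroup $M/N$ of $G/N$ with $G/M$ of order $p$. Now $B$ covers some block $b_M$ of $M$ that in turn covers $b$. Because $D \subseteq M$, the block $b_M$ has defect group $D$ (a defect group of $B$ contained in $M$ is a defect group of the covered block, by Knörr / Theorem 9.26-type arguments in \cite{N2}), and $B = (b_M)^G$. Since $G/M$ has order $p$ and $B$ has a defect group $D$ with $D \not\subseteq M$... wait — we want a contradiction here: a block of $G$ covering a block of $M$ where $G/M$ has order $p$ either has the same defect group as $b_M$ (if $b_M$ is stabilized and the extension is "inert") or its defect group meets $M$ in a defect group of $b_M$ together with a generator of $G/M$. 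In the former case $D \subseteq M$, contradicting $DN = M \cap DN \subsetneq$... actually $DN \subseteq M$ gives $D \subseteq M$ directly, so the point is simply: $D \subseteq DN \subseteq M$ forces $D \subseteq M$, but then $DN/N = DM$-stuff — the cleanest route is: $D \subseteq M$ and $D$ is a defect group of $B$, so $B$ has a defect group inside $M$; but $B$ covers $b_M$ which has defect group $D$ as well, and everything is consistent — this does NOT immediately contradict. So instead I would argue: $D_0 \nor G$, and by Theorem 9.17/9.26 of \cite{N2} applied to the normal subgroup $N$, a defect group of $B$ satisfies $D \cap N \in \syl p{N}$-relative-to-$b$; more precisely $D_0 = D \cap N$ is a defect group of $b$. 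Since $b$ is $G$-invariant and $G/N$ is a $p$-group, the Harris–Knörr / Fong-type covering results give that $B$ has defect group $D$ with $D N / N$ a Sylow $p$-subgroup of $G/N$ — but $G/N$ is already a $p$-group, so $DN/N = G/N$, i.e. $G = DN$. This is the argument I would commit to: $D_0 \nor G$ and $b$ $G$-invariant with $G/N$ a $p$-group imply $|DN/N| = |G/N|_p = |G/N|$, hence $G = DN$.

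Next, with $G = DN$ in hand, I would produce the character. Since $D_0 \nor G$ and $D_0$ is a defect group of the $G$-invariant block $b$ of $N$, consider $C = \cent N{D_0}$ and a block $b_0$ of $C D_0$ covered by $b$ with canonical character; applying Lemma \ref{qp} (or rather its ingredients) inside $N$: $b$ has a linear character $\lambda \in \irr{D_0}$ and a defect-zero character $\theta$ of $C D_0 / D_0$ attached to it, with $\eta = \theta_\lambda \in \irr{CD_0}$ of height zero in $b$. Because $D_0 \nor G$ and $b$ is $G$-invariant, a standard averaging/conjugacy argument shows that the $G$-conjugacy class of the canonical character is a single character, so $\lambda$ and $\theta$ (hence $\eta$) can be chosen $G$-invariant. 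Now $\theta$, being of $p$-defect zero, is $p$-rational, and $\lambda$ is a linear character of the $p$-group $D_0$, so $c(\lambda)$ is a power of $p$; but here is where I must be careful — $\lambda$ need not be trivial or $p$-rational in general. The key extra input is that $D_0 = D \cap N$ and $G = DN$, so $D_0$ is normal in $G$ and $\lambda$ is $G$-invariant, and I would argue that the height zero irreducible character $\vartheta \in \irr b$ lying over $\eta$ — extended/induced appropriately — can be adjusted to be $p$-rational: $G$-invariance of $\lambda$ under the $p$-group $D/D_0$ acting on $D_0$ forces $\lambda$ to be trivial on $[D_0, D]$, and combined with the fact that $\lambda$ extends to $D$ (as $D_0 \nor D$ with $D/D_0$ a $p$-group and $\lambda$ $D$-invariant, Gallagher/extension gives an extension $\hat\lambda$ to $D$ which is again linear of $p$-power order), one can replace things to get $p$-rationality. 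More directly: take $\vartheta \in \irr{b}$ of height zero lying over $\theta_\lambda$; since $\theta_\lambda$ and $b$ are $G$-invariant and $N$ has $b$ $G$-invariant, one checks $\vartheta$ is $G$-invariant, and $\Q(\vartheta) \subseteq \Q(\theta_\lambda) = \Q(\theta)(\lambda) \subseteq \Q_{m}(\lambda)$ with $m$ a $p'$-number; then $c(\vartheta)_p = c(\lambda) $ divides $|D_0|$. To get it $p$-rational I would instead choose $\lambda$ from the start to be the trivial character's relative — no: I would invoke that among the height zero characters of a block with normal defect group $D_0$ (as a block of $C D_0$), there is always a $p$-rational one, namely $\theta \cdot 1_{D_0}$ viewed suitably, i.e. take $\lambda = 1_{D_0}$. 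That forces $\eta = \theta$, which is $p$-rational and of defect zero as a character of $C D_0/D_0$; it is $G$-invariant; and the height zero character $\vartheta$ of $b$ lying over $\theta$ is then $G$-invariant with $\Q(\vartheta) \subseteq \Q(\theta) \subseteq \Q_{m}$, $m$ a $p'$-number, hence $p$-rational.

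The main obstacle I anticipate is the second half: guaranteeing a height zero character of $b$ that is simultaneously $G$-invariant and $p$-rational. The $G$-invariance is delicate because $\irr{b}$ is acted on both by $G/N$ (a $p$-group) and by Galois, and one must rule out a height zero character whose $G/N$-orbit is nontrivial yet whose Galois orbit compensates — this is exactly the phenomenon appearing in the proof of Theorem \ref{cliff}, and I would mimic that argument: if $\sigma \in \Gal$ fixes $p'$-roots of unity and $\vartheta^\sigma = \vartheta^g$, then since $\sigma$ fixes the block $b$ (Theorem 3.19 of \cite{N2}) and $b$ is already $G$-invariant, one derives that $\vartheta$ is both $G$-invariant and $p$-rational. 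Making "choose $\lambda = 1_{D_0}$" legitimate — i.e. that $b$ really does contain the height zero character $\theta_{1_{D_0}}$ with $\theta$ the canonical character — follows from Theorem 9.12 of \cite{N2} since $D_0 \nor C D_0$, so the genuine difficulty is packaging the covering theory over $N$ (to get $D_0$ a defect group of $b$ and $G = DN$) cleanly, and I would lean on Theorems 9.17, 9.22, 9.26 of \cite{N2} and the Harris–Knörr correspondence for that.
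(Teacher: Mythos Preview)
Your argument for $G = DN$ is fine and matches the paper's (Corollary 9.6 and Theorem 9.17 of \cite{N2}). The second half, however, has real gaps.

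First, you assert that the canonical character $\theta \in \irr{CD_0/D_0}$ can be taken $G$-invariant via ``a standard averaging/conjugacy argument''. This is not true in general: the canonical character is only determined up to $N$-conjugacy, and $G$-invariance of $b$ only tells you the $N$-orbit of $\theta$ is $G$-stable, not that some representative is $G$-fixed. The paper handles this by a Fong--Reynolds reduction (Theorem 9.14 of \cite{N2}) plus induction on $|G|$, passing to the stabilizer $T = G_e$ of a root block $e$ of $CD_0$, so that one may assume $e$ (and hence $\theta$) is $G$-invariant.

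Second, and more seriously, you never establish that $N/CD_0$ is a $p'$-group. The paper gets this from Theorem 9.22 of \cite{N2} once $e$ is $G$-invariant, and it is the crux of everything that follows. Without it your claims that a constituent $\vartheta$ of $\theta^N$ has height zero and satisfies $\Q(\vartheta) \subseteq \Q(\theta)$ both fail: induction does not preserve fields of values, and $\vartheta$ has no reason to have $p'$-degree over $\theta$. The paper instead observes that, since $N/CD_0$ is $p'$, any constituent $\xi$ of $\theta^N$ has $p$-defect zero as a character of $N/D_0$, hence is automatically $p$-rational and of height zero in $b$. For $G$-invariance the paper does not mimic Theorem \ref{cliff} (which only compares fields over $\Q_{pn}$ and produces no invariant character); it invokes Theorem 13.31 of \cite{I}, a coprime-action result applicable precisely because the $p$-group $D$ acts on $\irr{N|\theta}$ with $|N:CD_0|$ coprime to $p$, to extract a $D$-invariant (hence $G = DN$-invariant) constituent $\xi$.
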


\begin{proof}
By Corollary 9.6 of \cite{N2}, $B$ is the unique $p$-block covering $b$. Then Theorem 9.17 of \cite{N2} implies that $G=ND$ and $D_0$ is the
unique defect group of $b$. 
Let $C=\cent N{D_0}$. Notice that $CD_0\nor G$.
By the Fong-Reynolds correspondence, Theorem 9.14 of \cite{N2}, we can find
   $e$ 
a block of   $CD_0$ covered by $b$ such that the block $b_T$ of 
 $T=G_e$, the stabilizer of $e$ in $G$, inducing $B$ and covering $e$ has defect group $D$. 
 Notice that $e$ has defect group $D_0$.
 Since $b$ is $G$-invariant, notice that $TN=G$, $b_T$ covers a unique block $f$ of $N_e=N\cap T$ that induces $b$ and covers $e$. 
 By induction and the Fong-Reynolds correspondence, we may assume that $e$ is $G$-invariant. 
 Then we have  that  $N/CD_0$ is a $p'$-group, by Theorem 9.22 of \cite{N2}. Since $e$ is $G$-invariant,
 we have that the canonical character $\theta \in \irr{CD_0/D_0}$ of $e$ is $G$-invariant.
 By Theorem 13.31 of \cite{I}, some irreducible constituent $\xi$ of $\theta^N$ is $D$-invariant.
 Thus $\xi$ is $G$-invariant. Since $b$ is the only block of $N$ that covers $e$, we have that $\xi \in \irr b$. Also,
 $\xi$ is $p$-rational, because it has defect zero considered as a character of $N/D_0$. 
 It also has height zero because $\theta$ has height zero and $N/CD_0$ is a $p'$-group.
\end{proof}

\begin{lem}\label{elem}
Suppose that $G/N$ is a cyclic $p$-group, and $\theta \in \irr N$ is $G$-invariant of $p$-height zero.
Then every $\chi \in \irr{G|\theta}$ has $p$-height zero. Also, if $D$ is a defect group of the block
of $\chi$, then $DN=G$ and $D\cap N$ is a defect group of the block of $\theta$.
\end{lem}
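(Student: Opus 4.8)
\textbf{Proof plan for Lemma \ref{elem}.}
The plan is to use the hypothesis that $G/N$ is cyclic of $p$-power order to split $\chi_N$ into a controlled situation. Since $\theta$ is $G$-invariant and $G/N$ is cyclic, $\theta$ extends to $G$ by Theorem 11.22 of \cite{I}; fix an extension $\chi_0 \in \irr{G|\theta}$. Then $\irr{G|\theta} = \{ \beta \chi_0 : \beta \in \irr{G/N}\}$, with all such characters of degree $\theta(1)$. The first step is to show that one such character has $p$-height zero — but this is immediate: since $\theta$ has $p$-height zero, its block is covered by some block of $G$, and by Corollary 9.6 of \cite{N2} (as $G/N$ is a $p$-group and $\theta$ is $G$-invariant, hence its block $b$ is $G$-invariant) there is a \emph{unique} block $B$ of $G$ covering $b$. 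Consequently \emph{every} $\chi \in \irr{G|\theta}$ lies in this same block $B$. So I only need to locate the defect group of $B$ and compare heights.

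Next I would pin down the defect group. Because $G/N$ is a $p$-group and $b$ is $G$-invariant, Theorem 9.17 of \cite{N2} applies: if $D$ is a defect group of $B$ then $G = DN$ and $D \cap N$ is a defect group of $b$. This already gives the two assertions about defect groups in the statement. For the height claim, write $d(B)$ and $d(b)$ for the defects and $|G:N| = p^k$. From $G = DN$ and $D \cap N$ a defect group of $b$ we get $|D| = |D\cap N|\cdot|G:N| = |D\cap N| p^k$, so $d(B) = d(b) + k$. Now $\chi(1) = \theta(1)$ has the same $p$-part, i.e. $\chi(1)_p = \theta(1)_p$. Since $\theta$ has height zero in $b$, we have $\theta(1)_p = (|N|_p / |D\cap N|) = p^{\,a(N)} / |D\cap N|_?$ — more cleanly, height zero means $\nu_p(\theta(1)) = \nu_p(|N|) - d(b)$ where $\nu_p$ is the $p$-adic valuation. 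Then $\nu_p(\chi(1)) = \nu_p(\theta(1)) = \nu_p(|N|) - d(b) = (\nu_p(|G|) - k) - (d(B) - k) = \nu_p(|G|) - d(B)$, which is exactly the statement that $\chi$ has height zero in $B$.

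The one point that needs a little care — and the place I expect to spend the most effort — is justifying that $b$ is $G$-invariant so that the uniqueness-of-covering-block and Theorem 9.17 machinery applies. Here $b$ is the block of $N$ containing $\theta$, and $\theta$ is $G$-invariant by hypothesis, so $b = b(\theta)$ is manifestly $G$-invariant; this is routine but should be stated. A cleaner alternative that avoids even extendibility is to invoke Lemma \ref{elquefaltaba} directly with $D_0 = D \cap N$: one would first need $D\cap N \nor G$, which follows since $D\cap N$ is \emph{the} defect group of the $G$-invariant block $b$ (unique defect group by Theorem 9.17 of \cite{N2}, hence normalized by $G$); but in fact for the present lemma the extendibility route is shortest. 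I would therefore present the argument as: (1) $b := b(\theta)$ is $G$-invariant; (2) Corollary 9.6 of \cite{N2} gives a unique covering block $B$, so all of $\irr{G|\theta}$ lies in $B$; (3) Theorem 9.17 of \cite{N2} gives $G = DN$ and $D\cap N \in \Syl$-type defect group of $b$; (4) the defect/degree bookkeeping above converts height zero of $\theta$ into height zero of $\chi$.
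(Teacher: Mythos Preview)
Your proposal is correct and takes essentially the same approach as the paper: both use Corollary 9.6 and Theorem 9.17 of \cite{N2} to identify the unique covering block $B$ and its defect group, observe that $\chi_N=\theta$ via extendibility plus Gallagher (the paper cites Theorem 5.1 and Corollary 1.23 of \cite{N3} rather than Theorem 11.22 of \cite{I}, but this is the same result), and then do the identical $p$-adic bookkeeping $\chi(1)_p=\theta(1)_p=|N:D\cap N|_p=|G:D|_p$ to conclude height zero. Your aside about invoking Lemma \ref{elquefaltaba} is unnecessary and can be dropped.
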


\begin{proof}
Let $b$ be the block of $\theta$. Let $B$ be the unique $p$-block of
$G$ covering $b$ by Corollary 9.6 of \cite{N2}.  Let  $\chi \in \irr{G|\theta}$, so that $\chi \in \irr B$.  Since $b$ is $G$-invariant, we
    have that $G=DN$, where $D$ is a defect group of $B$, and $D_0=D\cap N$ is a defect group of $b$ by Theorem 9.17 of \cite{N2}.
 We have that $\chi_N=\theta$ because $G/N$ is cyclic and $\theta$ is $G$-invariant 
     (using Theorem 5.1 of \cite{N3} and the Gallagher correspondence Corollary 1.23 of \cite{N3}). Then $\chi(1)_p=\theta(1)_p=|N:D_0|=|G:D|$. Thus $\chi$ has height zero.
\end{proof}

     \begin{lem}\label{index2lem}
    Suppose that $G/N$ is a $p$-group. Let $\theta \in \irr N$ be of $p$-height zero   and $G$-invariant.
    Let $n=|G|_{p'}$.
    Let $D_0$ be a defect group of the block of $\theta$, let $H=\norm G{D_0}$.
    Then there exists an $H$-invariant  $\varphi \in \irr{N\cap H}$ of $p$-height zero such that
    $[\theta_{H \cap N}, \varphi]\not\equiv 0 \mod p$, and $\Q_{pn}(\varphi) \sbs \Q_{pn}(\theta)$. 
        \end{lem}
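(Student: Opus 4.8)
The plan is to reduce to the situation handled by Lemma \ref{elquefaltaba} together with the Brauer correspondence (first main theorem) and to control the field of values via the explicit description of characters in a block with normal defect group. Let $b$ be the block of $\theta$ with defect group $D_0$, and let $B$ be the unique $p$-block of $G$ covering $b$ (Corollary 9.6 of \cite{N2}); since $\theta$ is $G$-invariant and $G/N$ is a $p$-group, $G=D N$ and $D \cap N = D_0$ is a defect group of $b$ (Theorem 9.17 of \cite{N2}), where $D$ is a defect group of $B$ with $D_0 \nor D$ (so we may take $D \le H = \norm G{D_0}$). First I would pass to $H$: by the Harris--Kn\"orr correspondence (or directly, since $D_0 \nor H$), there is a block $\beta$ of $N \cap H$ with defect group $D_0$ and with $\beta^{N} = b$, and a block $\hat B$ of $H$ covering $\beta$ with $\hat B^G = B$ and defect group $D$. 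The point of moving to $H$ is that now $D_0 \nor N \cap H$, so Lemma \ref{elquefaltaba} applies to the pair $N \cap H \nor H$: the block $\beta$ (which is $H$-invariant, being covered by a single block and having normal defect group) has an $H$-invariant, $p$-rational, height-zero irreducible character $\varphi$.

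The remaining work is twofold. The first is to verify the congruence $[\theta_{N \cap H}, \varphi] \not\equiv 0 \pmod p$. Here I would invoke the structure of characters in a block with normal defect group (Theorem 9.12 of \cite{N2}, as recalled in the text before Lemma \ref{qp}): restricting to $C D_0$ where $C = \cent{N \cap H}{D_0}$, both $\theta$ and $\varphi$ restrict to sums of characters of the shape $\theta'_{\lambda}$ with $\lambda$ running over a $D_0$-orbit of linear characters of $D_0$ and $\theta'$ a fixed canonical (defect-zero) character of $C D_0 / D_0$; the $p$-rationality of $\varphi$ pins down $\varphi$ as the constituent lying over the trivial (or, more precisely, a $p$-rational) linear character of $D_0$. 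The intersection number $[\theta_{N \cap H}, \varphi]$ can then be read off, modulo $p$, from a count of Green-correspondence-type data or from Clifford theory over $C D_0$; the key input is that the multiplicities are governed by the (projective) representation theory of the $p'$-group $(N \cap H)/C D_0$, whence the multiplicity is coprime to $p$. The second is the field-of-values bound $\Q_{pn}(\varphi) \sbs \Q_{pn}(\theta)$: since $\varphi$ is $p$-rational, $\Q_{pn}(\varphi) \sbs \Q_n(\varphi) \sbs \Q_{n}(\theta^{N\cap H}) \sbs \Q_{pn}(\theta)$ follows once we know the Galois stabilizer of $\theta$ (over $\Q_{pn}$) also stabilizes $\varphi$ — and this holds because that stabilizer is a $p$-group (as in the proof of Theorem \ref{cliff}, ${\rm Gal}(\Q_{|G|}/\Q_{pn})$ is a $p$-group), it normalizes $D_0$ up to $N\cap H$-conjugacy through its action on $b$ and $\beta$ (Galois automorphisms fixing $p'$-roots of unity fix blocks, Theorem 3.19 of \cite{N2}), and the unique $p$-rational height-zero character of $\beta$ of a given type is Galois-fixed.

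The main obstacle I expect is the congruence $[\theta_{N\cap H}, \varphi] \not\equiv 0 \pmod p$: this is not formal and requires genuinely identifying which constituent of $\theta_{N \cap H}$ is $\varphi$ and why it occurs with multiplicity prime to $p$. The natural route is via Theorem 13.31 of \cite{I} (as used in Lemma \ref{elquefaltaba}), which produces a $D$-invariant — hence $H$-invariant — constituent of the canonical character induced up; one must additionally check that this constituent is the $p$-rational one and that, when we restrict $\theta$ rather than induce, the relevant Frobenius-reciprocity count $[\theta_{CD_0}, \theta'_\lambda]$ summed over the $p$-rational $\lambda$ is a unit mod $p$. An alternative, perhaps cleaner, route is to avoid $C D_0$ and argue directly with the Glauberman or Dade correspondence associated to the coprime action of $H/D_0 N$-pieces, but the $C D_0$ approach keeps everything inside the framework already set up in the paper. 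Once the multiplicity statement is in hand, both the height-zero claim for $\varphi$ and the field bound are essentially bookkeeping against the results already proved above.
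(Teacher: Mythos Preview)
Your setup through Harris--Kn\"orr is correct and matches the paper, but the core step has a genuine gap. You propose to let $\varphi$ be the specific $H$-invariant, $p$-rational, height-zero character of $\beta$ supplied by Lemma~\ref{elquefaltaba}, and then to verify $[\theta_{M},\varphi]\not\equiv 0\pmod p$ afterwards (here $M=N\cap H$). But Lemma~\ref{elquefaltaba} produces its character as a $D$-invariant constituent of the \emph{induced} canonical character $(\theta')^{M}$, and this bears no a~priori relation to the \emph{restriction} $\theta_{M}$. Your suggested route through the $CD_0$-structure does not close this: knowing $\varphi$ lies over the trivial linear character of $D_0$ says nothing about $[\theta_M,\varphi]$ unless you already control which linear characters of $D_0$ lie under $\theta$, and you do not. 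There is no reason the particular character from Lemma~\ref{elquefaltaba} should occur in $\theta_M$ at all, let alone with multiplicity prime to $p$.

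The paper's argument is organized differently: it does not fix $\varphi$ first and then check the multiplicity, but obtains all three conditions ($H$-invariance, field bound, multiplicity) \emph{simultaneously} by an orbit-counting trick. The character $\psi$ from Lemma~\ref{elquefaltaba} is used only as a test vector. Setting $\delta=\theta_M$ and using the $\tilde{\ }$ construction, Lemma~6.5(b) of \cite{N2} gives $[\tilde\delta,\psi]/\psi(1)\not\equiv 0\pmod{\mathcal P}$. Expanding $\tilde\delta$ and filtering by block and height via Lemmas~3.20--3.24 of \cite{N2} reduces this to a sum over the set $\Omega=\{\xi\in\Irr_0(e):[\delta,\xi]\not\equiv 0\pmod p\}$. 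Now the $p$-group $D\times\mathcal U$, with $\mathcal U=\mathrm{Gal}(\Q_{|G|}/\Q_{pn}(\theta))$, acts on $\Omega$; since $\delta$ and $\psi$ are $D\times\mathcal U$-invariant, the contributions of nontrivial orbits vanish modulo $\mathcal P$, so some $\varphi\in\Omega$ is fixed. The $D$-invariance gives $H$-invariance, and the $\mathcal U$-invariance gives $\Q_{pn}(\varphi)\subseteq\Q_{pn}(\theta)$. Note in particular that the resulting $\varphi$ need not be $p$-rational; your plan to derive the field bound from $p$-rationality of $\varphi$ is therefore also off-track.
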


    \begin{proof}   
     Let $b$ be the $p$-block of $\theta$ and let $B$ be the only $p$-block of $G$ covering $b$.
     Since $b$ is $G$-invariant, we
    have that $G=DN$, where $D$ is a defect group of $B$, and $D_0=D\cap N$ is a defect group of $B_0$,
     by Theorem 9.17 of \cite{N2}.

   Since $D \sbs H$, note that $G=HN$. Let  $M=H\cap N=\norm N{D_0}$. Then $H=MD$.  Let $e$ be the Brauer correspondent block of $M$ 
   (with defect group $D_0$) inducing $b$. By the Harris--Kn\"orr Theorem  \cite[Theorem 9.28]{N2},
   there is a unique block $E$ of $H$ covering $e$ that induces $B$. This block 
   $E$ has defect group $D$.

     Let $\mathcal U={\rm Gal}(\Q_{|G|}/\Q_{pn}(\theta))$. Notice that $\mathcal U\leq {\rm Gal}(\Q_{|G|}/\Q_{pn})$ which is a $p$-group.   
     We must then work to show that $e$ has a $D\times \mathcal U$-invariant height zero character $\varphi$ with $[\theta_M, \varphi]\not\equiv 0$ mod $p$.
     We will use the $\tilde{} $ construction as in \cite[page 27]{N2} and the fact that $e$ possesses an irreducible character $\psi$ which is $H$-invariant and $p$-rational
     by Lemma \ref{elquefaltaba}.

     Let $\delta=\theta_M$. Then 
     $\tilde\delta$ defined as $\tilde\delta(x)=|M|_p\delta(x)$, if $x \in M$ is $p$-regular, and 0, otherwise, is a generalized character by Lemma 2.15 of \cite{N2}.
     We can write
     $$\tilde \delta= \sum_{\xi \in \irr{M}}[\delta, \xi] \tilde \xi \, .$$
    By Lemma 6.5.(b) of \cite{N2}, we have that
     $${[\tilde \delta, \psi] \over \psi(1)} \not \equiv 0 \, {\rm mod}\, \mathcal P \, ,$$ 
     where
    $\mathcal P$ is the maximal ideal of ${\bf R}_M$ the localization of the ring of algebraic integers $\bf R$ at a prime $M$ containing $p$ (see \cite[page 16]{N2}). 
    Therefore
    $$\Lambda= \sum_{\xi \in \irr{M}}[\delta, \xi] {[\tilde \xi, \psi] \over \psi(1)} \not \equiv 0 \, {\rm mod}\, \mathcal P \, .$$
    Note that $[\tilde \xi, \psi]=[\xi, \tilde \psi]$ whenever $\xi \in \irr M$. 
    By Lemma 3.20 of \cite{N2}, recall that $[\xi, \tilde \psi]=0$ if $\xi$ is not in $e$, so that 
     $$\Lambda= \sum_{\xi \in \irr{e}}[\delta, \xi] {[\tilde \xi, \psi] \over \psi(1)} \not \equiv 0 \, {\rm mod}\, \mathcal P \, .$$
    By Lemma 3.22(a) of \cite{N2} ${[\tilde \xi, \psi]\over \psi(1)}\in {\bf R}_M \cap \Q$ for every $\xi \in \irr e$, so
    $\nu ({[\tilde \xi, \psi]\over \psi(1)})\ge 0$, where $\nu$ is the valuation function defined in \cite[page 64]{N2}.
    By Theorem 3.24 of \cite{N2},  we have that $\nu ({[\tilde \xi, \psi]\over \psi(1)})= 0$ if, and only if, $\xi$ has height zero in the $p$-block $e$ ($\xi \in {\rm Irr}_0(e)$).
    By Lemma 3.21 of \cite{N2} we have that ${[\tilde \xi, \psi]\over \psi(1)} \in \mathcal P$ whenever $\xi$ does not have height zero in $e$.
    Hence $$\Lambda \equiv \sum_{\xi \in {\rm Irr}_0(e)}[\delta, \xi] {[\tilde \xi, \psi] \over \psi(1)} \not \equiv 0 \, {\rm mod}\, \mathcal P\, .$$
    Consider $\Omega=\{ \xi \in {\rm Irr}_0(e) \  | \ [\delta, \xi]\not \equiv 0 \mod p \}$. We have that
    $$\Lambda\equiv \sum _{\xi \in \Omega} [\delta, \xi] {[\tilde \xi, \psi] \over \psi(1)} \not \equiv 0 \, {\rm mod} \, \mathcal P\, . $$
    The $p$-group $D \times \mathcal U$ acts on $\Omega$. Let $\Omega=\Omega_{\xi_1}\cup \ldots \cup \Omega_{\xi_r}$ be the orbit
    decomposition of $\Omega$. Then, given that $\delta$ and $\psi$ are $D \times \mathcal U$-invariant, we have that
    $$\Lambda\equiv \sum_{i=1}^r |\Omega_{\xi_i}| [\delta, \xi_i] {[\tilde {\xi_i}, \psi] \over \psi(1)} \not \equiv 0 \, {\rm mod} \, \mathcal P\, .  $$
    In particular there is some $D\times \mathcal U$-invariant $\varphi \in \Omega$. The $D$-invariance of $\varphi$ implies that $\varphi$ is $H$-invariant
    and the $\mathcal U$-invariance of $\varphi$ implies that  $\Q_{pn}(\varphi) \sbs \Q_{pn}(\theta)$.
   \end{proof}

The following is a consequence of an argument of J. Thompson, we refer the reader to Theorem 6.9 of \cite{N3}.

    \begin{lem}\label{mult}
    Suppose that $G/N$ is a $p$-group, and let $H \le G$ such that $G=NH$.
    Write $M=N\cap H$. Let $\theta \in \irr N$ be $G$-invariant, and let
    $\varphi \in \irr M$ be $H$-invariant such that $[\theta_M, \varphi] \not\equiv 0$ mod $p$.
    
    \begin{enumerate}[(a)]
    
    \item Suppose that $\xi \in \irr H$ extends $\varphi$. Then there is an extension $\chi \in \irr G$ of $\theta$
    such that $\Q(\chi) \sbs \Q(\xi,\theta)$.
    
    \item Suppose that $\chi \in \irr G$ extends $\theta$. 
    Then there is an extension $\xi \in \irr H$ of $\varphi$
    such that $\Q(\xi) \sbs \Q(\chi,\varphi)$.
    
    \item
    Suppose that   $G/N$ has order $p$. Then  $\Q_p(\chi,\varphi)=\Q_p(\theta,\xi)$
    for every $\chi \in \irr{G|\theta}$
    and $\xi \in \irr{H|\varphi}$.

    \end{enumerate}
    \end{lem}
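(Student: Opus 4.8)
The plan is to reduce the whole lemma to a single statement about extensions, proved by running the projective‑representation proof of Thompson's theorem (Theorem 6.9 of \cite{N3}) Galois‑equivariantly. Throughout I use that $G=NH$ gives a canonical isomorphism $H/M\cong G/N$, under which I identify $\irr{H/M}$ with $A:=\irr{G/N}$, a finite abelian $p$-group. I first reduce (c) to (a) and (b). Here $|G/N|=|H/M|=p$. If $\theta$ does not extend to $G$, then by Thompson's theorem $\varphi$ does not extend to $H$, so $\irr{G|\theta}=\{\theta^G\}$ and $\irr{H|\varphi}=\{\varphi^H\}$; since $\theta$ is $G$-invariant, $\Q(\theta^G)=\Q(\theta)$ and $\Q(\varphi^H)=\Q(\varphi)$, and the equality is clear. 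If $\theta$ extends, fix extensions $\chi_0$ of $\theta$ and $\xi_0$ of $\varphi$; by Gallagher, $\irr{G|\theta}=\{\chi_0\beta\mid\beta\in A\}$ and $\irr{H|\varphi}=\{\xi_0\gamma\mid\gamma\in A\}$. Since every $\beta\in A$ takes values in $\Q_p$, one recovers $\chi_0$ from $\chi_0\beta$ and $\zeta_p$, so $\Q_p(\chi_0\beta,\varphi)=\Q_p(\chi_0,\varphi)$ for all $\beta$, and likewise on the $H$-side; thus it suffices to prove $\Q_p(\chi_0,\varphi)=\Q_p(\theta,\xi_0)$, which follows from (b) applied to $\chi_0$ and (a) applied to $\xi_0$ (using $\Q(\theta)\subseteq\Q(\chi_0)$ and $\Q(\varphi)\subseteq\Q(\xi_0)$).

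The heart is (a) and (b). Let $\Omega_\theta$ and $\Omega_\varphi$ be the sets of extensions of $\theta$ to $G$ and of $\varphi$ to $H$; by Thompson's theorem both are nonempty once one is, and each is a torsor under $A$ via twisting. I construct a canonical bijection $\Phi\colon\Omega_\theta\to\Omega_\varphi$. Given $\chi\in\Omega_\theta$, realize it by a representation $\mathcal{X}$ of $G$ (harmlessly over $\overline{\Q}$, indeed over $\Q_{|G|}$). Since $\varphi$ is $H$-invariant, its homogeneous component $V$ in the $M$-module afforded by $\mathcal{X}$ (the image of $e_\varphi\in\Q(\varphi)[M]$) is $H$-invariant, so $\mathcal{X}|_H$ restricts to a representation $\mathcal{Z}$ of $H$ on $V$. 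Writing $V\cong W\otimes\C^{e}$ with $W$ affording $\varphi$ and $e=[\theta_M,\varphi]$, the $H$-invariance of $\varphi$ yields a factorization $\mathcal{Z}(h)=J_h\otimes K_h$ with $J$ a projective representation of $H$ such that $J|_M$ affords $\varphi$ (factor set $\beta$), and $K$ a projective representation of $H/M$ of degree $e$ (factor set $\gamma$), with $\beta\gamma=1$. Taking determinants in $K_{h_1}K_{h_2}=\gamma(h_1,h_2)K_{h_1h_2}$ shows $[\gamma]^{e}=1$ in $H^{2}(H/M,\C^\times)$; as $[\gamma]$ also has $p$-power order and $p\nmid e$, we get $[\gamma]=[\beta]=1$. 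Hence, multiplying $J$ and $K$ by reciprocal scalar functions on $H/M$ makes both ordinary with $\mathcal{Z}=J\otimes K$ still holding; the residual freedom is a twist by $\irr{H/M}$, and exactly one such twist achieves $\operatorname{det}K=1_{H/M}$ (unique because $\mu\mapsto\mu^{e}$ is a bijection of the $p$-group $\irr{H/M}$). Put $\Phi(\chi):=\operatorname{tr}J$ for this normalized factorization; it lies in $\Omega_\varphi$.

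Now $\Phi$ has two equivariances. Twisting $\chi$ by $\beta\in A$ twists $\mathcal{Z}$, hence the normalized $J$, by the character $\tilde\beta\in\irr{H/M}$ matching $\beta$; so $\Phi$ is $A$-equivariant, hence a bijection of torsors, and $\Phi^{-1}$ is $A$-equivariant too. Moreover $\Phi$ commutes with $\Gal(\overline{\Q}/\Q(\theta,\varphi))$: for such $\sigma$, $\mathcal{X}^\sigma$ affords the extension $\chi^\sigma\in\Omega_\theta$, and since $\sigma$ fixes $\varphi$ the notion ``$\varphi$-homogeneous component'' is $\sigma$-stable, the tensor factorization transforms by $\sigma$, and the normalizing conditions ``ordinary'' and ``$\operatorname{det}K=1$'' are $\sigma$-invariant; hence $\Phi(\chi^\sigma)=\Phi(\chi)^\sigma$, and likewise for $\Phi^{-1}$. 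Part (b) follows at once: for $\chi\in\Omega_\theta$ put $\xi=\Phi(\chi)$; any $\sigma$ fixing $\Q(\chi,\varphi)$ also fixes $\Q(\theta,\varphi)$ and $\Q(\chi)$, so $\xi^\sigma=\Phi(\chi^\sigma)=\Phi(\chi)=\xi$, i.e. $\Q(\xi)\subseteq\Q(\chi,\varphi)$. Symmetrically, for $\xi\in\Omega_\varphi$ put $\chi=\Phi^{-1}(\xi)$; any $\sigma$ fixing $\Q(\theta,\xi)$ fixes $\Q(\theta,\varphi)$ and $\Q(\xi)$, so $\chi^\sigma=\Phi^{-1}(\xi^\sigma)=\Phi^{-1}(\xi)=\chi$, i.e. $\Q(\chi)\subseteq\Q(\theta,\xi)$, which is (a).

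The main obstacle is the construction of $\Phi$ and the two equivariances. Making the tensor factorization $\mathcal{Z}=J\otimes K$ genuinely canonical is exactly where $p\nmid[\theta_M,\varphi]$ is needed — both to force $[\gamma]=1$ (so that $J$ and $K$ can be de‑projectivized) and to make the $\operatorname{det}K=1$ normalization exist and be unique — and one must then verify that this canonical recipe is unaffected by a Galois automorphism fixing $\Q(\theta,\varphi)$, the key point being that ``$\varphi$-homogeneous component'' is $\sigma$-stable precisely because $\sigma$ fixes $\Q(\varphi)$. A minor technical point to be handled with care is that all the representations involved can be realized over $\overline{\Q}$ (indeed over $\Q_{|G|}$), so that the relevant Galois group acts on every piece of data in the construction.
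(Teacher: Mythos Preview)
Your proof is correct and follows essentially the same approach as the paper's: both use the Thompson correspondence between extensions of $\theta$ to $G$ and extensions of $\varphi$ to $H$, and verify that it is Galois-equivariant over $\Q(\theta,\varphi)$. The only difference is presentational: the paper quotes Theorem~6.9 of \cite{N3} as a black box and checks that its character-theoretic uniqueness criterion (namely $\chi_H=\Psi\xi+\Delta$ with $\det\Psi=1$ and $[\Delta_M,\varphi]=0$) is preserved by any $\sigma$ fixing $\Q(\xi,\theta)$, whereas you rerun the projective-representation proof of that theorem and track Galois-equivariance through each step --- your normalization $\det K=1$ is exactly the paper's condition $\det\Psi=1$ (with $\Psi$ the character of your $K$), so the two bijections coincide.
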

    
    \begin{proof} Write $m=|G|$. In order to prove part (a), let $\sigma \in {\rm Gal}(\Q_m/\Q(\xi,\theta))$. 
    Since $\xi^\sigma=\xi$, in particular $\varphi^\sigma=\varphi$. 
    By  Theorem 6.9 of \cite{N3}, there is a unique extension $\chi \in \irr G$ of $\theta$ such that
    $\chi_H=\Psi \xi + \Delta$, where $\Delta$ is a character of $H$ or zero, with $[\Delta_M, \varphi]=0$
    and $\Psi$ is a character of $H/M$ with trivial determinant.
    Now, $\chi^\sigma$ is an extension of $\theta=\theta^\sigma$ and 
    $(\chi^\sigma)_H=\Psi^\sigma \xi + \Delta^\sigma$, where $[(\Delta^\sigma)_M, \varphi]=0$ and
    $\Psi^\sigma$ is a character of $H/M$ with trivial determinant. By the uniqueness of $\chi$, we get that $\chi=\chi^\sigma$.
    Part (b) is proved in the same way.
   
   We prove part (c). We have that $\theta$ extends to $G$ by Theorem 5.1 of \cite{N3}. In fact, every character in $\irr{G|\theta}$ is an extension of $\theta$ by the Gallagher correspondence. Let $\chi \in \irr{G|\theta}$.  By part (b), there is an extension
   $\xi \in \irr H$ of $\varphi$ such that $\Q(\xi) \sbs \Q(\chi,\varphi)$. By part (a), there is
   an extension $\chi^\prime \in \irr G$ of $\theta$ such that $\Q(\chi^\prime) \sbs \Q(\xi,\theta)$.
   Since $\chi^\prime=\lambda \chi$ for some $\lambda \in \irr{G/N}$, 
   we have that $\Q_p(\chi)=\Q_p(\chi^\prime)$. Since $\Q(\theta) \sbs \Q(\chi)$ and $\Q(\varphi)\sbs \Q(\xi)$, part (c) follows. 
    \end{proof}
   
   In order to treat later the case where $N$ is a normal subgroup of $G$ of index $p$, we need to extend Lemma \ref{normald}, and prove the statement of Conjecture D in a slightly more general case than
   the normal defect group case.
   
   \begin{lem}\label{extnormald}
   Suppose that $G/N$ has order $p$. Let $n=|G|_{p'}$.
   Let $\chi \in \irr G$ have $p$-height zero.
   Suppose that $\chi_N=\theta \in \irr N$ and the defect group $D_0$ of
   the block of $\theta$ is normal in $G$. 
  Then $\Q_{c(\chi)_p} \sbs \Q_{pn}(\chi)$.
   \end{lem}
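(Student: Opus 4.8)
The plan is to argue by induction on $|G|$, after a chain of reductions, ending with a block having cyclic defect group. Since $\Q_p\subseteq\Q_{pn}\subseteq\Q_{pn}(\chi)$, the statement is clear when $c(\chi)_p\le p$, so (recall $c(\chi)_2\ne2$) we may assume $c(\chi)_p=p^a$ with $a\ge2$. Write $\theta=\chi_N$: it is $G$-invariant, of $p$-height zero, its block $b$ has defect group $D_0\nor G$, and by Theorem 9.17 of \cite{N2} the block $B$ of $\chi$ satisfies $G=DN$ and $D_0=D\cap N$ for a defect group $D$ of $B$, so $|D:D_0|=p$.

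The first reduction is a Clifford reduction over $CD_0$, where $C=\cent N{D_0}$ (so $CD_0\nor G$): if a constituent $\eta$ of $\theta_{CD_0}$ is not $G$-invariant, I would pass to the Clifford correspondent $\psi\in\irr{G_\eta\mid\eta}$ of $\chi$. Since $\chi_N=\theta$ is irreducible one checks (via Mackey) that $G_\eta\not\le N$, hence $|G_\eta:G_\eta\cap N|=p$, the restriction $\psi_{G_\eta\cap N}$ is the (irreducible) Clifford correspondent of $\theta$ and lies in a block with defect group $D_0\nor G_\eta$; by Theorems \ref{ht0} and \ref{cliff}, $c(\psi)_p=c(\chi)_p$ and $\Q_{pn}(\psi)=\Q_{pn}(\chi)$, so the inductive hypothesis for $(G_\eta,G_\eta\cap N,\psi)$ finishes this case. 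Thus we may assume $\theta_{CD_0}$ is homogeneous, and then Lemma \ref{qp}, applied to $\theta$ inside $N$, gives $\theta_{D_0}=\theta(1)\lambda$ for a linear character $\lambda\in\irr{D_0}$, necessarily $G$-invariant, with $c(\lambda)=c(\theta)_p$.

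Since $\ker\lambda=\ker\theta\cap D_0$ is normal in $G$ and contained in $\ker\chi$, I would reduce modulo $\ker\lambda$ by induction to the case $\ker\lambda=1$; then $\lambda$ is faithful, so $D_0$ is cyclic, and as $\lambda$ is also $G$-invariant the action of $G$ on $D_0$ is trivial, i.e.\ $D_0\le\zent G$, with $c(\theta)_p=|D_0|$ unless $p=2$ and $|D_0|=2$. If now $c(\chi)_p=c(\theta)_p$, then $\Q_{c(\chi)_p}=\Q(\lambda)\subseteq\Q(\theta)\subseteq\Q(\chi)\subseteq\Q_{pn}(\chi)$ (using $\lambda=\theta_{D_0}/\theta(1)$) and we are done. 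Otherwise $c(\theta)_p<c(\chi)_p$; since $c(\theta)_p\mid c(\chi)_p\mid\exp(D)\mid|D|=p|D_0|$, a short computation forces $c(\chi)_p=|D|$, whence $\exp(D)=|D|$ and $D$ is cyclic of order $c(\chi)_p$.

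So it remains to treat the case where $B$ has cyclic defect group $D$, the character $\chi$ satisfies $c(\chi)_p=|D|$, and $D\cap N=D_0\le\zent G$ is the index-$p$ subgroup of $D$. If $D\nor G$, Lemma \ref{normald} applied to $\chi$ in $G$ finishes the proof. If $D\not\nor G$, put $H=\norm G D$, so that $D\nor H$, $HN=G$ and $|H:H\cap N|=p$; I would then transfer $\chi$, via the structure theory of blocks with cyclic defect group, to a character $\chi'$ of the Brauer correspondent of $B$ in $H$ with $c(\chi')_p=c(\chi)_p$ and $\Q_{pn}(\chi')\subseteq\Q_{pn}(\chi)$, and conclude by applying Lemma \ref{normald} to $\chi'$ in $H$. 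This last step --- carrying the Galois (field-of-values) information down to $\norm G D$ without invoking the Alperin--McKay--Navarro conjecture in general --- is where I expect the main difficulty to lie; it is precisely here that the hypothesis $D_0\nor G$ is decisive, since it forces $D$ to be cyclic in the nontrivial case, and I would expect it to be handled through Dade's description of cyclic blocks (equivalently, the known field-of-values statements for cyclic-defect blocks), possibly combined with Lemma \ref{mult}.
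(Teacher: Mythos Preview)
Your opening reductions (the Clifford argument over $CD_0$, and Lemma~\ref{qp} applied to $\theta$ inside $N$ to get a $G$-invariant linear $\lambda\in\irr{D_0}$ with $\theta_{D_0}=\theta(1)\lambda$) match the paper's proof exactly, and your verification that $G_\eta\not\le N$ is a nice detail the paper leaves implicit. After that point the two arguments diverge substantially.

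The paper does \emph{not} factor out $\ker\lambda$, and it never tries to force $D$ to be cyclic. Instead it passes directly to $H=\norm GD$ and $C=\norm ND=H\cap N$. The key external ingredient is Theorem~A of \cite{NS}: because $\lambda$ is $G$-invariant, that result yields a $D$-invariant height-zero $\theta'\in\irr C$ in a block of defect $D_0$ with $[\theta_C,\theta']\equiv\pm1\pmod p$, and the Galois compatibility in part~(c) of that theorem gives $\Q_n(\theta')=\Q_n(\theta)$. Lemma~\ref{mult}(c) then turns this into $\Q_{pn}(\chi)=\Q_{pn}(\xi)$ for any extension $\xi\in\irr{H\mid\theta'}$, whence $c(\chi)_p=c(\xi)_p$ by Lemma~\ref{triv}(ii). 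Since $(H,C,\xi)$ again satisfies the hypotheses of the lemma and $H<G$, induction finishes the proof. No cyclic-defect theory is invoked.

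Your route has a genuine gap: the divisibility $c(\chi)_p\mid\exp(D)$ is asserted but not justified, and it is not a standard fact---for a general height-zero character it is essentially a consequence of the Alperin--McKay--Navarro conjecture, which one cannot assume here. In your reduced situation ($D_0\le\zent G$ cyclic) one can in fact prove $c(\chi)_p\mid p|D_0|$ directly when $p$ is odd, or when $p=2$ and $|D_0|\ge4$: since then $\Q_{|D_0|}=\Q(\lambda)\subseteq\Q(\theta)$, one has $\Q_n(\theta)=\Q_{|D_0|n}$, and combining $[\Q(\chi):\Q(\theta)]\mid p$ with the fact that $\mathrm{Gal}(\Q_{p^\infty n}/\Q_{|D_0|n})$ is procyclic forces $\Q_n(\chi)\subseteq\Q_{p|D_0|n}$ (and indeed gives the desired conclusion immediately, without ever needing $D$ cyclic). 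But for $p=2$ and $|D_0|\le2$ this breaks down: $\Q_n$ has three quadratic extensions inside $\Q_{8n}$, and two of them (namely $\Q_n(\sqrt{\pm2})$) would violate the conclusion of the lemma. Ruling these out is exactly the heart of the matter, and your proposed endgame via Dade's cyclic theory would still need a Galois-equivariant height-zero correspondence that you have not supplied. The paper's use of \cite{NS} plus Lemma~\ref{mult} handles all primes and all sizes of $D_0$ uniformly.
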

   
   \begin{proof} Write $p^a=c(\chi)_p$. We may assume that $a\ge 2$ as the statement is trivially satisfied otherwise.
   We argue by induction on $|G|$.  
   
   Write $K=\cent N {D_0}D_0 \nor G$. Let $\eta$ be an irreducible constituent of $\chi_K$ and $\psi \in \irr{G_\eta | \eta}$ be the Clifford correspondence of $\chi$.
   Using Theorem \ref{cliff} and induction, we may assume that $\eta$ is $G$-invariant. In particular, $\theta_K=\chi_K$ is homogeneous.
   By Lemma \ref{qp},
   $\chi_{D_0}=\theta_{D_0}$ 
   is homogeneous. 
   Write $\theta_{D_0}=\theta(1)\lambda$,
   where $\lambda \in \irr{D_0}$ is linear and $G$-invariant.
  
  Let $D$ be a defect group of the block of $\chi$. We have that $G=ND$ and $D_0=N \cap D$, using for instance Lemma \ref{elem}. 
   Let $H=\norm G D$ and $C=\norm ND=N\cap H$. If $H=G$ then $\chi$ lies in a block of normal defect, and we are done by Lemma \ref{normald}.
   Hence we may assume that $H<G$.
   By Theorem A of \cite{NS}, there are a block $b'$ of $C$ with defect $D_0$
   and a $D$-invariant character $\theta'$ in $b'$ satisfying that
  $[\theta_C, \theta'] \equiv \pm 1 \, {\rm mod}\,  p$. Since $\lambda$ is $G$-invariant $\theta^\prime$ is the unique irreducible constituent of $\theta_C$
 such that $\theta^\prime(1)_p=|C:D_0|_p$, and   $[\theta_C, \theta^\prime]\not \equiv 0 \mod p$.
 The block $B'$ of $H=CD$ covering $\theta^\prime$ has defect group $D$,
 the block of $\theta^\prime$ has defect group $D_0$, and   $\theta^\prime$ has height zero.
 
 Given $\sigma\in {\rm Gal}(\Q_{|G|}/\Q_n)$, we have that $b^\sigma=b$ and $(b')^\sigma=b'$ because $\sigma$ fixes $p'$-roots of unity.
 Moreover $(\theta^\sigma)'=(\theta')^\sigma$ under the canonical correspondence given by Theorem A of \cite{NS} (because of part (c) in that statement, noticing that $\lambda^\sigma$ is also $G$-invariant).
 This implies
that $\Q_n(\theta^\prime)=\Q_n(\theta)$  by elementary Galois theory. 

Let $\xi \in \irr{H|\theta^\prime}$. Then $\xi$ extends $\theta^\prime$ and, by Lemma \ref{elem}, notice that $\xi$ has height zero.
  By Lemma \ref{mult}(c), we have that $\Q_p(\chi,\theta^\prime)=\Q_p(\theta, \xi)$.
  Since $\Q_n(\theta)=\Q_n(\theta^\prime)$ we have that $\Q_{pn}(\chi, \theta)=\Q_{pn}(\chi, \theta')=\Q_{pn}(\xi, \theta)=\Q_{pn}(\xi, \theta')$.
   We easily deduce that 
  $\Q_{pn}(\chi)=\Q_{pn}(\xi)$ using that $\Q(\theta) \sbs \Q(\chi)$ and $\Q(\theta^\prime) \sbs \Q(\xi)$.
  We want to apply Lemma \ref{triv}(ii). If $c(\xi)_p=1$ then $\Q(\xi)\sbs \Q_n$ and consequently $\Q(\chi)\sbs \Q_{pn}(\chi)\sbs \Q_{pn}$, but this
  is impossible as $c(\chi)_p\geq p^2$. Hence $c(\chi)_p\geq p$ and by Lemma \ref{triv}(ii) $c(\chi)_p=c(\xi)_p$. 
Recall that $H<G$. Write $k=|H|_{p'}.$ By induction,  $\Q_{c(\chi)_p}=\Q_{c(\xi)_p}\sbs \Q_{pk}(\xi)\sbs \Q_{pn}(\xi)=\Q_{pn}(\chi)$, 
then we are done.
   \end{proof}

    \section{Character Triples and Fields}
    
    If $\chi \in \irr G$ lies in a $p$-block $B$, we denote $h(\chi)$ the $p$-height of $\chi$ (we
    will sometimes just refer to $h(\chi)$ as the height of $\chi$).
    We remind the reader that if $N \sbs \ker\chi$, then the height of $\chi$ as a character of $G$
    and as a character of $G/N$ can be different.  For instance, the character of degree 2 of $\sf S_3$
    has 2-height zero, but as a character of $\sf S_4$ has 2-height 1.
    The next result clarifies this situation.
    
    \begin{lem}\label{r}
    Suppose that $\chi \in \irr B$, where $B$ is a $p$-block of $G$ with defect group $P$.
    Suppose that $K \sbs \ker\chi$ and let $\bar\chi \in \irr{G/K}$ the  character $\chi$ viewed
    as a character in $G/K$. Let $\bar B$ be the $p$-block of $G/K$ containing $\bar \chi$.
    \begin{enumerate}
   \item
   There is a defect group $\bar D$ of $\bar B$ such that $\bar D \le PK/K$.
   \item We have that
    $$p^{h(\chi)}={|PK/K| \over |\bar D|} p^{h(\bar\chi)}\, .$$
    In particular, $h(\chi)\ge h(\bar\chi)$ and if $h(\chi)=0$ then $PK/K=\bar D$ and $h(\bar\chi)=0$.   
    \item
    If $K\sbs \zent G$, then $PK/K=\bar D$ and $h(\chi)=h(\bar\chi)$.
    \end{enumerate} 
    \end{lem}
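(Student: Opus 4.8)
The plan is to deduce the three assertions from the standard height formula together with the behaviour of defect groups under passage to a quotient. Recall that if $\psi\in\irr{B'}$ lies in a $p$-block $B'$ of a finite group $X$ with $|X|_p=p^{c}$ and defect group $D'$, then $p^{h(\psi)}=\psi(1)_p\,|D'|/p^{c}$. Assertion (i) is the classical fact on dominated blocks: since $\bar\chi$ is obtained from $\chi\in\irr B$ with $K\sbs\ker\chi$, the block $\bar B$ is dominated by $B$, and a block dominated by $B$ has a defect group contained in $PK/K$.

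For (ii) the crucial auxiliary point is that $P\cap K\in\Syl_p(K)$, so that $|PK/K|=|P|/|K|_p$. To see this, note $\chi_K=\chi(1)1_K$, whence the only block of $K$ covered by $B$ is the principal block $b_0$ of $K$; by Kn\"orr's theorem there is a defect group $D$ of $B$ with $D\cap K$ a defect group of $b_0$, i.e.\ $D\cap K\in\Syl_p(K)$, and conjugating (as $K\nor G$) gives $P\cap K\in\Syl_p(K)$ as well. Writing $|G|_p=p^a$, so that $|G/K|_p=p^{a}/|K|_p$, I now apply the height formula to $\chi$ in $G$ and to $\bar\chi$ in $G/K$ (using $\bar\chi(1)=\chi(1)$) and divide the two identities: the factors $\chi(1)_p$ and $p^{a}$ cancel, leaving
\[
\frac{p^{h(\chi)}}{p^{h(\bar\chi)}}=\frac{|P|}{|\bar D|\,|K|_p}=\frac{|PK/K|}{|\bar D|}\,.
\]
Since $\bar D\le PK/K$ by (i), the right-hand side is a non-negative power of $p$, so $h(\chi)\ge h(\bar\chi)$; and if $h(\chi)=0$ then $\tfrac{|PK/K|}{|\bar D|}\,p^{h(\bar\chi)}=1$ forces both $|PK/K|=|\bar D|$ and $h(\bar\chi)=0$, whence $PK/K=\bar D$.

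For (iii), assume $K\le\zent G$. First I reduce to the case that $K$ is a $p$-group: since $B$ covers only the principal block of the abelian group $K$, whose irreducible characters are exactly the $\lambda\in\irr K$ with $K_{p'}\sbs\ker\lambda$, every $\psi\in\irr B$ has $K_{p'}\sbs\ker\psi$; thus $B$ is identified with a block of $G/K_{p'}$ whose defect groups are the images of those of $B$ (reduction modulo a normal $p'$-subgroup), and $K/K_{p'}$ is a central $p$-subgroup there, so we may assume $K$ is a central $p$-group. Then $K\le\OO_p(G)\le P$, so $PK/K=P/K$ has order $|P|/|K|$, while the block $\bar B$ of $G/K$ dominated by $B$ has defect group $D/K$ for $D$ a defect group of $B$ (reduction modulo a normal $p$-subgroup). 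Hence $|\bar D|=|P|/|K|=|PK/K|$, which with $\bar D\le PK/K$ from (i) gives $\bar D=PK/K$, and then (ii) yields $h(\chi)=h(\bar\chi)$.

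The main obstacle is locating the precise citable forms in \cite{N2} of the domination theorem used in (i) and of the ``reduction modulo a normal subgroup'' statements for defect groups used in (iii); once these are in hand the rest is routine bookkeeping with the height formula, the one genuinely substantive input being Kn\"orr's theorem to obtain $P\cap K\in\Syl_p(K)$.
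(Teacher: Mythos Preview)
Your argument is correct and, for parts (i) and (ii), essentially identical to the paper's: the paper cites Theorem~9.9 of \cite{N2} for (i), and for (ii) uses Theorem~9.2 of \cite{N2} (that $B$ covers the principal block of $K$) together with Theorem~9.26 of \cite{N2} (which is the Kn\"orr result you invoke) to get $P\cap K\in\Syl_p(K)$, followed by exactly the height--formula bookkeeping you give.

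The only genuine difference is in (iii). The paper simply cites Lemma~2.2 of \cite{R} and stops, whereas you supply a self-contained proof: first modding out the central $p'$-part $K_{p'}$ (using that $B$ covers only the principal block of the central subgroup $K$, so $K_{p'}$ lies in the kernel of every $\psi\in\irr B$, and defect groups are preserved under reduction by a normal $p'$-subgroup), then handling the central $p$-subgroup case via the standard bijection between blocks of $G$ and $G/K$ with defect groups $P\leftrightarrow P/K$. This is exactly what the cited lemma of Rizo does, so your approach is not really different in spirit---it simply unpacks the reference. The citations you are looking for in \cite{N2} are Theorem~9.9 (domination and defect groups), Theorem~9.10 (blocks modulo a central $p$-subgroup), and Theorem~9.26 (Kn\"orr).
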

    
    \begin{proof}
    The first part is Theorem 9.9 of \cite{N2}. Since $\chi$ lies over $1_K$, it follows that $B$
    covers the principal block of $K$, by Theorem 9.2 of \cite{N2}. By Theorem 9.26 of \cite{N2},
    we have that $P\cap K \in \syl pK$, so $|K:K\cap P|_p=1$. 
    Now, 
    $$\chi(1)_p=\bar\chi(1)_p={|G/K|_p \over |PK/K|} {|PK/K|\over |\bar D|}p^{h(\bar \chi)}={|G|_p\over |P|}{|PK/K|\over |\bar D|}p^{h(\bar \chi)} \, ,$$
    and we use the definition of $h(\chi)$.
    The third part follows from Lemma 2.2 of \cite{R}.
    \end{proof}

%

\begin{lem}\label{hct}
    Suppose that $G^*$ is a finite group, $N, Z \nor G^*$ such that $N\cap Z=1$, where $Z \sbs \zent{G^*}$.
    Let $N^*=N \times Z$. Let $\theta \in \irr N$ be $G$-invariant, and $\lambda \in \irr Z$.
    Let $ \theta^*=\theta \times 1_Z$, $ \lambda^*=1_N \times \lambda$,
    and assume that  $(\lambda^*)^{-1} \theta^*$ extends to some $\tau \in \irr{G^*}$. Then the map $\chi^* \mapsto \chi^*\tau$ defines a character triple isomorphism $(G^*, N^* , \lambda^*) \rightarrow  (G^*, N^*,  \theta^*)$. Let $\chi^* \in \irr{G^* | \lambda^*}$.
    If $\chi=\chi^*\tau$ has height zero in $G^*$, then $\chi^*$ has height zero in $G^*/N$.
    \end{lem}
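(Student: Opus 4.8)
The plan is to obtain the character triple isomorphism from the standard ``multiplication by a character'' construction, to observe that this isomorphism is block-compatible, and then to read off the height statement by comparing $p$-heights across it and invoking Lemma~\ref{r}. Throughout, $\theta$ is understood to be $G^*$-invariant. Since $Z\sbs\zent{G^*}$, the characters $\lambda^*=1_N\times\lambda$ and $\theta^*=\theta\times 1_Z$ of $N^*$ are $G^*$-invariant, $\lambda^*$ is linear, and $\tau$ restricts to $\tau_{N^*}=(\lambda^*)^{-1}\theta^*=\theta\times\lambda^{-1}\in\irr{N^*}$. First I would invoke the fact that multiplication by a character $\tau$ of $G^*$ whose restriction to $N^*$ is irreducible carries $\irr{G^*|\lambda^*}$ bijectively onto $\irr{G^*|\lambda^*\tau_{N^*}}=\irr{G^*|\theta^*}$ and, more generally, defines a character triple isomorphism $(G^*,N^*,\lambda^*)\to(G^*,N^*,\theta^*)$; see the treatment of character triples in \cite{N3}. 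The crucial point for the height statement is that this isomorphism is a \emph{block} isomorphism: corresponding characters lie in $p$-blocks of $G^*$ whose defect groups have the same order modulo $N^*$. Since the ``bottom'' characters $\lambda^*$ and $\theta^*$ are $G^*$-invariant, the intersection of such a defect group with $N^*$ is then automatically a defect group of the block of $\lambda^*$, resp.\ of $\theta^*$, in $N^*$, by Kn\"orr's theorem.

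Next, fix $\chi^*\in\irr{G^*|\lambda^*}$, regard it also as a character of $G^*$, and set $\chi=\chi^*\tau$, assuming $h(\chi)=0$. A routine bookkeeping computation shows that a block isomorphism of character triples of the form $(G^*,N^*,\lambda^*)\to(G^*,N^*,\theta^*)$ preserves the quantity $h(\psi)-h(\vartheta)$, where $\psi$ lies over the irreducible character $\vartheta$ of the second component: one writes each degree as $|G^*|_p$ or $|N^*|_p$ divided by the order of the relevant defect group times $p$ to the height, and uses the block-isomorphism compatibility of defect groups together with the equality of the degree ratios $\chi(1)/\theta^*(1)=\chi^*(1)/\lambda^*(1)$. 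Here $\lambda^*$ is linear, so $h(\lambda^*)=0$; and $\theta^*=\theta\times 1_Z$ lies in the block of $N^*=N\times Z$ built from the block $b$ of $\theta$ and the principal block of $Z$, whose defect group is $(\text{defect group of }b)\times Z_p$, so that $h(\theta^*)=h(\theta)$. Therefore $h(\chi)=h(\chi^*)+h(\theta)$, and since $h(\chi)=0$ we conclude $h(\chi^*)=0$ in $G^*$ (and, incidentally, $h(\theta)=0$). Finally, $\chi^*$ lies over the linear $G^*$-invariant character $\lambda^*$, so $\chi^*_{N^*}=\chi^*(1)\lambda^*$ and hence $N\sbs\ker{\chi^*}$; applying Lemma~\ref{r}(ii) with $K=N$ to the height zero character $\chi^*$ of $G^*$ gives that $\chi^*$ has $p$-height zero as a character of $G^*/N$, which is the assertion.

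The routine ingredients here are the degree bookkeeping for the ``preserves $h(\psi)-h(\vartheta)$'' step, the computation $h(\theta^*)=h(\theta)$, and the final application of Lemma~\ref{r}. The step I expect to be the main obstacle is the claim that the multiplication-by-$\tau$ character triple isomorphism is a block isomorphism --- concretely, that if $\chi^*\in\irr{B^*}$ and $\chi=\chi^*\tau\in\irr B$ for $p$-blocks $B^*,B$ of $G^*$, then $B^*$ and $B$ have defect groups with images of equal order in $G^*/N^*$. If this block-compatibility is not available from the literature in exactly the form needed, I would establish it directly from the definition of a block isomorphism, the heart of the matter being that the character bijection $\psi\mapsto\psi\tau$ is compatible with reduction modulo $p$.
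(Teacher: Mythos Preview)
Your approach is workable but takes a different and longer route than the paper. The paper does not pass through the height of $\chi^*$ as a character of $G^*$ at all: it works directly in $G^*/N$. Concretely, it lets $D^*/N$ be a defect group of the block of $\chi^*$ in $G^*/N$ and invokes Proposition~2.5(b) of \cite{NS} to obtain the single containment $D^*/N \sbs DN/N$, where $D$ is a defect group of the block of $\chi$ in $G^*$. Then, using that $\theta$ has height zero (Theorem~\ref{ht0}) and that $D\cap N$ is a defect group of the block of $\theta$, one computes $\chi^*(1)_p=(\chi(1)/\theta(1))_p=|G^*:DN|_p$, and a one-line comparison with $|G^*/N:D^*/N|_p$ forces $h(\chi^*)=0$ in $G^*/N$. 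No appeal to Lemma~\ref{r} and no block-isomorphism machinery are needed.

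Your route --- establish that $\psi\mapsto\psi\tau$ is a block isomorphism of character triples, deduce $h(\chi)-h(\theta^*)=h(\chi^*)-h(\lambda^*)$ in $G^*$, and then descend via Lemma~\ref{r}(ii) --- can be made to work, and the block-compatibility you need is indeed available (essentially the content of the relevant parts of Proposition~2.5 in \cite{NS}). But your proposed justification for it, namely that ``$\psi\mapsto\psi\tau$ is compatible with reduction modulo $p$'', is not the right mechanism: compatibility with decomposition does not by itself control defect groups. The standard arguments go through central characters and the explicit defect-group comparisons in \cite{NS} or in Sp\"ath's framework --- which is exactly what the paper is citing directly. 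So your detour reintroduces, in heavier packaging, the very input the paper uses in one line, and then adds an extra step (Lemma~\ref{r}) that the direct computation avoids.
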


    \begin{proof}      We have that $\tau_{N}=\theta$.  The fact that  $\chi^* \mapsto \chi^*\tau$ defines a character triple isomorphism follows 
    from Lemma 11.27 of \cite{I}.
    
  Let $\chi^*\in \irr{G^* |\lambda^*}$. Then $N\sbs \ker{\chi^*}$ and we can see $\chi^*$
    has a character of $G^*/N$ lying over $\lambda$ (identified with a character of $N^*/N$).
    Assume that $\chi=\chi^*\tau \in \irr{G^* |\theta^*}$ has height zero in $G^*$,  we want to prove that
    $\chi^*$ has height zero in $G^*/N$. 
    Let $B$ be the $p$-block of 
    $G^*/N$ that contains $\chi^*$,  and let $D^*/N$
    be a defect group of $B$. By Proposition 2.5(b) of \cite{NS}, $D^*/N$ is contained in $DN/N$,
    where $D$ is a defect group of $\chi$. 
%
    Since $\theta$ is an irreducible constituent of $\chi_N$ and $\chi$ has height zero, 
    by Theorem \ref{ht0}, we know that   $\theta$ has height zero. By   Theorem 9.26 of \cite{N2}, 
    we know that $D\cap N$ is a defect group of the block of $\theta$.
Therefore:
    $$\chi^*(1)_p=(\chi(1)/\theta(1))_p= |G^*:DN|_p \, .$$
    
  By definition,  $$\chi^*(1)_p=|G^*/N: D^*/N|_p p^{h(\chi^*)} \ge |G^*/N:DN/N|_pp^{h(\chi^*)} =
   |G^*:DN|_p p^{h(\chi^*)}\, .$$
We conclude that $p^{h(\chi^*)}=1$, as wanted.
   \end{proof}

    Next we use the theory of character triples, as developed in \cite[Chapter 11]{I}.
    Recall that if $G/N$ is a group, by \cite[Theorem 11.17]{I} there exists a finite central
    extension $(\Gamma, \pi)$ of $G/N$ such that $A=\ker\pi\cong M(G/N)$
    and the standard map $\eta \colon \irr A \to M(G/N)$ is an isomorphism. 
    In particular, by \cite[Theorem 11.19]{I}, if $G/N$ is perfect then $\Gamma$ is perfect.
    We will also make use of the results contained in \cite[Section 3]{GP}. 

    \begin{thm}\label{ct}
    Suppose that $(G,N, \theta)$ is a character triple, where $\theta \in \irr N$     and $G/N$ is perfect.   Then there exists a character triple isomorphism
    $$(G,N,\theta) \rightarrow (\Gamma,A,\lambda), $$ where $\Gamma$ is perfect, $A=\zent{\Gamma}$,
    $\Q(\lambda)\sbs \Q(\theta)$ and $\Q(\chi)=\Q(\chi^*,\theta)$ for every $\chi \in \irr{G|\theta}$, where $\chi^*$ corresponds to $\chi$
    under the character triple isomorphism. 
     Furthermore, if $\chi$ has height zero in $G$, then $\chi^*$ has height zero in $\Gamma$.
       \end{thm}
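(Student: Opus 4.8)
The plan is to combine the standard construction of a projective representation with the height-zero bookkeeping lemma already proved as Lemma~\ref{hct}. First I would recall the setup of \cite[Chapter~11]{I}: since $G/N$ is perfect, choose a finite central extension $(\Gamma_0,\pi)$ of $G/N$ with $A_0=\ker\pi\cong M(G/N)$, so that $\Gamma_0$ is perfect by \cite[Theorem~11.19]{I}. A projective representation of $G$ associated to $\theta$ has a factor set which, pulled back along $G\to G/N$, is cohomologous to one coming from $A_0$; this produces, following the argument of \cite[Theorem~11.28]{I} (and the refinements in \cite[Section~3]{GP}), a group $G^*$ fitting into a diagram with $N^*=N\times Z$ normal, where $Z$ is a central subgroup isomorphic to (a quotient of) $A_0$, together with a linear character $\lambda$ of $Z$ and a character $\tau\in\irr{G^*}$ extending $(\lambda^*)^{-1}\theta^*$ in the notation of Lemma~\ref{hct}. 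Here $\Gamma=G^*/N$ is the central extension we want: $A=N^*/N\cong Z$ is central in $\Gamma$, $\Gamma$ is perfect because $G/N$ is perfect and $Z\le M(G/N)$, and $\lambda$ (viewed on $A$) is the required character.

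Next I would verify the character triple isomorphism and the field-of-values claims. Lemma~\ref{hct} already gives a character triple isomorphism $(G^*,N^*,\lambda^*)\to(G^*,N^*,\theta^*)$ via $\chi^*\mapsto\chi^*\tau$; composing with the tautological isomorphism $(G,N,\theta)\to(G^*,N^*,\theta^*)$ (coming from the construction of $G^*$, which is built to carry $(G,N,\theta)$ faithfully) and with the deflation isomorphism $(G^*,N^*,\lambda^*)\to(G^*/N,N^*/N,\lambda)=(\Gamma,A,\lambda)$, one obtains the desired isomorphism $(G,N,\theta)\to(\Gamma,A,\lambda)$. The control on fields is the point where care is needed: the construction of $\tau$ can be arranged so that $\Q(\tau)\subseteq\Q(\theta)$ — this is exactly the content of the field-of-values refinements in \cite[Section~3]{GP}, where one shows the factor set and the chosen extension are defined over $\Q(\theta)$ — and hence $\Q(\lambda)\subseteq\Q(\tau_Z)\subseteq\Q(\theta)$. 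Then for $\chi\in\irr{G|\theta}$ corresponding to $\chi^*\in\irr{\Gamma|\lambda}$, unravelling the isomorphisms gives $\chi=\chi^*\tau$ (identifying $\chi^*$ with its inflation to $G^*$), so $\Q(\chi)\subseteq\Q(\chi^*,\tau)\subseteq\Q(\chi^*,\theta)$; the reverse inclusion $\Q(\chi^*)\subseteq\Q(\chi)$ follows because $\chi^*$ is recovered from $\chi$ via $\chi^*\tau=\chi$ together with $\Q(\tau)\subseteq\Q(\theta)\subseteq\Q(\chi)$ (using $\theta=\chi_N$ up to the identification). Hence $\Q(\chi)=\Q(\chi^*,\theta)$.

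Finally, the height-zero statement is immediate: if $\chi$ has height zero in $G$, then under the identification $\chi=\chi^*\tau$ it has height zero in $G^*$, and Lemma~\ref{hct} says precisely that $\chi^*$ then has height zero in $G^*/N=\Gamma$. So nothing new is required for the last sentence of the theorem once the construction is in place.

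The main obstacle I anticipate is not the height computation — that is handed to us by Lemma~\ref{hct} — but the field-of-values control, i.e.\ arranging the central extension $G^*$, the character $\tau$, and the linear character $\lambda$ so that everything is defined over $\Q(\theta)$ and so that $\Q(\chi)=\Q(\chi^*,\theta)$ holds on the nose rather than just up to a cyclotomic twist. This requires invoking the rationality-aware version of the character-triple machinery (the relevant statements from \cite[Section~3]{GP}) rather than the bare version in \cite[Chapter~11]{I}, and checking that the projective representation of $G/N$ chosen to build $\Gamma$ can be taken with values in $\Q(\theta)$; I would isolate this as the one genuinely non-formal step and cite \cite{GP} for it.
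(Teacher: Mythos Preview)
Your approach is essentially the paper's: both build $(\Gamma,A,\lambda)$ via the \cite{GP} refinement of the construction in \cite[Theorem~11.28]{I}, both route the height-zero statement through Lemma~\ref{hct}, and both rely on the rationality control in \cite[Section~3]{GP}.

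Two small points deserve attention. First, where you write ``if $\chi$ has height zero in $G$, then under the identification $\chi=\chi^*\tau$ it has height zero in $G^*$'', you are silently using that inflating $\chi$ from $G\cong G^*/A$ to $G^*$ preserves the height. This is not automatic (heights can change under inflation, cf.\ the remarks before Lemma~\ref{r}); it holds here because $A$ is central in $G^*$, and the paper explicitly invokes Lemma~\ref{r}(iii) for this step. You should cite it too. Second, the paper phrases the field-of-values control not via $\Q(\tau)\subseteq\Q(\theta)$ but via the Galois-equivariance statement $(\chi^*)^\sigma=(\chi^\sigma)^*$ for $\sigma\in\mathrm{Gal}(\Q_{|G|}/\Q(\theta))$, quoted as \cite[Theorem~3.6]{GP}, from which $\Q(\chi)=\Q(\chi^*,\theta)$ drops out in one line. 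Your formulation via $\Q(\tau)\subseteq\Q(\theta)$ reaches the same conclusion and is morally equivalent, but you should verify that \cite{GP} really hands you control of $\Q(\tau)$ directly; if not, the equivariance formulation is cleaner and avoids the issue.
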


    \begin{proof}
    We consider a  {\em canonically constructed} character triple $(\Gamma, A, \lambda)$ isomorphic to $(G, N, \theta)$ in the sense of \cite{GP}. Notice that the values of any $\psi \in \irr{\Gamma|\lambda}$
    are in $\Q_{|G|}$. (See the paragraph before Corollary 3.3 of \cite{GP}.)
    By Theorem 3.6 of \cite{GP}, we have  that whenever 
    $\sigma \in {\rm Gal}( \Q_{|G|}/\Q(\theta))$, then $(\chi^*)^\sigma=(\chi^\sigma)^*$.
    Hence $\Q(\chi)=\Q(\chi,\theta)=\Q(\chi^*,\theta)$, as wanted.
    We do notice that $\Gamma$ is perfect using Theorem 11.19 of \cite{I}. 
    For the second part, we notice that
    the construction of the character triple isomorphism in \cite{GP} follows the construction in Theorem 11.28 of \cite{I}.
    We have that $(G, N, \theta)$ isomorphic to $(G^*,N^*, \theta^*)$ where $G^*\sbs G\times \Gamma$, $N^*=N\times A$, $A \sbs \zent{G^*}$ and $\theta^*=\theta\times 1_A$; in fact
    $G\cong G^*/A$.
    Also $(G^*, N^*, \theta^*)$ is isomorphic to $(G^*, N^*, \lambda^*)$ where $\lambda^*=1_N\times \lambda$ (here $\theta^*(\lambda^*)^{-1}$ extends to $\tau \in \irr{G^*}$).
    Finally $(G^*, N^*, \lambda^*)$ is isomorphic to $(\Gamma, A, \lambda)$ using that $\Gamma\cong G^*/N$.     
    Given $\chi \in \irr{G|\theta}$ of height zero in $G\cong G^*/A$, the first character triple isomorphism just sends $\chi$ to $\chi$ viewed as a character of $G^*$. By 
    Lemma \ref{r}(iii), we have that $\chi$ has height zero as a character of $G^*$. By Lemma \ref{hct} we have that $\chi=\chi^*\tau$, and $\chi^*\in \irr{G^*| \lambda^*}$ has height  zero in $G^*/N$. Since the last character triple isomorphism just sends $\chi^*$ to $\chi^*$ seen as a character of $\Gamma\cong G^*/N$, we have that $\chi^*$ has height zero    
    as a character of $\Gamma$, and the second part of the statement follows. \end{proof}
 


We believe that the following result might be useful in the future.
(It can be used, together with the ideas of the proof of Theorem 5.1 of \cite{NTT}, as an alternative to Theorem \ref{ct} in the proof of our main result.) %
%
%

      \begin{thm}\label{gg}
    Suppose that $\chi \in \irr G$ has 2-height zero. Let $n=|G|_{2'}$, $F=\Q_n(\chi)$. Then $\chi$ can be afforded by an absolutely irreducible
     $F$-representation.
    \end{thm}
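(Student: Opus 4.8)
The plan is to show that $\chi$, which has $2$-height zero and field of values inside $F=\Q_n(\chi)$ with $n=|G|_{2'}$, has Schur index $1$ over $F$. Recall that the Schur index $m_F(\chi)$ divides $\chi(1)$ (Roquette's theorem, more precisely the local-global description of the Schur index via the local indices $m_{F_v}(\chi)$, whose least common multiple is $m_F(\chi)$). Since $\chi$ has $2$-height zero in its block $B$ with defect group $D$, we have $\chi(1)_2 = |G:D|_2$; in particular the $2$-part of $\chi(1)$ is as small as possible among characters in $B$. The key point is that the Schur index can only be divisible by a prime $\ell$ if $\ell \mid \chi(1)$, so $m_F(\chi)$ is a power of $2$ times an odd number, and the odd part of $m_F(\chi)$ divides $\chi(1)_{2'}$.

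First I would dispose of the odd part of the Schur index: since $F \supseteq \Q_n$ with $n = |G|_{2'}$, $F$ contains all $2'$-roots of unity in $\Q_{|G|}$, and a theorem of Brauer--Witt / Benard--Schacher type (or simply: over a field containing enough roots of unity relative to the odd part, the odd-prime local indices vanish) forces the odd part of $m_F(\chi)$ to be $1$. Concretely, for each odd prime $\ell$ the $\ell$-local index $m_{F_v}(\chi)$ is controlled by cyclotomic extensions of $\ell$-power degree sitting inside $\Q_{|G|}$; since $F$ already contains $\Q_n$ and $n$ carries the full prime-to-$2$ cyclotomic content of $|G|$, those $\ell$-local indices are trivial. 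So $m_F(\chi)$ is a power of $2$.

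Second, and this is where $2$-height zero enters decisively: I would argue $m_F(\chi)$ is odd, hence equal to $1$. Suppose $2 \mid m_F(\chi)$. By the theory of Schur indices over local fields, a $2$ in the Schur index would, via Brauer's characterization / the fact that the Schur index equals the local index at some prime, produce an irreducible $\overline{\FF}_2 G$-module or a modular obstruction forcing $\chi(1)_2$ to be strictly larger than $|G:D|_2$ — more precisely, one uses that for a $2$-block of defect group $D$, a height zero character is afforded by a representation realizable over the unramified (or the appropriate) extension with residue characteristic $2$, so there is no $2$-local obstruction. The cleanest route is: reduce via Theorem \ref{cliff} and the Clifford-theoretic machinery (Clifford correspondence, the $T^*$ argument) to the case where the relevant inertia data is as small as possible, and then invoke that for $2$-blocks the height zero characters have trivial $2$-local Schur index because the defect group, being a $2$-group, governs the ramification at the prime $2$ and a height zero character sees only an unramified piece. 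In that reduced situation $\chi$ is, up to the isomorphism, a character whose block has the prescribed defect group and whose degree hits the minimal $2$-power, which is incompatible with a nontrivial $2$-adic division algebra component.

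The main obstacle I anticipate is precisely this second step: making rigorous the claim that $2$-height zero kills the $2$-local Schur index over a field containing $\Q_{|G|_{2'}}$. The naive statement "$m_F(\chi) \mid \chi(1)$ and $\chi(1)_2 = |G:D|_2$" does not by itself give $m_F(\chi)$ odd when $|G:D|_2 > 1$; one genuinely needs a block-theoretic input saying that the $2$-part of the Schur index is bounded by $|D|$-related ramification and that height zero forces it to $1$. I would look to adapt the argument of \cite[Theorem 5.1]{NTT} cited in the excerpt, or to use the fact that a height zero character lies in a block whose source algebra / basic algebra is defined over a small $2$-adic ring, so that $\chi$ is realizable over a field unramified at $2$ — and $F$, containing $\Q_n$, already contains that field after adjoining $\Q(\chi)$. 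Combining "odd part trivial" with "$2$-part trivial" yields $m_F(\chi)=1$, i.e. $\chi$ is afforded by an absolutely irreducible $F$-representation, which is the assertion.
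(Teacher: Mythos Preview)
Your overall strategy is the same as the paper's: show $m_F(\chi)=1$, and note immediately that $m_F(\chi)$ is a power of $2$ (the paper gets this in one stroke from \cite[Corollary 10.13]{I}, which already gives $m_F(\chi)\le 2$ over a field containing $\Q_{|G|_{2'}}$). So your ``first step'' is fine, if a bit more laboured than necessary.

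The genuine gap is your second step. You correctly flag it yourself: nothing you wrote actually proves that $2\nmid m_F(\chi)$. The observation $m_F(\chi)\mid \chi(1)$ gives nothing once $|G:D|_2>1$, and the appeals to ``source algebras over small $2$-adic rings'', ``unramified pieces'', or ``modular obstructions forcing $\chi(1)_2>|G:D|_2$'' are not arguments --- there is no theorem being invoked, and none of these heuristics can be made precise without essentially reproving the result. The references to Theorem~\ref{cliff} and \cite{NTT} do not supply the missing mechanism either.

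The paper's argument for this step is concrete and quite different in spirit from what you sketch. It is an \emph{odd-multiplicity} argument: one passes to $H=\norm GD$ and uses Lemma~\ref{index2lem} to find a height-zero $\psi\in\irr H$ with $\Q(\psi)\subseteq F$ and $[\chi_H,\psi]$ odd; since $m_F(\chi)\mid[\psi^G,\chi]$ by \cite[Corollary 10.2(c)]{I}, it suffices to realise $\psi$ over $F$. After a Clifford-theoretic reduction (Theorem~\ref{cliff}) to the quasiprimitive case and an application of Lemma~\ref{qp}, one computes $\psi_P$ explicitly for $P\in\syl 2C$ and exhibits a \emph{linear} character $\mu$ with $[\psi_P,\mu^P]$ odd. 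Linear characters are trivially realisable over $F$, so again \cite[Corollary 10.2(c)]{I} forces $m_F(\psi)$ to be odd, hence $1$. The engine is thus repeated use of the divisibility $m_F(\chi)\mid[\chi,\eta^G]$ for $F$-realisable $\eta$, together with the block-theoretic Lemmas~\ref{index2lem} and~\ref{qp} to produce constituents of odd multiplicity --- not any local-index or source-algebra analysis.
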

    
    \begin{proof}
    We want to show that $m_F(\chi)=1$.
    We know by Corollary 10.13 of \cite{I} that $m_F(\chi) \le 2$.  Suppose that $D$ is a defect group of the block
    of $\chi$, and let $H=\norm GD$. Let $C=\cent GD$ and $Z=\zent D$.
    By Lemma \ref{index2lem},
     we have that $\chi_H$ contains some $\psi \in \irr H$ with 2-height zero, $\Q(\psi)\sbs F$
    and $[\chi_H, \psi]$ is odd. If we can show that $\psi$ is afforded by
    an $F$-representation, then so it is $\psi^G$ and by  Corollary 10.2(c) of \cite{I}
   we have that $m_F(\chi)$ divides $[\psi^G, \chi]=[\psi,\chi_H]$ which is odd.
   
      We work thus to show that $\psi$ can be afforded by an $F$-representation.
      By Theorem \ref{cliff}, and arguing by induction on $|G|$, we may assume that $\psi$ is quasiprimitive. In particular, as $DC\nor H$, we have
       that $\psi_{DC}$ is homogeneous.  By Lemma \ref{qp} we have that $H/CD$ is an odd-order group, and the canonical character $\theta\in \irr{C/Z}$
       of the block of $\psi$ is $H$-invariant. 
        Then $\psi_{DC}=e \nu$ is an $F$-valued character and $\eta\in \irr {CD}$. Notice that by Corollary 11.29 of \cite{I} $e$ is odd. 
        Using again Lemma \ref{qp}, we have that $\eta=\theta_\lambda$, where $\lambda \in \irr{D}$ is linear.
    On the other hand, as $CD$ is a central product of $C$ and $D$, we have that $\eta=\nu \cdot \lambda$, where $\nu \in \irr{C|\mu}$ and $\lambda \in \irr{D|\mu}$ being $\mu=\lambda_Z$. In particular, $\eta_C=\nu$.

    We claim that $\eta$ can be afforded by an $F$-representation. Let $P$ be
    a Sylow $p$-subgroup of $C$. Let $x \in P-Z$. As $\nu=\eta_C=(\theta_\lambda)_C$, using the values of $\theta_\lambda$
    we have that
     $\nu(x)=0$. If $x \in Z$, then $\nu(x)=\theta(1)\lambda(x)$. Therefore
    $$\nu_P={\theta(1) \over |P:D|} \mu^P \, .$$
    Notice that ${\theta(1) \over |P:D|}$ is not divisible by 2 because $\theta \in \irr{C/Z}$ has
    2-defect zero.  Finally, since $\psi_{DC}=e \eta$, where $e$ is odd, then
    $\psi_C=e\nu$ and thus $[\psi_P, \mu^P]$ is odd. Hence $[\psi, \mu^H]$ is odd. 
    Since $\mu$ is afforded by an $F$-representation, we have that $m_F(\psi)$ is odd, by Corollary 10.2(c) of \cite{I}. 
    But Corollary 10.13 of \cite{I} implies that $m_F(\chi) \le 2$. Therefore $m_F(\psi)=1$, and this completes the proof.
    \end{proof}
    
 According to M. Geline, Theorem \ref{gg} can also be proved by using the main theorem of \cite{GG} and 
some number theoretical arguments. We notice that the case where $\chi$ has odd degree of Theorem \ref{gg} follows from a theorem of Fong
\cite[Corollary 10.13]{I} using \cite[Corollary 10.2(h)]{I}.  
    
      \section{Fields of Characters in $\mathcal F_p$}
     
     We briefly pause  in our journey to the proof of Theorem A and the reduction theorem for Conjecture D to show the easy containments in Theorem B and Conjecture D. In other words,
     we show that every number field in $\mathcal F_p$ is the field of values of some irreducible character of $p$-height zero.
     We will also show that the statement of Conjecture D is implied by the statement of \cite[Conjecture B]{N1}.

 \begin{thm}\label{G}
 Suppose that $\Q \sbs F\sbs \Q_{n}$,
where $n$ is the conductor of $F$. Write $n=p^a m$, where $m$ is not divisible by $p$.
If $|\Q_n:\langle F, \Q_m\rangle|$ is not divisible by $p$, then there is a solvable group $G$ and
$\chi \in \irr G$ of $p$-height zero such that $\Q(\chi)=F$.
\end{thm}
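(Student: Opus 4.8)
The plan is to construct $G$ explicitly as a semidirect product of an abelian $p'$-group by a cyclic $p$-group, mimicking the kind of construction used in \cite{NT} to realize prescribed fields of values via characters of degree not divisible by $p$, and then reinterpret the resulting character as a $p$-height zero character (which is automatic in the split metabelian setting, since a block with abelian defect group $D\le G$ and $G/D$ a $p'$-group has all its irreducible characters of height zero, and more generally height zero will be easy to read off in the groups we build). First I would reduce to the case where $F$ is generated over $\Q_m$ by a single root of unity of $p$-power order, or rather analyze the structure of $\mathrm{Gal}(\Q_n/\Q_m)\cong\mathrm{Gal}(\Q_{p^a}/\Q_p)$ (cyclic if $p$ is odd, almost cyclic if $p=2$) together with the hypothesis that $|\Q_n:\langle F,\Q_m\rangle|$ is prime to $p$: this forces $\langle F,\Q_m\rangle$ to contain $\Q_{p^a}$ when $p$ is odd, and in the $p=2$ case it pins down which of the index-$2$ subfields $\langle F,\Q_m\rangle$ can be, essentially recovering the subtlety behind Corollary C (the real subfield $\Q_{2^a}^+$ and $\Q(\zeta_{2^a}\!-\!\zeta_{2^a}^{-1})$ are the dangerous ones and are excluded).

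Next I would build the group. Take a cyclic group $P=\langle y\rangle$ of order $p^a$ (order $2^a$ when $p=2$), and let it act on a suitable abelian $p'$-group $A$ — concretely, $A$ can be taken to be a direct product of cyclic groups $\langle x_i\rangle$ of prime orders $q_i\equiv 1\pmod{p^{a}}$ (such primes exist by Dirichlet), with $P$ acting on each factor through a faithful character of order $p^{a}$ or an appropriate divisor; one also throws in a cyclic $p'$-factor $C_m$ on which $P$ acts trivially so that $\Q_m$ enters the field of values. Then $G=A\rtimes P$ (times $C_m$, or with $C_m$ folded into $A$). A linear character $\mu$ of $A$ extended by $m$-th roots of unity, induced up to $G$, produces an irreducible character $\chi$ whose values are governed by the trace from $\Q(\mu)$ to its $P$-fixed field together with $\Q_m$; by choosing the orbit of $\mu$ and the order-$p^a$ action carefully, $\Q(\chi)$ becomes exactly $\langle \Q_m, \text{(prescribed }p\text{-part)}\rangle = F$. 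This is where the case analysis of the previous paragraph feeds in: the identity $\langle F,\Q_m\rangle\supseteq\Q_{p^a}$ (odd $p$) or the precise description in the $p=2$ case tells us which Galois-stable sum of $p^a$-th roots of unity we need to realize, and Gauss-sum/period computations in $\Q_{p^a}$ give it.

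The main obstacle I expect is the $p=2$ bookkeeping: when $a\ge 2$, $\mathrm{Gal}(\Q_{2^a}/\Q)$ is not cyclic, so "$|\Q_n:\langle F,\Q_m\rangle|$ odd" does not simply say $\langle F,\Q_m\rangle=\Q_n$; it allows $\langle F,\Q_m\rangle$ to be one of the three index-$2$ subfields of $\Q_{2^a}$ over $\Q_m$. Realizing $\Q(\sqrt d)$ with $d$ odd squarefree is classical (quaternion-type or dihedral-type extensions of abelian groups), but realizing an arbitrary such $F$ with $2$-part of the conductor equal to $2^a$ and $\langle F,\Q_m\rangle$ the "correct" index-$2$ subfield requires choosing the action of $P=C_{2^a}$ on $A$ so that the induced character's field of values lands in the $\langle i\rangle$-side rather than the real side of $\Q_{2^a}$; concretely this means arranging that complex conjugation does not fix $\chi$, which in turn is a condition on whether the defining linear character $\mu$ is $G$-conjugate to its inverse. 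I would handle this by a direct construction in each of the (finitely many) Galois-theoretic cases, checking height zero at the end via Theorem \ref{ht0}(ii) and the remark that in $A\rtimes C_{2^a}$ every irreducible character induced from a linear character of the abelian normal $2$-complement automatically has $2$-height zero in its block.
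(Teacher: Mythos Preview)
Your construction has a genuine gap. In $G = A \rtimes P$ with $A$ an abelian $p'$-group and $P$ cyclic of order $p^a$, any character induced from a linear $\mu \in \irr A$ takes all its values in $\Q_{|A|}$, a cyclotomic field of conductor coprime to $p$; hence $c(\chi)_p = 1$, and you can never reach an $F$ with $a \geq 1$. More generally, every $\chi \in \irr{G|\mu}$ has the form $(\hat\mu\beta)^G$ with $\hat\mu$ the canonical extension of $\mu$ to $G_\mu$ and $\beta \in \irr{P_\mu}$, so $\Q(\chi) \subseteq \Q(\mu)\cdot\Q_{|P_\mu|}$; forcing $c(\chi)_p = p^a$ then requires $P_\mu = P$, whence $\chi = \hat\mu\beta$ is linear and $\Q(\chi)$ is a full cyclotomic field containing $\Q_{p^a}$. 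But a typical $F \in \mathcal F_p$ --- say $F = \Q(\sqrt 3)$ for $p = 2$ --- does not contain $\Q_{p^a}$. The ``dihedral-type'' groups you invoke for $\Q(\sqrt d)$ are precisely \emph{not} of the shape $(p'\text{-group}) \rtimes (p\text{-group})$: realizing $\Q(\sqrt 3)$ needs a normal cyclic subgroup of order divisible by $4$. (Incidentally, your preliminary claim that the hypothesis forces $\Q_{p^a} \subseteq \langle F, \Q_m\rangle$ for odd $p$ is also false: take $F$ the maximal real subfield of $\Q_9$ with $p=3$, $m=1$; then $[\Q_9:F]=2$ is coprime to $3$, yet $\Q_9\not\subseteq F$.)

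The paper's argument avoids all of this by reversing the roles. Take $G = C_n \rtimes H$ with $C_n$ cyclic of order $n$ (the full conductor, including its $p$-part) and $H = \mathrm{Gal}(\Q_n/F)$ acting naturally. For $\lambda \in \irr{C_n}$ faithful one has $\chi = \lambda^G$ irreducible with $\Q(\chi) = F$ automatically --- this is the standard realization of an abelian number field as a field of character values, and no case analysis on $p$ or on $\langle F,\Q_m\rangle$ is needed. Height zero is then read off from the hypothesis in one line: with $\nu = \lambda_{C_m}$, the stabilizer $H_\nu$ equals $\mathrm{Gal}(\Q_n/\langle F, \Q_m\rangle)$, which has $p'$-order by assumption, so $G_\nu = C_n H_\nu$ and Fong--Reynolds gives that $\chi$ has height zero because $\lambda^{G_\nu}$ has $p'$-degree. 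The structural point you missed is that the normal abelian subgroup must carry the $p$-part of $n$, and it is the size of the acting complement (not of a Sylow $p$-subgroup) that the hypothesis controls.
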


\begin{proof}
Let  $\zeta_n$ be a primitive $n$-th root of unity, and let
$C_n=\langle \zeta_n\rangle$ be the cyclic group of order $n$, which is
 acted on faithfully by $\mathcal G={\rm Gal}(\Q_n/\Q)$. Let $G$ be the semidirect product of
 $C_n$ with $H={\rm Gal}(\Q_n/F)\leq \mathcal G$. 
 If $\lambda \in \irr {C_n}$
 has order $n$, then $\lambda^G=\chi\in \irr G$ has field of values $F$.
 Let $\nu=\lambda_{C_m}$, where $C_m\leq C_n$ has order m. 
 Then $\nu$ has order $m$. 
 Notice that 
 $H_\nu={\rm Gal}(\Q_n/\langle F, \Q_m\rangle)$ has order not divisible by $p$ by hypothesis.
 Thus $G_\nu=C_nH_\nu$. By the Fong-Reynolds Theorem 9.14 of \cite{N2}, it follows that $\chi$
 has height zero if and only if $\lambda^{G_\nu}$ has height zero, which it has, because it has degree
 not divisible by $p$.
 \end{proof}

We can also prove one of the implications in Corollary C. 
 
\begin{cor}\label{onCorollaryC}
Suppose that $d \neq 1$ is an odd square-free integer.
Then there is a  group $G$ and a $2$-height zero character $\chi \in \irr G$ such that $\Q(\chi)=\Q(\sqrt d)$.
\end{cor}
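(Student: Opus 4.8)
The plan is to obtain the required group and character directly from Theorem \ref{G}, applied with the prime $p=2$ and the field $F=\Q(\sqrt d)$; this will in fact produce a \emph{solvable} group. The only hypothesis of Theorem \ref{G} that needs verification is the arithmetic condition: writing $n$ for the conductor of $F$ and $n=2^{a}m$ with $m$ odd, one must check that $|\Q_n:\langle F,\Q_m\rangle|$ is odd. Once this is done, Theorem \ref{G} yields a solvable group $G$ and $\chi\in\irr G$ of $2$-height zero with $\Q(\chi)=F=\Q(\sqrt d)$, which is precisely the assertion.

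First I would recall the classical description of $n$. Since $d$ is odd and square-free, $\Q(\sqrt d)$ is a quadratic field whose conductor equals the absolute value of its discriminant; hence $n=|d|$ if $d\equiv 1\pmod 4$ and $n=4|d|$ if $d\equiv 3\pmod 4$. In the first case $a=0$ and $m=|d|=n$, so $\langle F,\Q_m\rangle=\Q_n$ and the index equals $1$, which is odd, and Theorem \ref{G} applies at once.

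It then remains to treat the case $d\equiv 3\pmod 4$, where $a=2$ and $m=|d|$. The key point, which I expect to be the only (and rather mild) obstacle, is to show $\sqrt{-d}\in\Q_m$, so that adjoining $F$ to $\Q_m$ lands inside $\Q_n$. This follows from the standard quadratic Gauss sum fact that $\sqrt{N^{*}}\in\Q_N$ for every odd square-free $N$, where $N^{*}=(-1)^{(N-1)/2}N$: taking $N=|d|$ and using $d\equiv 3\pmod 4$ one checks that $N^{*}=-d$ irrespective of the sign of $d$, whence $\sqrt{-d}\in\Q_{|d|}=\Q_m$. Consequently $\langle F,\Q_m\rangle$ contains both $\sqrt d$ and $\sqrt{-d}$, hence their product, a square root of $-d^{2}$, and therefore contains $\sqrt{-1}$. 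Since $m$ is odd this forces $\langle F,\Q_m\rangle\supseteq \Q_m(\sqrt{-1})=\Q_{4m}=\Q_n$, so again $|\Q_n:\langle F,\Q_m\rangle|=1$ is odd. In all cases the hypothesis of Theorem \ref{G} is satisfied, and the corollary follows. No deeper difficulty is anticipated: everything beyond invoking Theorem \ref{G} is elementary cyclotomic arithmetic.
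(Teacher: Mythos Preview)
Your proof is correct and follows essentially the same approach as the paper: both reduce the statement to an application of Theorem~\ref{G} after verifying the conductor condition on $F=\Q(\sqrt d)$. The only cosmetic difference is in the case $d\equiv 3\pmod 4$, where the paper observes that $|\Q_n:\Q_{|d|}|=2$ and that $\Q(\sqrt d)\not\sbs\Q_{|d|}$ (since the conductor is $4|d|$), whereas you compute directly via Gauss sums that $\sqrt{-d}\in\Q_{|d|}$; your argument also absorbs the case $d=-1$ uniformly, which the paper handles separately with $C_4$.
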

 
\begin{proof}~~By considering the cyclic group of order 4, we may assume that $d\ne \pm 1$
is an odd square-free integer.
Let $F=\Q(\sqrt d)$.
If $d\equiv 1$ mod 4, then $F\sbs \Q_{|d|}$, $|d|$ is the conductor of $F$, and we are done by Theorem \ref{G}.
Suppose that $d \equiv 3$ mod 4. Let $n=2^2|d|$. By Theorem \ref{G},
we only need to show that $\langle \Q_{|d|}, \Q(\sqrt d) \rangle =\Q_n$.
Since $|\Q_n:\Q_{|d|}|=2$, this can only fail if $\Q(\sqrt d) \sbs \Q_{|d|}$, which is not possible
because the conductor of $\Q(\sqrt d)$ is $n$. 
\end{proof}

 We finish this section by showing that Conjecture D follows from the Alperin-McKay-Navarro conjecture \cite[Conjecture B]{N1}.
 We recall that the Alperin-McKay-Navarro conjecture predicts, for a $p$-block $B$ of a finite group $G$ and its Brauer first main correspondent $b$, that there is a bijection 
 between the sets of height zero characters of $B$ and $b$ such that, if $\chi$ corresponds to $\chi^*$ then $\Q_n(\chi)=\Q_n(\chi^*)$
 where $n=|G|_{p'}$. We care to mention that the currently accepted and most studied form of \cite[Conjecture C]{NT} is more general, predicting the
 existence of an $\mathcal H$-equivariant bijection between height zero characters of $B$ and $b$, where the Galois group $\mathcal H$, which contains ${\rm Gal}(\Q_{|G|}/\Q_n)$, is defined in \cite[Section 2]{N1}. When we write {\em the Alperin-McKay-Navarro conjecture} we refer to the statement predicting $\mathcal H$-equivariant character bijections.

\begin{thm}\label{cons}
Conjecture D follows from the Alperin-McKay-Navarro conjecture.
\end{thm}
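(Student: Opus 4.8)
The plan is to reduce an arbitrary $p$-height zero character $\chi \in \irr B$ of a finite group $G$ to its Brauer first main correspondent, comparing conductors along the way. First I would fix the block $B$ of $G$ with a defect group $D$ and set $N = \norm G D$, letting $b$ be the Brauer correspondent of $B$ in $N$ (so $b^G = B$). Write $n = |G|_{p'}$; note $n_N = |N|_{p'}$ divides $n$. By the Alperin--McKay--Navarro conjecture, there is an $\mathcal{H}$-equivariant bijection $\Irr_0(B) \to \Irr_0(b)$, $\chi \mapsto \chi^*$, preserving $\Q_n$-values in the strong sense that $\sigma$ fixes $\chi$ iff it fixes $\chi^*$ for all $\sigma \in {\rm Gal}(\Q_{|G|}/\Q_n)$ (this is exactly the $\mathcal{H}$-equivariance, since $\mathcal{H} \supseteq {\rm Gal}(\Q_{|G|}/\Q_n)$). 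In particular $\Q_n(\chi) = \Q_n(\chi^*)$, and hence $\Q_{pn}(\chi) = \Q_{pn}(\chi^*)$ by adjoining $\zeta_p$; then Lemma \ref{triv}(ii) gives $c(\chi)_p = c(\chi^*)_p$ whenever $p = 2$, or $p$ odd and both $p$-parts are at least $p$. (The degenerate odd case where one side is $p$-rational forces the other to be $p$-rational too, arguing as in the proof of Theorem \ref{cliff}: a Galois automorphism fixing all $p'$-roots of unity fixes $\chi$ iff it fixes $\chi^*$, so $p$-rationality transfers.)

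Next I would invoke induction on $|G|$ together with the structural reductions already in place. Since $D \nor N$, the character $\chi^* \in \Irr_0(b)$ lies in a block of $N$ with normal defect group $D$, so Lemma \ref{normald} applies directly to $\chi^*$ inside $N$: writing $c(\chi^*) = p^{a}m'$ with $p \nmid m'$, we get $\Q_{p^{a}} \sbs \Q_{pn_N}(\chi^*) \sbs \Q_{pn}(\chi^*) = \Q_{pn}(\chi)$. Combining with $c(\chi)_p = c(\chi^*)_p = p^{a}$ (from the previous paragraph), we obtain $\Q_{c(\chi)_p} \sbs \Q_{pn}(\chi)$. Finally, applying Lemma \ref{useful} with $f = a$ transfers this to the sharper statement $\Q_{p^{a}} \sbs \Q_{pm}(\chi)$ where $c(\chi) = p^{a}m$ with $p \nmid m$: indeed $m \mid n$ and $\Q_{p^{a}} \sbs \Q_{pn}(\chi)$, so Lemma \ref{useful} yields $\Q_{p^{a}} \sbs \Q_{pm}(\chi)$, which is precisely the conclusion of Theorem A for $p = 2$ and of Conjecture D's arithmetic condition in general. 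The characterization statement of Conjecture D then follows by pairing this containment with Theorem \ref{G}, which already produces every field in $\mathcal{F}_p$.

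For the subtle points: I would make sure the AMN-bijection is used in the form predicting $\mathcal{H}$-equivariance (as flagged in the paragraph preceding the theorem statement), because it is precisely the compatibility with ${\rm Gal}(\Q_{|G|}/\Q_n)$ — not merely equality of $\Q_n$-fields of values — that one needs to pass $c(\cdot)_p$ across the bijection in the odd-prime degenerate case. One also needs that $\chi^*$ genuinely has $p$-height zero in its block of $N$ and that this block has $D$ as defect group; this is the content of the Brauer first main correspondence and is built into the statement of the AMN conjecture, so no extra work is required. The main obstacle I anticipate is bookkeeping the two different $p'$-orders $n = |G|_{p'}$ and $n_N = |N|_{p'}$ and making certain that Lemma \ref{normald}, which is stated with $n = $ (the ambient group's $p'$-order), is being applied inside $N$ with $n_N$ and then enlarged to $n$ — a monotonicity step that is harmless since $n_N \mid n$ and enlarging the cyclotomic base field only helps. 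Everything else is a direct chaining of Lemmas \ref{useful}, \ref{triv}, \ref{normald} and Theorem \ref{G}.
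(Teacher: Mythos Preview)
Your proposal is correct and follows essentially the same approach as the paper's proof: use the Alperin--McKay--Navarro bijection to pass from $\chi$ to a height-zero character $\chi^*$ in the Brauer correspondent block of $\norm GD$, match the $p$-parts of the conductors via $\Q_n(\chi)=\Q_n(\chi^*)$, apply Lemma~\ref{normald} in the normal-defect-group situation, and then pull the conclusion back using Lemma~\ref{useful} (together with Theorem~\ref{G} for the other containment). The paper's argument is simply a terser version of what you wrote; your extra bookkeeping with $n$ versus $n_N$ and the degenerate odd case is sound but not strictly necessary, since from $\Q_n(\chi)=\Q_n(\chi^*)$ with $n$ a $p'$-number one gets $c(\chi)_p=c(\chi^*)_p$ directly without adjoining $\zeta_p$ first or invoking $\mathcal{H}$-equivariance beyond the $\Q_n$-field condition.
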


\begin{proof}
Let $\chi \in \irr B$ be of $p$-height zero, where $B$ has defect group $D$. By the Alperin-McKay-Navarro conjecture, there is 
$\tau \in \irr{\norm GD}$ of heigh zero, in the Brauer correspondent block $b$ of $\norm GD$
such that $\Q_n(\chi)=\Q_n(\tau)$, where $n=|G|_{p'}$. In particular $c(\chi)_p=c(\tau)_p$
reasoning as in Lemma \ref{triv}.  
Hence, it is enough to prove that $Q_{c(\tau)_p}\sbs \Q_{pm}(\tau)$ with $m=|\norm G D|_{p'}$. 
We may then assume that $D \nor G$. 
In this case, we can apply Lemma \ref{normald}.  
\end{proof}

    \section{The Reduction}
    
    There is one more issue that we have to solve before proving a reduction to quasi-simple groups
    of Conjecture D. If $G/N$ is a $p'$-group, $\theta \in \irr N$ is $G$-invariant and $p$-rational,
    it is not necessarily true that the characters of $G$ over $\theta$ are $p$-rational (even if they extend
    $\theta$ and $p$ is odd),  as shown by $p=3$ and ${\tt SmallGroup}(24,4)$.
    We need the following.
    
    \begin{thm}\label{cp1}
    Suppose that $G/N$ is a simple group of order coprime to $p$. Let $\theta \in \irr N$ be $G$-invariant and $p$-rational.
    If $\chi \in \irr{G|\theta}$, then $c(\chi)_p \le p$.
    \end{thm}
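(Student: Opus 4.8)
The plan is to reduce to the case where $\chi$ extends $\theta$ and then analyze the extensions via the theory of character triples and projective representations, keeping careful track of fields of values. First I would use Theorem \ref{cliff}: since $G/N$ is simple, $\theta$ is $G$-invariant, so the Clifford correspondence is trivial and we must deal with $\irr{G|\theta}$ directly. The key structural input is that $G/N$ is \emph{simple of order coprime to $p$}. Via the canonically constructed character triple of \cite{GP} (as in Theorem \ref{ct}), I would pass to an isomorphic character triple $(\Gamma, A, \lambda)$ with $\Gamma$ perfect (here $G/N$ simple and coprime to $p$, so $\Gamma/A \cong G/N$ is perfect), $A = \zent\Gamma$, $\Q(\lambda) \sbs \Q(\theta)$, and $\Q(\chi) = \Q(\chi^*, \theta)$ for the corresponding $\chi^*$. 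Since $\theta$ is $p$-rational, $\Q(\chi)_p = \Q(\chi^*)_p$, so it suffices to bound $c(\psi)_p$ for $\psi \in \irr{\Gamma|\lambda}$ where $\Gamma$ is a finite perfect central extension of the simple group $S = G/N$ of order coprime to $p$, and $\lambda$ is a linear character of $A = \zent\Gamma$.

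Next I would reduce $\Gamma$: factoring out the part of $A$ in the kernel of $\lambda$, and using that $\Gamma/A \cong S$ is perfect, $\Gamma$ becomes a quotient of the Schur cover of $S$ by a subgroup of the center, so $A$ embeds in the Schur multiplier $\Mult(S)$. Now the crucial point is: $|S|$ is coprime to $p$, hence $|\Gamma|$ is coprime to $p$ unless $p \mid |A|$, i.e.\ unless $p \mid |\Mult(S)|$. If $p \nmid |\Gamma|$, then every $\psi \in \irr\Gamma$ is $p$-rational and $c(\chi)_p = 1 \le p$, done. So the remaining case is $p \mid |\Mult(S)|$ while $p \nmid |S|$; then the $p$-part of $A$ (and of $\Gamma$) is cyclic of order $p$ (since for a simple group $S$ with $p \nmid |S|$, the $p$-part of $\Mult(S)$ is at most $p$ — this uses CFSG and the known multipliers). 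In that case $\Q_{|\Gamma|}$ has $p$-part exactly $\Q_p$, so $c(\psi)_p \mid p$ for \emph{every} $\psi \in \irr\Gamma$, giving $c(\chi)_p \le p$.

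The main obstacle I anticipate is the arithmetic-of-Schur-multipliers step: one must know that whenever $S$ is a nonabelian finite simple group with $p \nmid |S|$, the $p$-part of the Schur multiplier $\Mult(S)$ divides $p$. This is where CFSG enters — for alternating groups $\Mult$ has order dividing $6$; for sporadic groups the multipliers are explicitly tabulated and have small order; for groups of Lie type in the defining characteristic $\ell \neq p$, the $p$-part of the multiplier is controlled, and the ``exceptional'' multipliers are a finite known list. One then checks case by case that no simple group has $p^2 \mid |\Mult(S)|$ with $p \nmid |S|$ (indeed $p^2$ never divides any Schur multiplier of a simple group). Granting this, the cyclotomic-field bound $\Q(\psi) \sbs \Q_{|\Gamma|}$ with $|\Gamma|_p \le p$ finishes the argument, and pulling back through the chain of character triple isomorphisms (and $p$-rationality of $\theta$) yields $c(\chi)_p \le p$ as claimed.
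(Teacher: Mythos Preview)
There is a genuine gap: you have not handled the case where $G/N$ is cyclic of prime order $q\ne p$. Your invocation of Theorem~\ref{ct} requires $G/N$ to be perfect, and your parenthetical ``$G/N$ simple $\ldots$ so $\Gamma/A\cong G/N$ is perfect'' overlooks that cyclic groups of prime order are simple but not perfect. The paper treats this case separately, citing Theorem~B of \cite{V}.

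For the nonabelian simple case your argument is correct in outline but takes an unnecessary detour. You assert that bounding the $p$-part of $\Mult(S)$ by $p$ ``uses CFSG and the known multipliers'', but no classification is needed: for any finite group $H$ the exponent of $H^2(H,\C^\times)$ divides $|H|$, so $p\nmid |S|$ already forces $p\nmid |\Mult(S)|$. Thus your ``remaining case'' is vacuous, $|\Gamma|$ is automatically a $p'$-number, and $c(\chi^*)_p=1$ follows at once. The paper reaches the same conclusion by a more direct route that avoids Schur multipliers entirely: since $G^*/N^*$ is a $p'$-group one has $x_p\in N^*=\zent{G^*}$ for every $x\in G^*$, and every linear $\delta\in\irr{N^*\langle x\rangle\mid\theta^*}$ satisfies $\delta(x)=\theta^*(x_p)\delta(x_{p'})\in\Q_{|G^*|_{p'}}$ because $\theta^*$ is $p$-rational; hence $\chi^*(x)$ is $p$-rational for every $x$.
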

    
    \begin{proof}
    Suppose first that $G/N$ has order $q$. Then this follows from Theorem B of \cite{V}.
      Suppose now that $G/N$ is perfect.  By Theorem \ref{ct}, there is an isomorphic character triple 
    $(G^*,N^*, \theta^*)$ such that $N^*=\zent{G^*}$, $\Q(\theta^*) \sbs \Q(\theta)$ and $\Q(\chi)=\Q(\chi^*,\theta^*)$
    whenever $\chi \in \irr{G|\theta}$ and $\chi^*$ corresponds to $\chi$ under the character triple isomorphism. In particular, we have that $G^*/N^*$ is a simple non abelian $p'$-group, and $\theta^*$ is $p$-rational. 
    Let $x \in G^*$, and let $\delta \in \irr{N^*\langle x\rangle}$ be over $\theta^*$. Since $N^*\langle x \rangle /N^*$ is cyclic and $\theta^*$ is $G^*$-invariant,  $\delta_{N^*}=\theta^*$.
    Since $x_p \in N^*$ and $\delta$ is linear, we have that $\delta(x)=\delta(x_p)\delta(x_{p'})=\theta^*(x_p)\delta(x_{p'})
     \in \Q_{|G^*|_{p'}}$. Then every $\delta \in \irr{N^*\langle x \rangle | \theta^*}$ is $p$-rational. Since $\chi_{N^*\langle x\rangle}$ is a sum of irreducible characters lying over $\theta^*$ and the choice of $x \in G^*$ was arbitrary, we are done. 
    \end{proof}

    \begin{conj}\label{conjqs}
    Let $\chi \in \irr G$ of $p$-height zero, where $G$
    is a quasi-simple group.
   Assume in addition that the $p$-block $B$ containing $\chi$ is not (virtual) Morita equivalent over an absolutely unramified complete discrete valuation ring to a $p$-block of any group
   $H$ with $|H:\zent H|<|G:\zent G|$.
   Write $c(\chi)=p^a m$. Then $\Q_{p^a} \sbs \Q_{pm}(\chi)$.
   \end{conj}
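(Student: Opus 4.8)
We sketch the strategy we expect to be needed for Conjecture \ref{conjqs}. Two routine reductions come first: the case $c(\chi)_p \le p$ (which for $p=2$ is exactly the case where $\chi$ is $2$-rational) is trivial, so we may assume $a \ge 1$, and in fact $a \ge 2$ when $p=2$. Since $G$ is quasi-simple, $S = G/\zent G$ is a non-abelian simple group and we argue according to the Classification of Finite Simple Groups. When $S$ is an alternating group or a sporadic group (or the Tits group), only finitely many quasi-simple $G$ have $G/\zent G \cong S$, and the inclusion $\Q_{p^a} \sbs \Q_{pm}(\chi)$ is a finite verification from the ordinary character tables and the known block distributions; for the covers of $\mathsf{A}_n$ one uses that the values of the basic spin characters are built from square roots of divisors of $n$, so the $p$-part of the conductor is small and the inclusion is immediate.

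Next suppose $G = \mathbf{G}^F$ is of Lie type over $\mathbb{F}_q$ with $q$ a power of the defining prime $p$. Here the block structure is essentially trivial: the principal block has maximal defect and every other block has defect zero. One checks that the height-zero characters are either $p$-rational or, in the classical groups over an odd field, are halves of Weil characters, whose field of values is contained in $\Q(\sqrt{p^{*}}, i) \sbs \Q_{4p}$; in all cases $c(\chi)_p \le p$, so there is nothing to prove.

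The core of the problem is the case where $G$ is of Lie type in non-defining characteristic. Here the plan is to use the hypothesis that $B$ is not Morita equivalent, over an absolutely unramified complete discrete valuation ring, to a $p$-block of a group with smaller $|H:\zent H|$, together with the Morita equivalences of \cite{BDR} in the refined form of \cite{KL}. These equivalences, being defined over such a ring, transport decomposition matrices and ---crucially--- the Galois structure of the set of irreducible characters; combined with the Galois-equivariance of Jordan decomposition of characters they reduce the analysis to $p$-blocks associated to a \emph{quasi-isolated} semisimple $p'$-element $s$ of the dual group $\mathbf{G}^{*F^{*}}$. Since there are, by Bonnaf\'e's classification, only finitely many quasi-isolated classes per isogeny type, one is left with unipotent blocks ($s=1$) and a short list of quasi-isolated non-unipotent blocks. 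For these, $d$-Harish-Chandra theory (Brou\'e--Malle--Michel, Cabanes--Enguehard) describes the height-zero characters, and Deligne--Lusztig theory together with the known fields of values of unipotent characters (Lusztig, Geck, L\"ubeck) lets one compute $\Q(\chi)$ and check the inclusion case by case.

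The main obstacle, we expect, is twofold. First, one must make precise that ``minimal among quasi-simple $p$-blocks up to Morita equivalence over an absolutely unramified ring'' really controls the field of values of $\chi$ and not only its degree and decomposition data; the absolutely unramified hypothesis is designed exactly for this, but the compatibility with both the Morita equivalences and Jordan decomposition of characters is delicate. Second, the exotic unipotent character values ---those involving $\sqrt{\pm 2}$, $\sqrt{\pm 3}$, $\sqrt{5}$ or fourth roots of unity, which occur in types $G_2$, $F_4$, $E_7$, $E_8$ and in the Suzuki and Ree groups--- are precisely where the statement is tightest at the small primes $p \in \{2,3,5\}$, so one must check either that such a character is not of $p$-height zero in a minimal block, or that the $p'$-part of its field of values (coming from the relevant $\Phi_d(q)$) already absorbs $\Q_{p^a}$. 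For $p=2$ this programme can be carried out in full, which gives Theorem A for quasi-simple groups.
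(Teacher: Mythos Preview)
The statement is a conjecture; the paper proves only the case $p=2$ (Theorem~\ref{qs}, established via Theorems~\ref{simple2}--\ref{exc}), and your closing sentence acknowledges this. So I compare your $p=2$ outline with the paper's argument and then flag one issue in your odd-$p$ remarks.

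For $p=2$ your strategy is essentially the paper's, with two differences of detail. First, for the double covers $2\AAA_n$ the paper does \emph{not} argue via explicit spin-character values as you sketch; instead (Theorem~\ref{alt}) it uses Denoncin's block-weight results \cite{De} to show that any height-zero spin character lies in a block with defect group $\ZB(G)$, and then \cite{N0} forces such a character to be $2$-rational. Second, in both the defining-characteristic case (Theorem~\ref{simple2}) and the classical non-defining case (Theorem~\ref{clas}) the paper reduces to characters of \emph{odd degree} and invokes \cite[Theorem~A1]{NT}; you do not mention this step, but it is what actually closes those cases. Your Bonnaf\'e--Dat--Rouquier/\cite{FK}/\cite{KL} reduction to quasi-isolated $s$ in non-defining characteristic is exactly the paper's, and your ``short finite list'' is accurate: for classical types the odd-order condition on $s$ forces $s=1$ (or, in type $A$, $s$ non-isolated and hence again covered by \cite{BDR}), so the only genuine case analysis is for exceptional types (Theorem~\ref{exc}), where the paper uses \cite{MalleHZ}, \cite{KessarMalle}, \cite{Expectional} and Galois-equivariance of Jordan decomposition rather than $d$-Harish--Chandra theory as such.

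Your odd-$p$ defining-characteristic paragraph is not right as written. The height-zero characters in a block of maximal defect are exactly the $p'$-degree characters, and these are certainly not all ``$p$-rational or halves of Weil characters'': already for $\SL_n(q)$ or for exceptional types there are many $p'$-degree characters of neither kind. What one actually needs there is that every $p'$-degree character $\chi$ of a Lie-type group in characteristic $p$ satisfies $\Q_{p^a}\sbs\Q_{pm}(\chi)$, which is the specialisation of \cite[Conjecture~C]{NT} to such groups---open in general, and not the routine check you describe.
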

      
    Notice that if the $p$-block $B$ of $\chi$ is virtual Morita
    equivalent over an absolutely unramified complete discrete valuation ring to a $p$-block
    $\tilde B$, by Theorem 1.6 of \cite{KL},  there is a height zero character $\tilde \chi$ in $\tilde B$ and an integer $l$ not divisible by $p$
    such that $\Q_l(\chi)=\Q_l(\tilde\chi)$. By Lemma \ref{triv}, we have that $c(\chi)_p=c(\tilde \chi)_p=p^a$,
    and therefore $\Q_{p^a} \sbs \Q_{pl}(\chi)$ if and only if $\Q_{p^a} \sbs \Q_{pl}(\tilde\chi)$.
    Hence, if $c(\chi)_{p'}=m$ and $c(\tilde\chi)_{p'}=m_1$, then
    $\Q_{p^a} \sbs \Q_{pm}(\chi)$ if, and only if, $\Q_{p^a} \sbs \Q_{pm_1}(\tilde\chi)$, by Lemma \ref{useful}.
    We will use this argument below.
    
    \begin{thm}\label{reduction}
    Let $G$ be a finite group, and let $p$ be a prime.
     Let $\chi \in \irr G$ be of $p$-height zero, and write $c(\chi)=p^am$, where $a\ge 0$ and $p$ does not divide $m$.      If Conjecture \ref{conjqs} is true, then $\Q_{p^a} \sbs \Q_{pm}(\chi)$.
       \end{thm}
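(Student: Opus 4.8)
The plan is to argue by induction on $|G|$, the goal being to funnel everything into the quasi-simple case covered by Conjecture \ref{conjqs}. Since $\Q_p\sbs\Q_{pm}$ always, the statement is trivial when $a\le 1$, so I assume $a\ge 2$; then $c(\chi)_p\ge p$, so the height-zero hypotheses of Corollary \ref{cor}, Theorem \ref{cliff} and Lemma \ref{triv}(ii) are all available. Put $n=|G|_{p'}$, fix a maximal normal subgroup $N\nor G$, and let $\theta$ be an irreducible constituent of $\chi_N$ (of $p$-height zero by Theorem \ref{ht0}(ii)). If $\theta$ is not $G$-invariant, I pass to $T=G_\theta<G$ and the Clifford correspondent $\psi\in\irr{T|\theta}$: by Theorem \ref{cliff}, $\psi$ has $p$-height zero, $c(\psi)_p=c(\chi)_p=p^a$ and $\Q_{pn}(\psi)=\Q_{pn}(\chi)$, so the inductive hypothesis in $T$ gives $\Q_{p^a}\sbs\Q_{pc(\psi)_{p'}}(\psi)\sbs\Q_{pn}(\chi)$, and Lemma \ref{useful} yields $\Q_{p^a}\sbs\Q_{pm}(\chi)$. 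Hence I may assume $\theta$ is $G$-invariant, so $\chi_N=e\theta$ (whence $\Q(\theta)\sbs\Q(\chi)$) and $G/N$ is simple.

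If $G/N$ is non-abelian simple, hence perfect, I apply Theorem \ref{ct} to get a character triple isomorphism $(G,N,\theta)\to(\Gamma,A,\lambda)$ with $\Gamma$ perfect, $A=\zent\Gamma$, $\Q(\chi)=\Q(\chi^*,\theta)$, and $\chi^*\in\irr\Gamma$ of $p$-height zero; since $\Gamma/\zent\Gamma=G/N$ is simple, $\Gamma$ is quasi-simple. Conjecture \ref{conjqs}, together with the virtual-Morita-equivalence reduction recorded before this statement (which upgrades it to Theorem A for all quasi-simple groups), gives $\Q_{p^{a^*}}\sbs\Q_{pm^*}(\chi^*)$ with $c(\chi^*)=p^{a^*}m^*$. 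From $\Q(\chi)=\Q(\chi^*,\theta)$ one reads off $c(\chi)_p=\max(p^{a^*},c(\theta)_p)=p^a$ and $m^*\mid m$. If $p^a=p^{a^*}$ then $\Q_{p^a}\sbs\Q_{pm^*}(\chi^*)\sbs\Q_{pm^*}(\chi)\sbs\Q_{pm}(\chi)$ using $\Q(\chi^*)\sbs\Q(\chi)$. If instead $p^a=c(\theta)_p>p^{a^*}$, the inductive hypothesis applied to $\theta$ in $N$ gives $\Q_{p^a}\sbs\Q_{pc(\theta)_{p'}}(\theta)\sbs\Q_{pn}(\theta)\sbs\Q_{pn}(\chi)$, and Lemma \ref{useful} finishes.

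Next, if $G/N\cong C_q$ for a prime $q\ne p$, then $\theta$ extends to $G$ and every member of $\irr{G|\theta}$ is such an extension. Here the point is that $c(\chi)_p=c(\theta)_p$: indeed $c(\theta)_p\le c(\chi)_p$ by Corollary \ref{cor}, while the $p$-group $\Gal(\Q_{|G|}/\Q_{c(\theta)_pn})$ fixes $\theta$ and permutes its $q$ extensions, hence (as $p\nmid q$) fixes one of them, say $\chi_0$, forcing $c(\chi_0)_p=c(\theta)_p$; writing $\chi=\beta\chi_0$ with $\beta\in\irr{G/N}$ of order prime to $p$ gives $c(\chi)_p\le c(\chi_0)_p=c(\theta)_p$. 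Now the inductive hypothesis applied to $\theta$ in $N$ gives $\Q_{p^a}=\Q_{c(\theta)_p}\sbs\Q_{pc(\theta)_{p'}}(\theta)\sbs\Q_{pn}(\theta)\sbs\Q_{pn}(\chi)$, and Lemma \ref{useful} concludes.

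The remaining case $G/N\cong C_p$ is, I expect, the main obstacle, and is exactly what Lemmas \ref{qp}--\ref{extnormald} are built for. Again $\chi_N=\theta$. Let $D_0$ be a defect group of the block of $\theta$ and $H=\norm G{D_0}$; a defect group $D$ of the block of $\chi$ satisfies $G=ND$ and $D\sbs H$, so $G=NH$; put $M=N\cap H$. If $H=G$ then $D_0\nor G$ and Lemma \ref{extnormald} gives $\Q_{p^a}\sbs\Q_{pn}(\chi)$, hence $\Q_{p^a}\sbs\Q_{pm}(\chi)$ by Lemma \ref{useful}. If $H<G$, Lemma \ref{index2lem} produces an $H$-invariant $\varphi\in\irr M$ of $p$-height zero with $[\theta_M,\varphi]\not\equiv 0\bmod p$ and $\Q_{pn}(\varphi)\sbs\Q_{pn}(\theta)$; pick $\xi\in\irr{H|\varphi}$, which extends $\varphi$ and has $p$-height zero by Lemma \ref{elem}. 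By Lemma \ref{mult}(c), $\Q_p(\chi,\varphi)=\Q_p(\theta,\xi)$, hence $\Q_{pn}(\chi,\varphi)=\Q_{pn}(\theta,\xi)$; since $\Q(\varphi)\sbs\Q_{pn}(\varphi)\sbs\Q_{pn}(\theta)\sbs\Q_{pn}(\chi)$, the left side is $\Q_{pn}(\chi)$, so $\Q_{pn}(\chi)=\Q_{pn}(\theta,\xi)$. Comparing $p$-parts of the conductors of the two sides (using $c(\theta)_{p'},c(\xi)_{p'}\mid n$ and $a\ge 2$) gives $c(\chi)_p=\max(c(\theta)_p,c(\xi)_p)=p^a$. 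If $p^a=c(\xi)_p$, the inductive hypothesis applied to $\xi$ in $H$ gives $\Q_{p^a}\sbs\Q_{pc(\xi)_{p'}}(\xi)\sbs\Q_{pn}(\xi)\sbs\Q_{pn}(\theta,\xi)=\Q_{pn}(\chi)$; if $p^a=c(\theta)_p$, the inductive hypothesis applied to $\theta$ in $N$ gives $\Q_{p^a}\sbs\Q_{pn}(\theta)\sbs\Q_{pn}(\chi)$. In either case Lemma \ref{useful} closes the induction. The hard part is therefore twofold: extracting inside $\norm G{D_0}$ a Galois-compatible height-zero character $\varphi$ of the Brauer correspondent block occurring with odd multiplicity in $\theta_M$ (Lemma \ref{index2lem}), and then carrying fields of values across the index-$p$ extension via Lemma \ref{mult}; the base case $H=G$ in turn rests on the normal-defect analysis of Lemmas \ref{qp}, \ref{normald} and \ref{extnormald}.
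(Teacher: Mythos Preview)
Your overall architecture matches the paper's proof closely: Clifford reduction to invariant $\theta$, then split on whether $G/N$ is cyclic of order $p$, cyclic of $p'$-order, or non-abelian simple, invoking Lemmas \ref{index2lem}, \ref{mult}, \ref{extnormald} for the index-$p$ case and Theorem \ref{ct} plus Conjecture \ref{conjqs} for the perfect case. That part is fine.

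The genuine gap is in your handling of the Morita step, and it is tied to your choice of inductive variable. You induct on $|G|$ and then assert that the paragraph before the theorem ``upgrades Conjecture \ref{conjqs} to Theorem A for all quasi-simple groups.'' It does not. That paragraph only says the desired containment for $\chi^*\in\Irr(\Gamma)$ is \emph{equivalent} to the same containment for some $\tilde\chi$ in a block of a group $H$ with $|H:\zent H|<|\Gamma:\zent\Gamma|$; it does not establish the containment for $\tilde\chi$. Since $H$ need not be quasi-simple, you cannot feed $\tilde\chi$ back into Conjecture \ref{conjqs}, and since nothing bounds $|H|$ in terms of $|G|$, you cannot feed it into your induction either. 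The paper avoids this by inducting on $|G:\zent G|$: then $|H:\zent H|<|\Gamma:\zent\Gamma|=|G/N|\le|G:\zent G|$, so the full inductive hypothesis of the theorem applies to $\tilde\chi$ in $H$, closing the loop. With induction on $|G:\zent G|$, the Clifford step still decreases (since $\zent G\le\zent T$ gives $|T:\zent T|\le|T:\zent G|<|G:\zent G|$), and in the Final Step $\zent G\le N$ forces $|N:\zent N|<|G:\zent G|$ as well. You should switch to this inductive variable and, in the perfect case, invoke induction on $H$ explicitly rather than claiming Conjecture \ref{conjqs} has been upgraded.

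A smaller issue: in the $C_q$ case you claim $\Gal(\Q_{|G|}/\Q_{c(\theta)_pn})$ is a $p$-group. For $p$ odd this fails when $\theta$ is $p$-rational (the group then has order divisible by $p-1$). The paper handles the $p'$-index case via Theorem \ref{cp1} and \cite[Lemma 4.2(ii)]{NT}: since $a\ge2$, Theorem \ref{cp1} forces $c(\theta)_p\ge p$, after which $c(\chi)_p=c(\theta)_p$. Your orbit argument is a legitimate alternative once you insert this observation (or restrict to the subgroup $\Gal(\Q_{|G|}/\Q_{pn})$, which \emph{is} a $p$-group).
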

    
\begin{proof}
We argue by induction on $|G:\zent G|$. We may assume that $a\ge 2$,
because otherwise the statement is trivially satisfied. 
 Let $n=|G|_{p^\prime}$. By Lemma \ref{useful}, it is enough to show that 
$\Q_{p^a} \sbs \Q_{pn}(\chi)$.

\smallskip
\noindent
{\sl Step 1. If $N\nor G$ is a proper normal subgroup, then we may assume that $\chi_N=e\theta$ 
for some $\theta \in \irr N$, with $c(\theta)_p<c(\chi)_p$.}
\smallskip

Let $T$ be the stabilizer of $\theta$ in $G$, let $\psi \in \irr{T|\theta}$ be the Clifford correspondent of $\chi$ over $\theta$. 
By Theorem \ref{cliff}, we have that $\Q_{pn}(\psi)=\Q_{pn}(\chi)$ and $c(\chi)_p=p^a=c(\psi)_p$. 
Assume that $T<G$. Then $|T:\zent T|<|G:\zent G|$, and by induction, if $m_1$ is the $p'$-part of the conductor of $\psi$, we have that $\Q_{p^a} \sbs \Q_{pm_1}(\psi) \sbs \Q_{pn}(\psi)=\Q_{pn}(\chi)$.
By Lemma \ref{useful}, we deduce that $\Q_{p^a} \sbs \Q_{pm}(\chi)$. Hence,
we may assume that  $\chi_N=e\theta$.  In particular, $\Q(\theta) \sbs \Q(\chi)$, and $c(\theta)$ divides $c(\chi)$.
If $c(\theta)_p=c(\chi)_p$, and $m_2$ is the
$p'$-part of the conductor of $\theta$, then by induction $\Q_{p^a} \sbs \Q_{pm_2}(\theta) \sbs \Q_{pm_2}(\chi)$.
Again by Lemma \ref{useful}, we deduce what we want.

\smallskip
\noindent
{\sl Step 2. $G$ does not have a normal subgroup of index $p$.}
\smallskip

Otherwise, let $N$ be a normal subgroup of $G$ with $G/N$ of order $p$.
By Step 1, $\chi_N$ is homogeneous. Since $G/N$ is cyclic we can write $\chi_N=\theta \in \irr N$.

    Suppose that $G=NH$, $M=N\cap H$ and there exists  some $H$-invariant $\varphi \in \irr{M}$
     of $p$-height zero such that
    $[\theta_{M}, \varphi]$ is not divisible by $p$ and $\Q_{pn}(\varphi) \sbs \Q_{pn}(\theta)$.  
    Let $\xi \in \irr H$ be an extension of $\varphi$. By Lemma \ref{elem}, we have  that $\xi$ has $p$-height zero. 
        By Lemma \ref{mult}(c), we have that  $\Q_p(\xi, \theta)=\Q_p(\chi,\varphi)$.  
        Notice that
    $\Q_{pn}(\chi) \sbs \Q_{pn}(\chi,\varphi) \sbs \Q_{pn}(\chi,\theta)=\Q_{pn}(\chi)$.
    Therefore $\Q_{pn}(\chi)= \Q_{pn}(\chi,\varphi)=\Q_{pn}(\theta, \xi)$. Also, $\Q_{pn}(\xi) \sbs \Q_{pn}(\chi)$.    
    Write $c(\theta)_p=p^b$ and $c(\xi)_p=p^c$. 
    We have that $\Q(\xi) \sbs \Q_{pn}(\xi) \sbs \Q_{pn}(\chi) \sbs \Q_{p^an}$. Therefore $c \le a$.
    Notice that if $\theta$ and $\xi$ are $p$-rational, then 
    $\Q(\chi) \sbs \Q_{pn}(\chi)= \Q_{pn}(\chi,\varphi)=\Q_{pn}(\theta, \xi)=\Q_{pn}$, but we are assuming that
    $a\ge 2$.  Hence $b, c \ge 1$.  Now $\Q_{pn}(\theta) \sbs \langle \Q_{p^bn}, \Q_p\rangle$,
    and $\Q_{pn}(\xi) \sbs \langle \Q_{p^cn}, \Q_p\rangle$.
      Thus $$\Q(\chi) \sbs \Q_{pn}(\chi)= \langle \Q_{pn}(\theta), \Q_{pn}(\xi)\rangle \sbs
      \langle \Q_{p^bn},\Q_{p^cn},  \Q_p\rangle \sbs \Q_{p^d n}  \, ,$$
      where $d={\rm max}(b,c)$. Hence $p^a \le p^d$.  Since $b< a$ by Step 1 and $c\le a$,
      we conclude that $d=c=a$.
    If $|H:\zent H|<|G:\zent G|$ then, by induction, we have that  $\Q_{p^a} \sbs \Q_{pn}(\xi) \sbs \Q_{pn}(\chi)$, and we are done (using Lemma \ref{useful}). 
    
    By Lemma \ref{index2lem}, we may assume that the defect group $D_0$ of the block of $\theta$
    is normal in $G$. By Lemma \ref{extnormald}, we are done in this case.

\smallskip
\noindent
{\sl Step 3. $G$ does not have a proper normal subgroup of index not divisible by $p$.}
\smallskip

Otherwise, let $N\nor G$ such that $G/N$ is simple of order not divisible by $p$. By Step 1, $\chi_N=e\theta$. Recall that 
$c(\chi)_p\ge p^2$.  Hence, by Theorem \ref{cp1},  we have that $c(\theta)_p\ge p$, in this case.
 By  
Lemma 4.2.(ii) of \cite{NT}, we conclude that $c(\chi)_p=c(\theta)_p$,
contradicting Step 1.

%
  
\smallskip
\noindent
{\sl Final Step.}
Let $N$ be a maximal normal subgroup of $G$. By Steps 2 and 3, $G/N$ is simple non-abelian of order divisible by $p$. 
By Step 1, write
$\chi_N=e\theta$, where $c(\theta)_p=p^b$ and $b<a$. 
By Theorem \ref{ct},  there is a quasi-simple group $G^*$
and a $p$-height zero character $\chi^*$ of $G^*$ such that $\Q(\chi)=\Q(\chi^*,\theta)$. Write $c(\chi^*)=p^c k$,
where $k$ is not divisible by $p$.
Then the conductor of the field $\Q(\chi^*,\theta)=\langle \Q(\chi^*), \Q(\theta)\rangle$ is the least common multiple of the conductors of $\chi^*$
and $\theta$.
In particular, its $p$-part is $p^{{\rm max}(c,b)}$. As $c(\chi)=c(\Q(\chi^*,\theta))$, we have that
$a={\rm max}(c,b)$. Since $b<a$, we have that $a=c$.
Thus $c(\chi)_p=c(\chi^*)_p$.  If the $p$-block $B^*$ of $\chi^*$ is virtual Morita
    equivalent over an absolutely unramified complete discrete valuation ring to a $p$-block
    $\tilde B$ of a group $H$ with $|H:\zent H|<|G:\zent G|$, then $c(\chi^*)_p=c(\tilde \chi)_p$
    for some $\tilde \chi \in \tilde B$. As explained before the statement of this theorem,
    we would be done in this case, using induction. Therefore, by Conjecture \ref{conjqs},
we have that  $\Q_{p^a} \sbs \Q_{pk}(\chi^*) \sbs \Q_{pk}(\chi)$.   By using Lemma \ref{useful},
the proof of the theorem follows. \end{proof}

     \section{Theorem A for quasisimple groups}
     The aim of this section is to prove Conjecture \ref{conjqs} in the case that $p=2$, see
     Theorem \ref{qs} below.
     In the light of Theorem \ref{reduction} (and Theorem \ref{G}), this will complete the proof of Theorem A (and Theorem B). 
     
     \medskip
       The following is useful when working on extensions
     of quasi-simple groups.
     
     \begin{thm}\label{normal subgroup}
   Suppose that $G/N$ is abelian. Write $n=|G|_{2'}$. Suppose that $\chi \in \irr G$ has 2-height zero and suppose that
    $\Q_{2^a} \sbs \Q_n(\chi)$, where  $c(\chi)_2=2^a$.
     Let $\theta \in \irr N$ be an irreducible constituent of $\chi_N$, and write $c(\theta)_2=2^b$.
     Then $\Q_{2^b} \sbs \Q_n(\theta)$.
        \end{thm}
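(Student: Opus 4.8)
The plan is to argue by induction, with $(|G|, |G:N|)$ ordered lexicographically as inductive parameter. We may assume $b \ge 2$, as otherwise $\Q_{2^b} = \Q \sbs \Q_n(\theta)$; Corollary \ref{cor} then gives $a \ge b \ge 2$. Since $c(\chi)_{2'}$ divides $n$, the hypothesis $\Q_{2^a} \sbs \Q_n(\chi)$ is equivalent to $\Q_n(\chi) = \Q_{2^a n}$, and the conclusion $\Q_{2^b} \sbs \Q_n(\theta)$ to $\Q_n(\theta) = \Q_{2^b n}$; I will use both forms. The first reduction is to the case where $\theta$ is $G$-invariant and $G/N$ has prime order $q$. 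If $\theta$ is not $G$-invariant, let $\psi \in \irr{T|\theta}$ be the Clifford correspondent of $\chi$ over $\theta$, where $T < G$ is the inertia group of $\theta$: by Theorem \ref{ht0}, $\psi$ has $2$-height zero; by Theorem \ref{cliff}, $c(\psi)_2 = 2^a$ and $\Q_{2n}(\psi) = \Q_{2n}(\chi)$, so (using $\Q_{2k} = \Q_k$ for odd $k$ and Lemma \ref{useful}) the hypothesis holds for $\psi$ in $T$; since $T/N$ is abelian and $|T| < |G|$, induction gives the conclusion. So assume $\theta$ is $G$-invariant, hence $\chi_N = e\theta$. If $G/N$ is not of prime order, choose $N < M < G$ with $M \nor G$ and a constituent $\phi$ of $\chi_M$; by Theorem \ref{ht0} $\phi$ has $2$-height zero, and applying the theorem first to $(G, M, \chi, \phi)$ (same $|G|$, smaller $|G:M|$) and then, after descending to $\Q_{|M|_{2'}}(\phi)$ via Lemma \ref{useful}, to $(M, N, \phi, \theta)$ (smaller $|M|$) gives the conclusion. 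Thus we may assume $G/N$ is cyclic of prime order $q$ and $\chi_N = \theta$.

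Suppose $q$ is odd, so that $q \mid n$. For $\sigma \in \Gal(\Q_{|G|}/\Q_n(\theta))$ the character $\chi^\sigma$ again extends $\theta$, so $\chi^\sigma = \lambda_\sigma \chi$ for a unique $\lambda_\sigma \in \irr{G/N}$; as $\sigma$ fixes $\Q_q \sbs \Q_n(\theta)$, the assignment $\sigma \mapsto \lambda_\sigma$ is a homomorphism into the cyclic group $\irr{G/N}$ with kernel $\Gal(\Q_{|G|}/\Q_n(\chi))$. Hence $[\Q_n(\chi):\Q_n(\theta)]$ divides $q$. But $\Q_n(\chi) = \Q_{2^a n}$ forces $[\Q_n(\chi):\Q_n] = 2^{a-1}$, a power of $2$, so $[\Q_n(\chi):\Q_n(\theta)] = 1$ and $\Q_n(\theta) = \Q_{2^a n} \supseteq \Q_{2^b}$ (indeed $a = b$), as wanted.

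The case $q = 2$ is the heart of the proof. Let $D$ be a defect group of the block of $\chi$; by Lemma \ref{elem}, $G = DN$ and $D_0 := D \cap N$ is a defect group of the block of $\theta$. If $D_0 \nor G$, then after a Clifford reduction of $\theta$ with respect to $\cent{N}{D_0}D_0 \nor G$ (done as in the proof of Lemma \ref{normald}; by Theorem \ref{cliff} this changes neither $c(\cdot)_2$ nor $\Q_{|N|_{2'}}(\cdot)$ and keeps the defect group $D_0$) Lemma \ref{qp} applies and shows that $\theta_{D_0}$ is homogeneous with linear constituent $\lambda_0$ of order $2^b$, so $\Q(\lambda_0) = \Q_{2^b}$, and that $\Q(\lambda_0) \sbs \Q(\theta)$; hence $\Q_{2^b} \sbs \Q(\theta) \sbs \Q_n(\theta)$. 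If $D_0$ is not normal in $G$, set $H = \norm{G}{D_0} < G$ and $M = H \cap N$; then $G = NH$, $|H:M| = 2$ and $D_0 \nor H$. Lemma \ref{index2lem} yields an $H$-invariant $\varphi \in \irr M$ of $2$-height zero, lying in the block of $M$ with defect group $D_0$, with $[\theta_M, \varphi]$ odd and $\Q_n(\varphi) \sbs \Q_n(\theta)$. Picking an extension $\xi \in \irr{H|\varphi}$, Lemma \ref{elem} gives that $\xi$ has $2$-height zero, and, since the defect group $D_0$ of the block of $\xi_M = \varphi$ is normal in $H$, Lemma \ref{extnormald} gives $\Q_{c(\xi)_2} \sbs \Q_{|H|_{2'}}(\xi)$. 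Induction now applies to $(H, M, \xi, \varphi)$ (smaller $|H|$) and gives $\Q_{c(\varphi)_2} \sbs \Q_{|H|_{2'}}(\varphi) \sbs \Q_n(\theta)$; and running the normal-defect analysis of the previous sentence inside $M$ shows $c(\varphi)_2 = c(\lambda_0) = c(\theta)_2$, because $\varphi$ is a constituent of $\theta_M$ and so $\varphi_{D_0}$, $\theta_{D_0}$ have the same linear constituent $\lambda_0$ (here Lemma \ref{mult}(c), together with $\Q(\chi,\varphi) = \Q(\theta,\xi)$, keeps the fields of $\chi, \theta, \xi, \varphi$ aligned). In either case $\Q_{2^b} \sbs \Q_n(\theta)$.

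The genuinely delicate point — the main obstacle — lies entirely in this $q = 2$ case: one must orchestrate the Clifford reductions that bring $\theta$ (inside $N$) or $\varphi$ (inside $M$) into the range of Lemma \ref{qp}, while transporting the hypothesis $\Q_n(\chi) = \Q_{2^a n}$ through Lemmas \ref{index2lem}, \ref{mult} and \ref{extnormald} and guaranteeing $c(\varphi)_2 = c(\theta)_2$. Equivalently, the crux is to exclude a configuration with $[\Q_n(\chi):\Q_n(\theta)] = 2$ and $c(\theta)_2 = c(\chi)_2$, in which the desired containment would fail; it is the $2$-height zero hypothesis, through the $2$-rationality of the canonical characters appearing in Lemmas \ref{qp} and \ref{elquefaltaba}, that rules this out.
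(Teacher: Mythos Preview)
Your reductions to $\theta$ being $G$-invariant and $|G:N|$ prime are correct, and your treatment of the odd-prime case via the crossed homomorphism $\sigma \mapsto \lambda_\sigma$ into $\irr{G/N}$ is a clean alternative to the paper's citation of \cite[Lemma~4.2(ii)]{NT}. In the $q=2$ case, your subcase $D_0 \nor G$ is also fine and is essentially just Lemma~\ref{normald} applied to $\theta$ inside $N$ (your detour through Lemma~\ref{qp} is unnecessary).

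The genuine gap is in the subcase $D_0 \not\nor G$: your conclusion rests entirely on the equality $c(\varphi)_2 = c(\theta)_2$, and your justification for it does not work. You argue that ``$\varphi_{D_0}$, $\theta_{D_0}$ have the same linear constituent $\lambda_0$'' and that the normal-defect analysis then gives $c(\theta)_2 = c(\lambda_0) = c(\varphi)_2$. But Lemma~\ref{qp} can only be applied to $\varphi$ (after a Clifford reduction inside $M$, where $D_0$ is normal); it cannot be applied to $\theta$, since you are precisely in the case where $D_0$ is \emph{not} normal in $G$ and need not be normal in $N$. There is no reason for $\theta_{D_0}$ to be homogeneous, and no mechanism forces $c(\theta)_2$ to be read off from any particular constituent of $\theta_{D_0}$. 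All that Lemma~\ref{index2lem} gives is $\Q_n(\varphi) \sbs \Q_n(\theta)$, hence $c(\varphi)_2 \le 2^b$; the reverse inequality is exactly what is missing, and your final paragraph essentially concedes this.

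The paper does not attempt to prove $c(\varphi)_2 = c(\theta)_2$. It observes (via Lemma~\ref{mult}(c) together with $\Q_n(\varphi) \sbs \Q_n(\theta) \sbs \Q_n(\chi)$) that $\Q_n(\chi) = \Q_n(\theta,\xi)$, so $\Q_{2^a} \sbs \Q_n(\theta,\xi)$; and it uses Lemma~\ref{extnormald} to get $\Q_{2^c} \sbs \Q_n(\xi)$ with $2^c = c(\xi)_2$. If $\xi$ is $2$-rational then $\Q_n(\theta) = \Q_n(\theta,\xi) \supseteq \Q_{2^a}$ and one is done. Otherwise $c \ge 2$, whence $i \in \Q_n(\xi)$, and the paper places both $\Q_n(\theta)$ and $\Q_n(\xi)$ above $\Q_n(i)$ and uses that ${\rm Gal}(\Q_{|G|}/\Q_n(i))$ is \emph{cyclic}: any two intermediate fields are then comparable, so either $\Q_n(\xi) \sbs \Q_n(\theta)$ (done directly) or $\Q_n(\theta) \sbs \Q_n(\xi)$, in which case $a \le c$ and one invokes induction on $(H,M,\xi,\varphi)$. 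The key device you are missing is this field-theoretic comparability argument, which replaces your attempted conductor equality.
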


     \begin{proof}
     We argue by induction on $|G:N|$. We may assume that $G/N$ has prime index.
     By Theorem \ref{cliff}, we may assume that $\chi_N=\theta$ is irreducible.
     By Lemma 4.2.(ii) of \cite{NT}, we may assume that
     $G/N$ has order 2.  Let $D$ be a defect group of the block of $\chi$, such that $DN=G$
     and $D_0=D\cap N$ is a defect group of the block of $\theta$. 
     Let $H=\norm G{D_0}$ and $M=H\cap N$. By Lemma \ref{index2lem},      there is $\varphi \in \irr M$ of height zero, $\Q_n(\varphi) \sbs \Q_n(\theta)$ with an extension $\xi \in \irr H$
     such that $\Q(\chi,\varphi)=\Q(\theta,\xi)$. 
     Thus $\Q_n(\chi,\varphi)=\Q_n(\theta,\xi)$. Hence $\Q_{2^a} \sbs \Q_n(\theta,\xi)$. We may assume that $b \ge 2$.
     Suppose that $c(\xi)_2=2^c$. We know that $\Q_{2^c} \sbs \Q_n(\xi)$ by 
     Lemma \ref{extnormald}. If $\xi$ is 2-rational, then we are done.
     So we may assume that $c\ge 2$, and thus $i \in \Q_n(\xi)$. Then $\Q_n(i) \sbs \Q_n(\xi) \cap \Q_n(\theta)$. Since  $\Q_n(\xi), \Q_n(\theta) \sbs \Q_{|G|}$, and ${\rm Gal}(\Q_{|G|}/\Q_n(i))$ is a cyclic 2-group, we then have that  $\Q_n(\xi)\sbs \Q_n(\theta)$ or $\Q_n(\theta) \sbs \Q_n(\xi)$.
     Since $\Q_{2^a} \sbs \Q_n(\theta,\xi)$, we may assume the second.
     Then $\Q_{2^a} \sbs \Q_n(\xi)$.   Thus $a \le c$.  If $H<G$, then by induction $\Q_{2^c} \sbs \Q_n(\varphi) \sbs \Q_n(\theta)$. Thus we may assume that $D_0\nor G$.
     But in this case, we are done by Lemma \ref{normald}.
     \end{proof}
     
      \begin{thm}\label{qs}
    Let $\chi \in \irr G$ of 2-height zero, where $G$
    is a quasi-simple group.  
     Assume in addition that the $2$-block $B$ containing $\chi$ is not (virtual) Morita equivalent over an absolutely unramified complete discrete valuation ring to a $2$-block of any group $H$ with
   $|H:\zent H|<|G:\zent G|$.
    If $c(\chi)=2^am$, where $m$ is odd, then $\Q_{2^a} \sbs \Q_m(\chi)$. 
    \end{thm}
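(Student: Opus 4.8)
The plan is to establish this by running through the quasi-simple groups supplied by the Classification, using the Morita-equivalence hypothesis to cut the problem down to a short list of ``minimal'' blocks. First one reduces to the case $a\ge 2$, since for $p=2$ either $\chi$ is $2$-rational, where there is nothing to prove, or $c(\chi)_2\ge 4$. One may also dispose of the odd part of $\zent G$: if $1\ne K\le\zent G$ has odd order, then $\chi$ lies over a linear character of $K\sbs\zent G$, so Lemma \ref{r}(iii) shows that $\chi$ stays of $2$-height zero as a character of $G/K$ while $c(\chi)$ is unchanged; thus we may assume that $\zent G$ is a $2$-group and in particular that $S:=G/\zent G$ is simple.

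Next one observes that the hypothesis eliminates every $2$-block of defect zero: such a block is a matrix algebra over the coefficient ring, hence Morita equivalent to the principal block of the trivial group, for which $|H:\zent H|=1$. When $S$ is of Lie type in defining characteristic $2$, the only $2$-blocks are the principal block, whose defect group is a full Sylow $2$-subgroup, and blocks of defect zero; the latter are now excluded, and the height zero characters of the principal block are precisely the characters of odd degree (as $|G:D|_2=1$), so the desired inclusion is the main theorem of \cite{NT} applied to $G$ (the finitely many exceptional covers of such $S$, where $\zent G$ has even order and this block description may fail, are treated by direct inspection of their character tables). When $S$ is of Lie type in characteristic $\ne 2$, the Jordan decomposition of blocks of \cite{BDR}, refined by \cite{KL}, shows that $B$ is Morita equivalent over an absolutely unramified complete discrete valuation ring to a $2$-block in $\cent{G^*}{s}$ for a semisimple $2'$-element $s$ of the dual group; unless $s$ is quasi-isolated, $\cent{G^*}{s}$ gives rise to a group $H$ with $|H:\zent H|<|G:\zent G|$ and the hypothesis applies, so only the principal block ($s=1$) and the finitely many quasi-isolated $2$-blocks survive. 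It is important here that these equivalences alter the field of values of a height zero character only by a root of unity of order prime to $2$ (\cite[Theorem 1.6]{KL}); combined with Lemmas \ref{triv} and \ref{useful}, this is exactly why the statement is phrased with $\Q_m(\chi)$ and why passing through the equivalences is legitimate.

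There remain four families to handle directly: (a) the principal $2$-block of a quasi-simple group of Lie type in odd characteristic; (b) the finitely many quasi-isolated $2$-blocks of such groups; (c) quasi-simple groups with $S={\sf A}_n$, including the exceptional covers $2.{\sf A}_n$, $3.{\sf A}_6$, $6.{\sf A}_7$, and so on; and (d) quasi-simple groups with $S$ sporadic. In (c) one uses that the characters of ${\sf S}_n$ are rational, so that every irreducible character of ${\sf A}_n$ has field of values $\Q$ or $\Q(\sqrt d)$ with $d$ an odd integer; then $c(\chi)_2\le 4$, and when it equals $4$ one has $\Q_4=\Q(i)\sbs\Q_{|d|}(\sqrt d)=\Q_m(\chi)$ because $\sqrt{-d}\in\Q_{|d|}$ already, so the conclusion is automatic; for the spin covers one checks similarly that the $2$-height zero characters have fields of values $\Q(\sqrt d)$ with $d$ odd, the large $2$-power-degree basic spin characters not being of height zero. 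Case (d) is a finite verification on the character tables of the sporadic groups and their covers. For (a) and (b) one typically works first inside a group $\tilde G$ with connected centre containing $G$, with $\tilde G/G$ abelian, where Lusztig's Jordan decomposition and the formulae for the Galois action on Deligne--Lusztig characters take their cleanest form, computes $\Q(\chi)$ and the height there, and then descends to $G$ by Theorem \ref{normal subgroup}; this is combined with the description of heights in these blocks supplied by $e$-Harish-Chandra theory (with $e\in\{1,2\}$), comparing $c(\chi)_2$ with the stabilizer of $\chi$ in ${\rm Gal}(\Q_{2^am}/\Q_m)$.

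The hard part will be cases (a) and (b) for the classical groups in odd characteristic: controlling the precise $2$-part of the conductor of a $2$-height zero character in a quasi-isolated $2$-block and ruling out the forbidden fields $\Q(\sqrt 2)$ and $\Q(\sqrt{-2})$. The delicate points are that semisimple characters, and their twists by the unipotent characters of $\cent{G^*}{s}$, carry fields of values governed by the semisimple label $s$, whose $2$-adic behaviour under the Galois action must be tracked carefully; that the exceptional Schur multipliers, together with the spin double covers of alternating and of orthogonal groups, introduce genuine quadratic irrationalities that must be shown to have the permitted odd shape; and that one must record the Galois-equivariance of the Morita equivalences of \cite{BDR} and \cite{KL} so that the conclusion can be transported back across the reductions made at the start.
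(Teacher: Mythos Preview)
Your broad strategy is the paper's: split along the Classification, use the Morita-equivalence hypothesis with \cite{BDR}, \cite{FK}, \cite{KL} to reduce Lie-type groups in odd characteristic to quasi-isolated $2$-blocks, handle defining characteristic $2$ via \cite{Hum} and \cite{NT}, and check alternating and sporadic groups separately. Two points, however, are misjudged.

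First, you have the difficulty in odd characteristic located in the wrong family. For classical groups not of type $A$, Bonnaf\'e's tables \cite[Table~2]{Bon} show that an \emph{odd-order} quasi-isolated element must be trivial; hence only the principal $2$-block survives, and by \cite[Theorem~21.14]{CE} its height-zero characters have odd degree, so \cite[Theorem~A1]{NT} finishes at once. In type $A$ a non-trivial quasi-isolated $s$ is never isolated, and \cite{BDR} again produces a Morita equivalence to a smaller group, contrary to hypothesis. So the classical case is short. The genuinely delicate family is the \emph{exceptional} groups in odd characteristic: there non-trivial odd-order quasi-isolated $s$ do occur (for instance $\CB_{\GC^\ast}(s)$ of type $E_6A_2$ inside $E_8$), and one must analyse these blocks directly, comparing $c(\chi)_2$ with the order of the $2$-part $t$ of the Jordan label and proving $\Q_{o(t)}\sbs\Q_{o(s)}(\chi)$ via a Sylow-torus/Weyl-group fusion argument, together with a separate treatment of the non-maximal-defect unipotent blocks. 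Your outline does not address this case.

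Second, your handling of $2\AAA_n$ is not justified. It is not clear that every $2$-height-zero spin character has field $\Q(\sqrt d)$ with $d$ odd, and excluding the basic spin characters does not by itself control the conductors of the rest. The paper's argument is block-theoretic: by \cite{De} a $2$-block of $2\AAA_n$ containing a height-zero spin character has weight $w\le 1$, hence central defect, so such a character has relative $2$-defect zero and (via \cite{N0}) vanishes whenever the $2$-part of the element is outside $\zent G$; this forces $2$-rationality outright. Your alternative argument for $\AAA_n$ itself (odd $d$, then $i=\sqrt{-d}/\sqrt d\in\Q_{|d|}(\sqrt d)$) is a valid shortcut, though the paper sharpens it to $2$-rationality using the parity of $\prod_i(2h_i+1)$.
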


\begin{thm}\label{simple2}
Theorem \ref{qs} is true in the case $G/\ZB(G)$ is a simple group of Lie type in characteristic $2$.
\end{thm}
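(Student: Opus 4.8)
The plan is to show that, with the possible exception of finitely many small groups with exceptional Schur multiplier, every $2$-height zero character $\chi$ of $G$ is in fact $2$-rational, so that $a=0$ and the assertion holds trivially.

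\textbf{Reduction to the simply connected group.} Put $S=G/\ZB(G)$. If $S$ is one of the finitely many simple groups of Lie type in characteristic $2$ whose Schur multiplier strictly exceeds the one coming from the simply connected algebraic group $\mathbf{G}$ (the ``exceptional multiplier'' groups, together with the generic exceptions such as $\Sp_4(2)'$, $G_2(2)'$, ${}^2F_4(2)'$, $\mathrm{A}_6$), I would verify the statement directly from the character table; here the Morita-equivalence hypothesis in Theorem \ref{qs} already removes many cases via \cite{BDR} and \cite{KL}. Otherwise $G$ is a central quotient of $G':=\mathbf{G}^F$, and since in characteristic $2$ the group $\ZB(\mathbf{G})(\overline{\mathbb{F}}_2)$ --- hence $\ZB(G')$ --- has odd order (the $2$-part of the fundamental group of $\mathbf{G}$ vanishes over $\overline{\mathbb{F}}_2$), Lemma \ref{r}(iii) lets me replace $\chi$ by its lift to $G'$, of the same conductor and still of $2$-height zero. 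So I may assume $G=\mathbf{G}^F$ is simply connected.

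\textbf{The defect zero case.} I would next invoke the classical description of the $2$-blocks of a group of Lie type in its defining characteristic: besides the principal block (with defect group a Sylow $2$-subgroup) there are only blocks of defect zero. Hence $\chi$ has either odd degree (lying in the principal block) or $2$-defect zero. In the latter case $\chi$ must be the Steinberg character $\St$ --- after passing to a group with connected centre, an odd-order extension of $G$, the defect zero characters are the twists $\widehat{s}\cdot\St$ with $s$ central in the dual group, and $G=[G,G]$ forces $\widehat{s}=1$ --- and $\St$ is rational-valued, so $c(\chi)=1$ and there is nothing to prove.

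\textbf{The odd degree case.} For $\chi$ of odd degree I would pass through a regular embedding $\mathbf{G}\hookrightarrow\widetilde{\mathbf{G}}$ (connected centre, $[\widetilde{\mathbf{G}},\widetilde{\mathbf{G}}]=\mathbf{G}$), so that $\widetilde{G}:=\widetilde{\mathbf{G}}^F\supseteq G$ with $\widetilde{G}/G\cong(\widetilde{\mathbf{G}}/\mathbf{G})^F$ the group of $F$-points of a torus, hence of odd order. Choosing $\widetilde{\chi}\in\irr{\widetilde{G}}$ over $\chi$, the ratio $\widetilde{\chi}(1)/\chi(1)$ divides $[\widetilde{G}:G]$ and so is odd; thus $\widetilde{\chi}$ has odd degree and is again of $2$-height zero, and by a routine transfer argument (Theorem \ref{normal subgroup} and Lemma \ref{useful}, using that $[\widetilde{G}:G]$ is odd) it suffices to prove $\widetilde{\chi}$ is $2$-rational. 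Here I would apply Lusztig's Jordan decomposition: $\widetilde{\chi}$ lies in a series $\mathcal{E}(\widetilde{G},s)$ for a semisimple $s\in\widetilde{G}^{*F}$ (of odd order, being semisimple in characteristic $2$), and corresponds to a unipotent character $\psi$ of $\cent{\widetilde{G}^{*}}{s}$ with $\widetilde{\chi}(1)=|\widetilde{G}^{*}:\cent{\widetilde{G}^{*}}{s}|_{2'}\,\psi(1)$. Since the index is odd, $\psi(1)$ is odd; but in characteristic $2$ the trivial character is the only unipotent character of odd degree (every other one has generic degree of positive $a$-invariant, hence degree divisible by $q$), so $\psi=1$ and $\widetilde{\chi}=\chi_s$ is the semisimple character of the series. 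Finally I would use that the parametrisation $s\mapsto\chi_s$ is compatible with the Galois action: an automorphism $\sigma\in\mathrm{Gal}(\Q_{|\widetilde{G}|}/\Q_{|\widetilde{G}|_{2'}})$ sends $\chi_s$ to $\chi_{s'}$ with $\widehat{s'}=\widehat{s}^{\sigma}$, and as $o(s)$ divides $|\widetilde{G}^{*}|_{2'}$ this forces $s'=s$; hence $\Q(\widetilde{\chi})\subseteq\Q_{|\widetilde{G}|_{2'}}$, i.e.\ $\widetilde{\chi}$ is $2$-rational.

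\textbf{Main obstacle.} I expect the real work to lie in: (i) pinning down and verifying by direct computation the finitely many exceptional-multiplier and generic-exception groups, in coordination with the Morita-equivalence reductions of \cite{BDR} and \cite{KL} that eliminate most of them; (ii) establishing the implication ``odd-degree unipotent character $\Rightarrow$ trivial'' uniformly across all types, most delicately for the Suzuki and Ree groups, where the cuspidal unipotent characters carry a factor $\sqrt{2q}$ (respectively $\sqrt{q}$) in their degree and so are still of even degree; and (iii) handling the disconnected-centre refinement of Jordan decomposition and the Galois-equivariance of the semisimple-character parametrisation with the precision required by the literature. By contrast, the two ``vertical'' reductions --- from $G$ to its simply connected form, and from $\widetilde{G}$ back down to $G$ --- are routine given Lemma \ref{r}, Theorem \ref{normal subgroup} and Lemma \ref{useful}.
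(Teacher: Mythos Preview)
Your proposal is correct and structurally matches the paper's proof: handle the finitely many exceptional-multiplier cases by direct computation, reduce to $G=\mathbf{G}^F$ simply connected, and then use Humphreys' dichotomy that every $2$-block of a Lie-type group in characteristic $2$ is either of defect zero (forcing $\chi=\St$, rational) or of maximal defect (forcing $\chi(1)$ odd).

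The one genuine divergence is in the odd-degree branch. The paper simply cites \cite[Theorem A1]{NT}, the already-established result that odd-degree characters satisfy the desired conclusion. You instead propose to re-derive this in situ: pass to a regular embedding $\widetilde{G}$ (with $|\widetilde{G}:G|$ odd), apply Jordan decomposition to see that an odd-degree $\widetilde{\chi}$ must be a semisimple character $\chi_s$ (because the only unipotent character of degree prime to $q$ is the trivial one), and then use Galois-equivariance of $s\mapsto\chi_s$ together with $o(s)$ odd to conclude $2$-rationality of $\widetilde{\chi}$, descending to $\chi$ via Theorem \ref{normal subgroup} (or, more simply, Corollary \ref{cor}). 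This is a valid and self-contained route, and the obstacles you flag in (ii) and (iii) are precisely the technical content of \cite[Theorem A1]{NT} in this special case; but given that \cite{NT} is available, the paper's one-line citation is cleaner. Also note that the paper does \emph{not} invoke the Morita-equivalence hypothesis of Theorem \ref{qs} to prune the exceptional-multiplier list---it just checks them all with \cite{GAP}---so your remark that \cite{BDR} and \cite{KL} ``already remove many cases'' is an optimization the paper does not pursue here.
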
    
     
\begin{proof}
In the case $S:=G/\ZB(G)$ is isomorphic to $\AAA_5$, $\AAA_6$, $\AAA_8$, $\SL_3(2)$, $\SU_4(2)$, $\Sp_6(2)$,
$\PSL_3(4)$, $\PSU_6(2)$, $\Omega^+_8(2)$, $\tw2 B_2(8)$, $G_2(4)$, $F_4(2)$, $\tw2 F_4(2)'$, or $\tw2 E_6(2)$, the statement
is checked using \cite{GAP}.
Hence we may assume that $S$ is not isomorphic to any of these simple groups.
This implies that $G$ is a quotient (by a central subgroup) of $\GC^F$, where $\GC$ is a simple, simply connected, algebraic 
group in characteristic $2$ and $F:\GC\to\GC$ a Steinberg endomorphism. It follows from the main result of \cite{Hum} that
any $2$-block $B$ of $G$ is either of defect $0$, or of maximal defect. Moreover, in the former case $\chi \in \Irr(B)$ is just the 
Steinberg character; in particular it is rational and so we are done. In the latter case, $\chi(1)$ is odd, and the statement follows
from \cite[Theorem A1]{NT}.
\end{proof} 

\begin{thm}\label{alt}
Theorem \ref{qs} is true in the case $G/\ZB(G)$ is an alternating or sporadic simple group.
\end{thm}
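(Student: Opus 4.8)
The plan is to handle the two families — alternating groups and sporadic groups — separately, since their character theory is of a rather different flavor. For the sporadic groups (and their covers), the approach is essentially a finite check: there are only finitely many quasi-simple groups $G$ with $G/\ZB(G)$ sporadic, and for each of them the relevant $2$-blocks, their defect groups, and the conductors of the $2$-height zero characters can be read off from the \textsf{GAP} character table library (as in the proof of Theorem~\ref{simple2}); alternatively the Atlas together with the known $2$-modular block data suffices. For each $2$-height zero $\chi$ one computes $c(\chi)=2^a m$ and checks directly that $\Q_{2^a}\sbs\Q_m(\chi)$, discarding those blocks that are virtual Morita equivalent over an absolutely unramified ring to a block of a smaller group. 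Since the defect groups of $2$-blocks of sporadic groups are completely tabulated and the number of characters is modest, this step, while tedious, is routine.

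For the alternating groups the real work lies in $G = \tilde{\AAA}_n$, the double (or, for $n=6,7$, the six- or three-fold) covers, and in $\AAA_n$ itself. First I would reduce to $\SSS_n$ and its covers where possible: a $2$-height zero character $\chi\in\irr{\AAA_n}$ either extends to $\SSS_n$ or is one of a pair fused under $\SSS_n$, and in the latter case $\Q(\chi)$ differs from the field of values of the corresponding $\SSS_n$-character by at most a quadratic (in fact by $\sqrt{\pm n}$-type) extension; Theorem~\ref{normal subgroup} lets one pass information between $\AAA_n$ and $\SSS_n$. For $\SSS_n$, recall that all ordinary irreducible characters are rational, so $c(\chi)_2=1$ for $\chi\in\irr{\SSS_n}$ and the statement is vacuous there; hence the only genuinely new content is the non-rationality that appears in the spin (faithful) characters of $\tilde{\SSS}_n$ and $\tilde{\AAA}_n$, and in the split characters of $\AAA_n$. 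For the spin characters one invokes the classical description (Schur, and the combinatorics of shifted diagrams): the field of values of a basic spin character is $\Q$ or $\Q(\sqrt{\pm n})$ or a similar explicit quadratic or biquadratic field, so $c(\chi)_2\in\{1,2,4\}$ with the $2$-part governed by $n \bmod 4$ and by the parity of the number of parts, and in each case one checks $\Q_{2^a}\sbs\Q_m(\chi)$ by hand — for instance when $c(\chi)_2=4$ one needs $i\in\Q_m(\chi)$, which holds because the relevant $\sqrt{-(\text{odd})}$ or $\sqrt{2}$-versus-$\sqrt{-2}$ dichotomy forces $i$ into the field once the odd part $m$ is adjoined.

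The main obstacle I expect is bookkeeping the height-zero condition in the covering groups: one must identify precisely which faithful characters of $\tilde{\AAA}_n$ lie in $2$-blocks of maximal defect (equivalently have odd degree after dividing out the relevant $2$-power coming from the spin representation), because the blocks of $\tilde{\SSS}_n$ and $\tilde{\AAA}_n$ are not all of maximal or zero defect — there are blocks of intermediate defect, governed by the $\bar{2}$-core/$\bar{2}$-quotient theory of shifted partitions (Cabanes, Michler, Olsson). Restricting attention to the $2$-height zero characters, together with the exclusion of the virtually-Morita-reducible blocks, should cut the problem down to a short explicit list of character fields, but verifying that the maximal-defect spin characters indeed have the stated quadratic fields of values — and that none of them has field of values $\Q(\sqrt 2)$ or $\Q(\sqrt{-2})$, which would violate Theorem~A — is where care is needed. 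Once that list is in hand, the containment $\Q_{2^a}\sbs\Q_m(\chi)$ is immediate in each case, and combining with the sporadic check completes the proof.
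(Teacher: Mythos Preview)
Your treatment of the sporadic case matches the paper's: a finite \textsf{GAP} check. For the alternating groups, however, the paper takes a much shorter route than the one you sketch, and your plan has a gap where the paper has a clean argument.

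For $G=\AAA_n$ (with $n\ge 9$), the paper does not pass through $\SSS_n$ or Theorem~\ref{normal subgroup} at all. Instead it observes directly that any $\chi\in\irr{\AAA_n}$ is $2$-rational: either $\chi$ extends to $\SSS_n$ and is rational, or it is a split character labeled by a self-associated partition, whose only irrational values are $\tfrac12\bigl((-1)^{(n-k)/2}\pm\sqrt{(-1)^{(n-k)/2}\prod_i(2h_i+1)}\bigr)$ with the $2h_i+1$ odd. A one-line congruence shows $(-1)^{(n-k)/2}=1$ exactly when $\prod_i(2h_i+1)\equiv 1\pmod 4$, so the integer under the radical is always $\equiv 1\pmod 4$, hence $c(\chi)_2=1$. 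No case analysis of quadratic fields is needed.

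For $G=2\AAA_n$ the paper avoids the shifted-diagram combinatorics entirely. The key point you are missing is a structural one: by \cite[Proposition~3.1]{De}, a $2$-block of $2\AAA_n$ containing a height-zero \emph{spin} character must have weight $w\in\{0,1\}$. Since the defect groups of the corresponding block of $\AAA_n$ are Sylow $2$-subgroups of $\AAA_{2w}$, they are trivial, and hence (by \cite[Lemma~2.2]{De}) the defect group of the block of $2\AAA_n$ is exactly $\ZB(G)$. So every height-zero spin character has central defect, i.e.\ relative defect zero, and then a short argument (vanishing off $2'$-elements modulo $\ZB(G)$) shows it is $2$-rational. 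Thus for $n\ge 9$ \emph{every} $2$-height zero character of $\AAA_n$ or $2\AAA_n$ is $2$-rational, and the statement is vacuous.

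Your plan, by contrast, anticipates genuine cases with $c(\chi)_2=4$ and proposes to verify $i\in\Q_m(\chi)$ by inspecting explicit fields of values of spin characters. That programme could perhaps be carried out, but it is both harder and unnecessary, and your sketch does not provide the mechanism that actually pins down the height-zero spin characters --- you note that intermediate-defect blocks exist and that ``care is needed'', but without Denoncin's weight bound you have no way to produce the ``short explicit list'' you rely on. Replace that step with the central-defect observation and the proof collapses to two paragraphs.
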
    
     
\begin{proof}
In the case $S:=G/\ZB(G)$ is isomorphic to $\AAA_n$ with $5 \leq n \leq 8$ or one of $26$ sporadic simple groups, the statement
is checked using \cite{GAP}.
Hence we may assume that $S = \AAA_n$ with $n \geq 9$.

\smallskip
(a) First we consider the case $G=S$. If $\chi$ extends to $\SSS_n$ then $\chi$ is rational. Otherwise, \cite[Theorem 2.5.13]{JK} shows
that the $\SSS_n$-character lying above $\chi$ is labeled by a self-associated partition of $n$, with hook lengths 
along the main diagonal of the Young diagram being the $k \geq 1$ odd integers $2h_1+1 > 2h_2+1 > \ldots > 2h_k+1$, in which 
case the only possible irrational values of $\chi$ are 
$$\frac{(-1)^{(n-k)/2} \pm \sqrt{(-1)^{(n-k)/2}\prod^k_{i=1}(2h_i+1)}}{2}.$$
Since $n = \sum^k_{i=1}(2h_i+1)$, we see that $(-1)^{(n-k)/2} = 1$ if and only if $\prod^k_{i=1}(2h_i+1) \equiv 1 \pmod{4}$.
It follows that $\chi$ is $2$-rational, and we are done in this case.

\smallskip
(b) It remains to handle the case $G = 2\AAA_n$. We change the notation, and let $\tilde B$ the $2$-block of $G$ containing $\chi$.
We also embed $G$ in a double cover $\tilde G = 2\SSS_n$ of $\SSS_n$. By \cite[Lemma 2.2]{De}, $\tilde B$ contains a unique 
block $B$ of $G/\ZB(G) \cong \AAA_n$. Similarly, by \cite[Lemma 2.1]{De}, $\tilde B$ is covered by a unique block 
$\tilde B_s$ of $\tilde G$, and $B$ is covered by a unique block $B_S$ of $(\tilde G)/\ZB(G) \cong \SSS_n$. All these blocks 
$\tilde B$, $\tilde B_S$, $B$, and $B_S$ have the same weight $w \geq 0$.

If $\chi$ is trivial on $\ZB(G)$, then we are done by (a). Hence we may assume that $\chi$ is a spin character of $G$. Since $\tilde B$ 
contains a spin character of height zero, by \cite[Proposition 3.1]{De} we must have that $w \in \{0,1\}$ (and $\chi$ is the unique 
spin character of height zero in $\tilde B$). The defect groups $D$ of $B$ are the Sylow $2$-subgroups of $\AAA_{2w}$, and hence
$D=1$. This implies by \cite[Lemma 2.2]{De} that the defect groups $\tilde D$ of $\tilde B$ are precisely $\ZB(G)$. Thus we are 
in the case of central defect, and $\chi$ has relative defect zero, with respect to the faithful linear character $\mu$ of $\ZB(G)$.
Now \cite[Theorem 2.1]{N0} gives an explicit bijection between defect zero characters of $G/\ZB(G)$ and relative defect zero characters
of $G$. In particular, this bijection implies that $\chi(g) = 0$ whenever the $2$-part $g_2$ of $g$ is not in $\ZB(G)$. 

We now show that $\chi$ is $2$-rational. Indeed, write $g = g_2g_{2'}$ as the product of the $2$-part and the $2'$-part of $g$.
By the above, $\chi(g)=0$ if $g_2 \notin \ZB(G)$. If $g_2 \in \ZB(G)$, then $\chi(g) = \mu(g_2)\chi(g_{2'}) = \pm \chi(g_{2'})$ is 
$2$-rational. Hence the statement follows in this case as well.
\end{proof}    
     
\begin{thm}\label{clas}
Theorem \ref{qs} is true in the case $G/\ZB(G)$ is a simple classical group in odd characteristic.
\end{thm}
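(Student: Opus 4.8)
The plan is to reduce, by a sequence of standard block-theoretic manipulations, to the case of quasi-simple classical groups in odd characteristic $p_0$ and then invoke the known classification of $2$-blocks and their defect groups together with the results on Galois action on the decomposition matrices coming from \cite{BDR} and its refinement \cite{KL}. First I would note that, using Theorem \ref{cliff} and Theorem \ref{normal subgroup}, it suffices to establish the statement after passing through central quotients and covers; in particular, using the Morita-equivalence hypothesis in the statement of Theorem \ref{qs}, we may assume $G=\GC^F$ or a suitable quotient/cover, where $\GC$ is a simple simply connected algebraic group of classical type over $\overline{\FF}_{p_0}$ with $p_0$ odd, and $F$ a Steinberg endomorphism. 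The cases where $S=G/\ZB(G)$ is one of the finitely many ``generic exceptions'' (small-rank groups, or groups with exceptional Schur multiplier such as $\PSL_3(4)$, $\PSp_6(2)$-type coincidences, $\PSU_4(3)$, $\PSU_6(2)$, $\Omega^+_8(2)$, etc.) would be dispatched by direct computation in \cite{GAP}, exactly as in the proofs of Theorems \ref{simple2} and \ref{alt}.

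For the generic case, I would first handle blocks of maximal defect: there $\chi(1)$ is odd, so $\chi$ has $2'$-degree, and the statement follows immediately from \cite[Theorem A1]{NT} (the analogue of Theorem A for odd-degree characters, already quoted in the proof of Theorem \ref{simple2}). So the real content is blocks of non-maximal positive defect. For those I would use the description of $2$-blocks of classical groups in odd characteristic in terms of their unipotent part (Fong--Srinivasan) and Bonnaf\'e--Rouquier's Jordan-decomposition-type Morita equivalence: a $2$-block $B$ of $G$ with a non-central, non-maximal defect group corresponds (via \cite{BDR}, and using the Morita-equivalence hypothesis to rule out the ``already smaller'' cases) to a unipotent block, or a block of a proper Levi-type subgroup, in a way that is compatible with the action of $\Gal(\Q_{|G|}/\Q)$ up to a controlled $2'$-adjustment; this is precisely where \cite[Theorem 1.6]{KL} is applied, exactly as in the paragraph preceding the statement of Theorem \ref{reduction}. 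One then reduces to checking the conductor inequality $\Q_{2^a}\subseteq\Q_m(\chi)$ for unipotent $2$-height zero characters of classical groups, or for characters obtained from these by tensoring with a linear character of $2$-power order coming from the center of the dual group.

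The core computation is therefore with unipotent characters. Here I expect the key input to be that unipotent characters of classical groups are $2$-rational (their values lie in $\Q$ for type $A$, and in $\Q$ for the relevant classical types apart from a short list of unipotent characters in types $B,C,D$ labeled by non-degenerate symbols, which at worst introduce $\sqrt{q}$ and hence nothing $2$-adically problematic once $q$ is odd), so that $c(\chi)_2$ is governed entirely by the linear/central-character twist, whose conductor is a $2$-power dividing $q^{\pm}-1$ and which by construction lies in $\Q_m(\chi)$ because $m$ carries the relevant $2'$-information. More precisely, writing $\chi=\hat s\cdot\chi_u$ via Jordan decomposition with $s$ a $2$-element of the dual group and $\chi_u$ unipotent, the field $\Q(\chi)$ is generated over $\Q(\chi_u)$ by the values of the linear character $\hat s$, whose conductor's $2$-part is exactly $2^a$; one then checks $\Q_{2^a}\subseteq\Q(\hat s)\subseteq\Q_m(\chi)$ by an elementary Galois/cyclotomic argument analogous to those in Section 2 (Lemmas \ref{useful}, \ref{triv}), using that the semisimple class of $s$ is determined up to $\Gal$-conjugacy in a way controlled by $m$. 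The main obstacle I anticipate is bookkeeping the interaction between the Bonnaf\'e--Rouquier/Kessar--Linckelmann Morita equivalences and the Galois action precisely enough to guarantee that the $2'$-modulus $m$ really does absorb all the non-$2$-rationality of the unipotent part — in other words, making the phrase ``up to a $2'$-adjustment'' rigorous in the presence of the height-zero constraint — together with the separate and somewhat delicate analysis of the spin-type blocks (blocks covered by faithful blocks of the full classical group $\mathrm{Spin}$, $\mathrm{CSp}$, etc.) where the center contributes a genuine $2$-power and one must track the linear twist carefully, much as in part (b) of the proof of Theorem \ref{alt}.
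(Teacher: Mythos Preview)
Your proposal misses the decisive simplification and, as a consequence, heads into an analysis that is both unnecessary and not correct as stated.

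The point of the extra hypothesis in Theorem \ref{qs} (that $B$ is \emph{not} Morita equivalent over an absolutely unramified ring to a block of a group $H$ with $|H:\zent H|<|G:\zent G|$) is that it eliminates almost everything. By Brou\'e--Michel, $\Irr(B)\subseteq\EC_2(G,s)$ for some semisimple $s\in G^*$ of \emph{odd} order. If $s$ is not quasi-isolated, Bonnaf\'e--Rouquier \cite{BR} (descended by \cite{FK}) already gives the forbidden Morita equivalence. So $s$ is quasi-isolated; but for $\GC$ classical and not of type $A$, Bonnaf\'e's tables \cite[Table 2]{Bon} show that the only quasi-isolated element of odd order is $s=1$. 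Then $\EC_2(G,1)$ is the principal $2$-block by \cite[Theorem 21.14]{CE}, $\chi(1)$ is odd, and \cite[Theorem A1]{NT} finishes. For type $A$, any $s\neq 1$ fails to be isolated, so \cite{BDR} together with \cite{FK} again produces the forbidden Morita equivalence. That is the entire argument: there are \emph{no} surviving non-principal blocks to analyse, and your ``real content is blocks of non-maximal positive defect'' never arises.

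Your detour through Fong--Srinivasan and an explicit Jordan-decomposition factorisation is therefore not needed, and it also contains errors. The element $s$ labelling the block has odd order, not $2$-power order; and the formula $\chi=\hat s\cdot\chi_u$ with $\hat s$ a linear character and $\chi_u$ unipotent is not available for general classical groups (it holds essentially only when $s$ is central in the dual, or in type $A$ under special circumstances), so the argument you sketch for tracking $c(\chi)_2$ via the values of $\hat s$ does not go through. The ``bookkeeping'' obstacle you anticipate is avoided entirely in the paper because the hypothesis of Theorem \ref{qs} is used to discard those blocks outright rather than to transfer the problem to them.
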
    
     
\begin{proof}
In the case $S:=G/\ZB(G)$ is isomorphic to $\PSU_4(3)$, $\PSU_6(2)$, $\Omega_7(3)$, or $G_2(3)$, the statement
is checked using \cite{GAP}. (Note that in Theorem \ref{qs} we are dealing with a $2$-block $B$ of $G$, and so we may assume 
${\bf O}_{2'}({\bf Z}(G))$ is cyclic. Hence in the case of covers of $\PSU_4(3)$, it suffices to handle 
the two covers $12_1 \cdot \PSU_4(3)$ and $12_2 \cdot \PSU_4(3)$, which are given in \cite{GAP}.)

Hence we may assume that $S$ is not isomorphic to any of these simple groups, as well 
as any Lie type group in characteristic $2$.
This implies that $G$ is a quotient (by a central subgroup) of $\GC^F$, where $\GC$ is a simple, simply connected, algebraic 
group in odd characteristic $r \neq 2$ and $F:\GC\to\GC$ a Steinberg endomorphism. Without any loss, we may 
replace $G$ by $\GC^F$. Let $(\GCD,\FD)$ be dual to $(\GC,F)$; in particular, $\GCD$ is of adjoint type,
and let $\GD:= (\GCD)^{\FD}$.

By the main result of \cite{BM}, the set $\Irr(B)$ of complex characters in the $2$-block $B$ containing $\chi$ is contained
in $\EC_2(G,s)$ for some semisimple element $s \in \GD$ of odd order. Suppose that $s$ is not quasi-isolated (in the sense of
\cite{Bon}). Then, by the main result of \cite{BR}, $B$ is Morita equivalent to a $2$-block of a group $H$ with
$|H:\ZB(H)| < |G:\ZB(G)|$. Moreover, by \cite[Proposition 4.2]{FK} this Morita equivalence descends to an absolutely unramified discrete valuation ring contrary to our assumption.
Hence we may assume that $s$ is quasi-isolated.

Assume in addition that $\GC$ is not of type $A$. By the classification result of Bonnaf\'e \cite[Table 2]{Bon}, the odd-order assumption
on $s$ implies that $s=1$. In this case, by \cite[Theorem 21.14]{CE}, $\EC_2(G,s)$ is just the set of irreducible characters in
the principal $2$-block $B_0$ of $G$. In such a case, $\chi(1)$ is odd, and the statement follows
from \cite[Theorem A1]{NT}.

It remains to consider the case $\GC$ is of type $A$. The same arguments as in the preceding paragraph allow us to assume
that $s \neq 1$, and so $s$ is {\bf not} isolated, see \cite[Table 2]{Bon}. The main result of \cite{BDR} together with \cite[Proposition 4.2]{FK} now shows 
that $B$ is again Morita equivalent over an absolutely unramified complete discrete valuation ring to a $2$-block of a group $H$
with $|H:\ZB(H)| < |G:\ZB(G)|$,  contrary to our assumption.
\end{proof}     
     
          
\begin{thm}\label{exc}
	Theorem \ref{qs} is true in the case $G/\ZB(G)$ is a simple exceptional group in odd characteristic.
\end{thm}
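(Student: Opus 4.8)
The plan is to mirror the argument used for the classical groups in Theorem \ref{clas}. First I would dispose, via \cite{GAP}, of the short list of exceptional groups of small defining characteristic or rank for which the generic results quoted below admit exceptions (this includes $G_2(3)$, which was already settled in Theorem \ref{clas}). After that we may assume $G=\GC^F$, where $\GC$ is a simple, simply connected algebraic group of exceptional type in odd characteristic $r$ and $F:\GC\to\GC$ is a Steinberg endomorphism; this also covers the Ree groups $\tw2 G_2(q)$. Let $(\GCD,\FD)$ be dual to $(\GC,F)$, so that $\GCD$ is of adjoint type, and put $\GD=(\GCD)^{\FD}$. By the main result of \cite{BM}, $\Irr(B)\subseteq\EC_2(G,s)$ for some semisimple element $s\in\GD$ of odd order.

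If $s$ is not quasi-isolated in $\GCD$ (in the sense of \cite{Bon}), then by the main result of \cite{BR}, together with \cite[Proposition 4.2]{FK}, the block $B$ is Morita equivalent over an absolutely unramified complete discrete valuation ring to a $2$-block of a group $H$ with $|H:\ZB(H)|<|G:\ZB(G)|$, contrary to our hypothesis; so this situation does not arise. If $s=1$, then --- once the finitely many small exceptions have been handled by \cite{GAP} --- \cite[Theorem 21.14]{CE} gives that $\EC_2(G,1)$ is the principal $2$-block $B_0$ of $G$; in this case $\chi(1)$ is odd and the statement follows from \cite[Theorem A1]{NT}.

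It remains to treat the case in which $s$ is quasi-isolated and $s\ne1$. Using Bonnaf\'e's classification \cite[Table 2]{Bon} of quasi-isolated classes in adjoint groups of exceptional type, one enumerates the finitely many possibilities for such an $s$ of odd order (they have order $3$, except for one class of order $5$ in type $E_8$). In each case $\cent{\GCD}{s}^{\circ}$ is a proper connected reductive subgroup, so the (possibly disconnected) Jordan decomposition of \cite{BDR}, in the refined form of \cite{KL} and made to descend to an absolutely unramified complete discrete valuation ring via \cite[Proposition 4.2]{FK}, shows that the union of the $2$-blocks of $\EC_2(G,s)$ --- and in particular $B$ --- is virtual Morita equivalent, over such a ring, to a block of a group $H$ built from $\cent{\GD}{s}$; a case-by-case inspection shows that $|H:\ZB(H)|<|G:\ZB(G)|$ for each relevant $s$. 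This again contradicts the hypothesis of Theorem \ref{qs}, so this case is vacuous, and the proof is complete.

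I expect the last paragraph to be the main obstacle: one must be sure that \cite{BDR}, as refined by \cite{KL}, genuinely provides a \emph{virtual Morita} equivalence over an \emph{absolutely unramified} ring for \emph{every} quasi-isolated $s\ne1$ of odd order in an exceptional type --- including in the Ree groups $\tw2 G_2(q)$ --- and that the target group $H$ is in every instance strictly smaller than $G$ modulo centres. A secondary, bookkeeping difficulty is to pin down exactly which small groups must be checked with \cite{GAP}, namely the precise exceptions to \cite[Theorem 21.14]{CE} and to the Morita-equivalence inputs above.
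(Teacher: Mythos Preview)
Your overall plan follows the classical case too closely, and two of your three main steps break down for exceptional types.

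First, for $s=1$ your claim that $\EC_2(G,1)$ coincides with the principal $2$-block is false in general for exceptional groups. \cite[Theorem 21.14]{CE} gives this for classical types $B$, $C$, $D$, but in exceptional types there exist unipotent $2$-blocks of non-maximal (and non-central) defect; the paper has to treat these explicitly via \cite[Lemma 7.1]{Expectional}, handling a dihedral-defect case and an $E_8(q)$ case by hand using \cite{Sambale} and the explicit description in \cite[Lemma 7.4]{Expectional}. So ``$\chi(1)$ is odd'' is simply not available here.

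Second, and more seriously, your last paragraph cannot work. The input \cite{BDR} applies only when $s$ is \emph{not isolated}, i.e.\ when $\CB^\circ_{\GCD}(s)$ lies in a proper Levi; this is exactly what lets the paper reduce to the isolated case, just as \cite{BR} reduced to the quasi-isolated case. But there remain genuinely isolated semisimple elements $s\neq 1$ of odd order in exceptional adjoint groups---for instance in $E_8$ with centralizer of type $A_8$, $A_4A_4$, or $E_6A_2$---and for these neither \cite{BR} nor \cite{BDR} provides any (virtual) Morita equivalence to a smaller group. The paper deals with these cases directly: when the centralizer has only classical components it uses a regular embedding, the Galois behaviour of Jordan decomposition from \cite[Theorem 4.7.9, Proposition 4.5.5]{GeckMalle}, and an argument with Sylow $d$-tori from \cite{MalleHZ} and \cite{BroughRuhstorfer} to pin down $c(\chi)_2$ in terms of the order of the $2$-part $t$; the residual $E_6A_2$ case in $E_8$ is transferred, via a Galois-equivariant Jordan decomposition and \cite[Proposition D]{Expectional}, to unipotent blocks of the centralizer, which have already been handled. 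None of this is captured by a Morita reduction, and your own concluding caveat correctly identifies exactly where the proposal fails.
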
              
          
\begin{proof}
	We keep the same notation from Theorem \ref{clas}. That is $G$ is a (quotient by a central subgroup) of $\GC^F$, where $\GC$ is a simple, simply connected, algebraic group in odd characteristic $r \neq 2$ and $F: \GC \to \GC$ is a Steinberg endomorphism. Arguing as in Theorem \ref{clas}, we can also assume that $s$ is isolated. By Lemma \ref{useful}, we observe that is suffices to prove that $\Q_{c(\psi)_2} \subset \Q_{|G|_{2'}}(\psi)$ for every height zero character $\psi \in \Irr_0(B)$. We may also assume $c(\psi)_2 \geq 4$ since otherwise $\Q_{c(\psi)_2}=\Q$ and the statement is trivally true.
	
	\smallskip
	(a) Let us first assume that 
	the defect group of $B$ has order $|G^\ast: \CB_{G^\ast}(s)|_2$ and $\CB_{\GC^\ast}^\circ(s)$ has only components of classical type. Then $\psi \in \mathcal{E}(G,st)$ for some element $t \in G^\ast$ which is $2$-central in the group $H:=\CB^\circ_{\GC^\ast}(s)^{F^\ast}$, see \cite[(2.1)]{MalleHZ}.
	
	If $\ZB(\GC) \neq 1$ then we let $\GC \lhd \tilde{\GC}$ a regular embedding with dual surjective morphism $\iota^\ast: \tilde{\GC}^\ast \to \GC^\ast$. Otherwise set $\tilde{\GC}:=\GC$. There exists a semisimple element $\tilde{s} \in \tilde{G}^\ast:=(\tilde{\GC}^\ast)^{F^\ast}$ of $2'$-order such that $\iota^\ast(\tilde{s})=s$ and $\tilde{t} \in \CB_{\tilde{G}^\ast}(\tilde{s})_2$ with $\iota^\ast(\tilde{t})=t$. We let $\chi \in \mathcal{E}(\tilde{G},\tilde{s} \tilde{t})$ be a character covering $\psi$. By \cite[Theorem 4.7.9]{GeckMalle}, \cite[Proposition 4.5.5]{GeckMalle} and using that $\CB_{\tilde{G}^\ast}(\tilde{s} \tilde{t})$ has only components of classical type we have $\Q(\chi) \subset \Q_{\mathrm{o}(\tilde{s} \tilde{t})}$ and so $c(\chi)_2 \leq \mathrm{o}(\tilde{t})$.
	
	%
	\smallskip
	(a1)
	Assume now first that $|\chi(1):\psi(1)|_2>1$. Then $G=E_7(q)$ and $\chi=(\psi')^{\tilde{G}}$ for some $\psi' \in \Irr(G \ZB(\tilde{G}) \mid \psi)$. In this case, $c(\chi)_2=c(\psi')_2$ by Theorem \ref{cliff} and $\Q(\chi) \subset \Q(\psi')$. Since $\psi \in \Irr(G)$ has height zero, it follows that $\psi_{\ZB(G)}$ is trivial by \cite[Lemma 8.7]{Expectional}. Hence, we can choose $\chi \in \Irr(\tilde{G} \mid \psi)$ with the additional property that $\chi$ is trivial on $\ZB(\tilde{G})$. A consequence of this choice is that $c(\psi')=c(\psi)$ and $\Q(\psi')=\Q(\psi)$.
	
	Since $\chi$ is the unique character in its $\Irr(\tilde{G}/G)$-orbit which is trivial on $\ZB(\tilde{G})$, it follows that any Galois automorphism that stabilizes the $\Irr(\tilde{G}/G)$-orbit of $\chi$ also stabilizes $\chi$.
	Hence, \cite[Theorem 4.7.9]{GeckMalle} and \cite[Proposition 4.5.5]{GeckMalle} show therefore that $\Q(\chi) \subset \Q_{\mathrm{o}(\tilde{s}) \mathrm{o}(t)}$.
	
	Recall that $\psi \in \mathcal{E}(G,st)$ has height zero and the defect group of $B$ has order $|G^\ast:\CB_{G^\ast}(s)|_2$. Assume that $e$ is an integer coprime to $\mathrm{o}(t)$ such that $t$ is $H$-conjugate to $t^e$. We claim that $t=t^e$. Arguing as in \cite[Theorem 5.9]{MalleHZ}, we see that $t$ lies in the centralizer of a Sylow $d$-torus $\mathcal{S}_d$ of $\CB^\circ_{\GC^\ast}(s)$ for $d$ the order of $q$ modulo $4$. By the proof of \cite[Corollary 2.4]{BroughRuhstorfer}, the Sylow $2$-subgroup $W_2$ of the Weyl group of $W:=\N_{H}(\mathcal{S}_d)/\CB_{H}(\mathcal{S}_d)$ is self-normalizing. Moreover, since $t$ is $2$-central its centralizer $W(t)$ in $W$ contains a Sylow $2$-subgroup $W_2$ of $W$.
	
	Since $\N_{H}(\mathcal{S}_d)$ controlls $H$-fusion in $\CB_{H}(\mathcal{S}_d)$ by \cite[Proposition 5.11]{MalleHZ} it follows that $t$ and $t^e$ are conjugate by an element $w \in \N_{W}(W(t))$. In particular, by conjugacy of Sylow subgroups in $W(t)$ we can assume that $w \in \N_W(W_2)=W_2$ and so $t=t^e$ as claimed. This implies that $\Q_{\mathrm{o}(t)} \subset \Q_{o(s)}(\psi)$ by \cite[Proposition 3.3.15]{GeckMalle} and thus $c(\psi)_2 \geq \mathrm{o}(t)_2$ by Lemma \ref{useful}. On the other hand, $$c(\chi)_2=c(\psi)_2 \geq \mathrm{o}(t)\geq c(\chi)_2,$$
	and so $\mathrm{o}(t)_2=c(\psi)_2$. Hence, $\Q_{c(\psi)_2} \subset \Q_{o(s)}(\psi)$.
	
	\smallskip (a2)
	Assume now that $|\chi(1):\psi(1)|_2=1$. In this case, $\tilde{t}$ is $2$-central in $\CB_{\tilde{G}^\ast}(\tilde{s})$. The argument from the first case now show that $\Q_{c(\chi)_2} \subset \Q_{\mathrm{o}(\tilde{t})_2} \subset \Q_{o(\tilde{s})}(\chi)$. Hence, the claim follows in this case from Theorem \ref{normal subgroup}.

	

\smallskip
(b) Suppose now that $s=1$, i.e. that $B$ is a unipotent block. We can assume that $B$ has non-maximal defect since otherwise the statement follows from \cite[Theorem A.1]{NT}. By Lemma \ref{normald} we can also assume that $B$ has non-central defect. In this case, $B$ is one of the blocks considered in \cite[Lemma 7.1]{Expectional}. In case (i) of \cite[Lemma 7.1]{Expectional}, the defect group of $B$ is dihedral, so every character in $\Irr_0(B)$ is $2$-rational by \cite[Theorem 8.1]{Sambale}. Hence, the claim holds. In case (ii), $G=E_8(q)$ and the height zero characters where explicitly described in \cite[Lemma 7.4]{Expectional}. It follows from this description that $\Irr_0(B) \subset \cup_{t}\mathcal{E}(G,t)$, where $t \in G^\ast$ runs over elements with $t^2=1$, and all height zero characters of $\Irr_0(B)$ have $\ZB(G)$ in their kernel. Now \cite[Theorem 4.7.9]{GeckMalle} and \cite[Proposition 4.5.5]{GeckMalle} show that $\Q(\chi)$ is a cyclotomic field or $\Q(\chi) \subset\Q(\sqrt{r})$ which in both cases implies the statement.
	
	%
\smallskip
(c)
An analysis of the tables in \cite{KessarMalle} shows that in the remaining casese $G=E_8(q)$ and $\CB_{\GC^\ast}(s)$ is of type $E_6 A_2$.
	Let $G(s)$ be the $F$-fixed points of the connected reductive group in duality with $\CB_{\GC^\ast}(s)$. For $t \in \CB_{G^\ast}(s)_2$ let $\psi_{G,st}: \mathcal{E}(G,st) \to \mathcal{E}(\CB_{G^\ast}(st),1)$ be Digne--Michel's unique Jordan decomposition for groups with connected center as in in \cite[Theorem 4.7.1]{GeckMalle}. Moreover, note that the centralizer of $st$ in $\CB_{\GC^\ast}(s)$ is connected. Hence, there exists a bijection $\psi_{G(s),st}: \mathcal{E}(G(s),st) \to \mathcal{E}(\CB_{G^\ast}(st),1)$ which can be uniquely determined from Digne--Michel's unique Jordan decomposition in a regular embedding of $G(s)$. We have a bijection $\mathcal{J}:\mathcal{E}_2(G,s) \to \mathcal{E}_2(G(s),s)$ which is the union of the bijections $\psi_{G(s),st}^{-1} \circ \psi_{G,st}$ with $t \in \CB_{G^\ast}(s)_2$, see \cite[Lemma 2.3]{Expectional}. By \cite[Theorem 4.7.9]{GeckMalle} and the construction of $\mathcal{J}$ it follows that $\mathcal{J}$ is $\mathrm{Gal}(\Q_{|G|}/\Q_{\mathrm{o}(s)})$-equivariant.
	Moreover, by \cite[Proposition D]{Expectional} there exists a bijection $c \mapsto b$ between blocks contained in $\mathcal{E}_2(G(s),s)$ and the blocks contained in $\mathcal{E}_2(G,s)$ such that $\mathcal{J}(\Irr_0(b))=\Irr_0(c)$. From this it follows that $\Q_{o(s)}( \mathcal{J}(\chi))=\Q_{o(s)}(\chi)$ and $c(\chi)_2=c(\mathcal{J}(\chi))_2$. Hence, it suffices to consider the height zero characters of the unipotent blocks of $G(s)$. However, by \cite[Theorem 17.7]{CE} the unipotent blocks of $G(s)$ are isomorphic in a natural way to the blocks of $G(s)_{\mathrm{sc}}$, the $F$-fixed points of the simply connected covering of $G(s)$. By what we have established about unipotent blocks, $\Q_{c(\mathcal{J}(\chi))_2} \subset \Q_{|G|_{2'}}(\mathcal{J}(\chi)_2)$. Therefore, $\Q_{c(\chi)_2} \subset \Q_{|G|_{2'}}(\psi)$ for all height zero characters $\psi$.
\end{proof}

 We finish this paper proving another  corollary of Theorem A.
 For an integer $e\ge 1$,
 let $\sigma_e$ be the Galois automorphism in ${\rm Gal}(\Q^{\rm ab}/\Q)$ fixing $2'$-roots of unity and sending $\xi$ to $\xi^{1+ 2^e}$, where $e\ge 1$.
 If $G$ is any finite group, then
 $$\mathcal G={\rm Gal}(\Q_{|G|}/\Q_{|G|_{2'}})=\langle \tau_1, \tau_2\rangle,$$
 where $\tau_i$ is the restriction of $\sigma_i$ to $\Q_{|G|}$.
 Notice that a character $\chi \in \irr G$ is 2-rational if, and only if, $\chi$ is $\mathcal G$-fixed.   
The set of 2-height zero characters fixed under the action of $\langle \sigma_1\rangle$ has been recently studied 
in connection with the number of generators of 2-defect groups (see  \cite{RSV, NRSV, V1}).

We have the following.

\begin{thm}\label{1}
Let $\chi \in \irr G$ of 2-height zero. Then $\chi$ is 2-rational if, and only if, $\chi$ is $\sigma_1$-fixed.
\end{thm}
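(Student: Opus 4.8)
The plan is to use Theorem A together with the explicit description of the Galois action $\langle\sigma_1,\sigma_2\rangle = \mathrm{Gal}(\Q_{|G|}/\Q_{|G|_{2'}})$ recalled just before the statement. One direction is trivial: if $\chi$ is $2$-rational, then $\chi$ is fixed by the whole group $\mathcal{G}$, hence in particular by $\sigma_1$. So the content is the converse: if $\chi$ has $2$-height zero and $\chi^{\sigma_1}=\chi$, then $\chi$ is $2$-rational, i.e.\ $c(\chi)_2=1$.

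For the converse I would argue by contradiction. Suppose $\chi$ is not $2$-rational, so $c(\chi)=2^a m$ with $a\ge 2$ (recall a character is either $2$-rational or has $c(\chi)_2\ge 4$). By Theorem A, $\Q_{2^a}\subseteq \Q_m(\chi)$. Now $\sigma_1$ fixes $2'$-roots of unity and sends a $2$-power root of unity $\xi$ to $\xi^{1+2}=\xi^3$; so on $\Q_{2^a}$ (with $a\ge 2$) the automorphism $\sigma_1$ acts as $\xi\mapsto\xi^3$, which is nontrivial precisely because $3\not\equiv 1\pmod{2^a}$ for $a\ge 2$. Since $\chi$ is $\sigma_1$-fixed, $\sigma_1$ fixes $\Q(\chi)$, hence fixes $\Q_m(\chi)=\langle\Q_m,\Q(\chi)\rangle$ (as $\sigma_1$ also fixes $\Q_m$, $m$ being odd). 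But then $\sigma_1$ fixes the subfield $\Q_{2^a}\subseteq\Q_m(\chi)$, contradicting the fact that $\sigma_1$ acts nontrivially on $\Q_{2^a}$ for $a\ge 2$. This contradiction forces $a\le 1$; combined with the dichotomy $a=0$ or $a\ge 2$ for characters, we get $a=0$, i.e.\ $\chi$ is $2$-rational.

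The only point requiring a little care is the interplay between $\Q_m(\chi)$ and the ambient cyclotomic field $\Q_{|G|}$: one must check that $\sigma_1$, viewed as an element of $\mathrm{Gal}(\Q^{\mathrm{ab}}/\Q)$, restricts to $\Q_m(\chi)\subseteq\Q_{|G|}$ compatibly, and that its restriction to $\Q_{2^a}$ is indeed $\xi\mapsto\xi^3$. This is immediate from the definition of $\sigma_e$ and the fact that $\Q_{2^a}\cap\Q_m=\Q$, so that $\mathrm{Gal}(\Q_{2^a m}/\Q_m)\cong\mathrm{Gal}(\Q_{2^a}/\Q)$; there is no genuine obstacle here, just bookkeeping. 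An essentially equivalent and perhaps cleaner phrasing: Theorem A gives $\Q_{2^a}\subseteq\Q_m(\chi)$, so $c(\Q_m(\chi))_2=2^a$; since $\chi$ is $\sigma_1$-fixed and $m$ is odd, $\Q_m(\chi)$ is fixed by $\sigma_1$, but a cyclotomic field with $2$-part of conductor $\ge 4$ cannot be pointwise fixed by $\sigma_1$, whence $a\le 1$ and therefore $a=0$.
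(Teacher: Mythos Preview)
Your proof is correct and follows essentially the same approach as the paper. The paper's argument is slightly more economical: rather than working with all of $\Q_{2^a}$, it simply observes that if $\chi$ is not $2$-rational then $i\in\Q_m(\chi)$ (a special case of your inclusion $\Q_{2^a}\subseteq\Q_m(\chi)$, since $a\ge 2$) and that $\sigma_1(i)=i^3=-i\neq i$; but the logic is identical.
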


\begin{proof}
 Let $m=|G|_{2'}$. Suppose that $\chi$ is $\sigma_1$-fixed.  
 Then $\Q_m(\chi)$ is also fixed by $\sigma_1$.
If $\chi$ is not 2-rational, then $i \in \Q_m(\chi)$ by Theorem A. However, $\sigma_1(i)=i^3 \ne i$, a contradiction. 
\end{proof}

Theorem \ref{1} is not true for characters which do not have height zero.
The smallest example is an irreducible character $\chi$ of degree 2 of a semidihedral group of order
16 with field of values  $\Q(\chi)=\Q(\sqrt{-2})$.

\end{document}